\documentclass{amsart}

\usepackage[english]{babel}
\usepackage[latin1]{inputenc}
\usepackage{amsmath}
\usepackage{amsthm}
\usepackage{amssymb}
\usepackage{tikz}

\usepackage{imakeidx}
\usepackage{appendix}
\usepackage{tikz-cd}
\usepackage{verbatim}
\usepackage{enumitem}
\usepackage{multicol}

\usepackage{hyperref}
\usepackage{cleveref}

\usepackage{todonotes}

\usepackage{mathtools}

\newtheorem{thm}{Theorem}[section]
\newtheorem{prop}[thm]{Proposition}

\newtheorem{lemma}[thm]{Lemma}
\newtheorem{cor}[thm]{Corollary}

\theoremstyle{definition}
\newtheorem{defin}[thm]{Definition}

\theoremstyle{remark}

\newtheorem{rmk}[thm]{Remark}
\numberwithin{equation}{section}

\newcommand{\R}{\mathbb R}
\newcommand{\C}{\mathbb C}
\newcommand{\Z}{\mathbb Z}
\newcommand{\G}{\mathbb G}

\renewcommand{\c}{\subseteq}
\renewcommand{\O}{\mathbb O}

\newcommand{\mc}[1]{\mathcal{#1}}
\newcommand{\cl}{\overline}
\newcommand{\set}[1]{\left\{#1\right\}}
\renewcommand{\phi}{\varphi}

\newcommand{\on}[1]{\operatorname{#1}}
\newcommand{\ang}[1]{\left \langle{#1}\right \rangle}

	\author{Andrew Fiori}
	\address{Department of Mathematics \& Computer Science\\
	University of Lethbridge\\
	Lethbridge, AB T1K 3M4\\Canada}
	\email{andrew.fiori@uleth.ca}
	\thanks{}

	\author{Federico Scavia}
\address{Department of Mathematics\\
	University of British Columbia\\
	Vancouver, BC V6T 1Z2\\Canada}
\email{scavia@math.ubc.ca}
\thanks{Federico Scavia was partially supported by a graduate fellowship from the University of British Columbia.}

\subjclass[2010]{Primary 20G30 (11E12 20G41)}

\keywords{Maximal tori, local-global principle, exceptional groups}

\title{Embeddings of maximal tori in groups of type $F_4$}

\begin{document}
\maketitle
\begin{abstract}
    We classify maximal tori in groups of type $F_4$ over a local or global field of characteristic different from $2$ and $3$. We prove a local-global principle for embeddings of maximal tori in groups of type $F_4$.
\end{abstract}

\section{Introduction}
Let $k$ be a field, and let $G$ be a semisimple group over $k$ (not necessarily $k$-split). It is a natural problem to classify the isomorphism classes of maximal $k$-tori of $G$. 

Assume that $G$ is a classical group, and let $(A,\tau)$ be the associated central simple algebra with involution. Then it is well known that maximal tori in $G$ are associated to certain \'etale algebras with involution $(E,\sigma)$ inside $(A,\tau)$. 
However, to find necessary and sufficient conditions for the existence of an embedding of $(E,\sigma)$ in $(A,\tau)$ can be quite difficult, especially without restrictive assumptions on $k$.

The prototypical situation is that of special orthogonal groups. This case has been studied by several authors, and has found applications to many arithmetic problems, for example to special points in symmetric spaces; see e.g. \cite{brusamarello2003}, \cite{fiori2012characterization}, \cite{gille2004type}, \cite{prasad2010local}, \cite{bayer2014embeddings}, \cite{fiori2019rational}. In the majority of these works, $k$ is assumed to be a local or global field: on the one hand, this allows to obtain clear and explicit results, and on the other, such fields are the most important in many applications.

Assume that $k$ is a global field such that $\on{char}k\neq 2$, let $(V,q)$ be a quadratic $k$-space of even dimension $2n$, and let $G=\on{SO}(V,q)$. 
Let $(E,\sigma)$ be an \'etale algebra with involution such that $\on{rank}_kE=2\on{rank}_kE^{\sigma}=2n$, and let $U(E,\sigma)$ be the $k$-torus of rank $n$ defined by
\[U(E,\sigma)(R):=\set{x\in (E\otimes_kR)^{\times}: x\sigma(x)=1}\] for every $k$-algebra $R$. 

By \cite[Proposition 1.2.1]{bayer2014embeddings}, if $T$ is maximal $k$-torus in $\on{SO}(V,q)$, there exists a unique \'etale subalgebra $E'$ of $\on{End}(V)$ of rank $2n$ that is stable under the adjoint involution $\tau_q:\on{End}(V)\to \on{End}(V)$ such that $T=U(E',\tau_q|_{E'})$; moreover $\on{rank}_kE'=2\on{rank}_k(E')^{\tau_q}=2n$; see \Cref{maxtori-etalesub} for more details. If $(E,\sigma)$ is an \'etale algebra with involution such that $\on{rank}_kE=2\on{rank}_kE^{\sigma}=2n$, we say that $(E,q)$ is \emph{realizable} if there exists a maximal torus $T$ of $\on{SO}(V,q)$ such that the associated $(E',\tau_q)$ is isomorphic to $(E,\sigma)$, and we say that $T$ is of \emph{type} $(E,\sigma)$; see \cite[\S 1.2]{bayer2014embeddings}.

The problem is then to give necessary and sufficient conditions on $(E,\sigma)$ and $(V,q)$ for the realizability of $(E,q)$. The case of local fields is easier, so it seems natural to study the local case first, and then to attack the global case by proving a local-global principle. The local-global question that must be answered is the following: if $(E_v,q_v)$ is realizable over $k_v$ for every place $v$ of $k$, is it true that $(E,q)$ is realizable over $k$?

It turns out that the answer to this question is negative, as shown by a counterexample of G. Prasad and A. Rapinchuk \cite[Example 7.5]{prasad2010local}. In \cite{fiori2012characterization}, the first author gave necessary and sufficient conditions for the everywhere local realizability of $(E,q)$, that is, he answered the question of when $(E_v,q_v)$ is realizable for every place $v$. Moreover, he proved the local-global principle in the case when $E$ is a field. The failure of the local-global principle was later completely clarified by E. Bayer-Fluckiger \cite{bayer2014embeddings}. In subsequent papers \cite{bayer2015embedding}, \cite{bayer2016embeddings}, \cite{bayer2018embeddings}, E. Bayer-Fluckiger, T.-Y. Lee and R. Parimala more generally studied embeddings of maximal tori in classical groups.

We turn to the situation for exceptional groups. If $G$ is of type $G_2$, embeddings of maximal tori in $G$ have been studied in \cite{beli2015maximal} and \cite{hooda2018embeddings}. For outer forms of type $D_4$ (in particular, triality forms), classifications and local-global principles in many cases have been established in \cite{fiori2019rational}. 

In the present work, we analyze the case when $G$ is of type $F_4$ over a global field $k$ such that $\on{char}k\neq 2,3$. In analogy with the case of orthogonal groups, we parametrize the tori which embed in some group of type $F_4$ using \'etale algebras with extra structure. More precisely, in \Cref{relation} we show that maximal tori in groups of type $F_4$ can be constructed from a \emph{datum} $\alpha=(L,E,\sigma,\beta)$, where $L$ is a cubic \'etale $k$-algebra, $(E,\sigma)$ is an \'etale $L$-algebra with involution such that $\on{rank}_LE=2\on{rank}_LE^{\sigma}=8$ and $\beta$ is an isomorphism of \'etale $L$-algebras defined in (\ref{beta-eq}). We say that the pair $(G,\alpha)$ is \emph{realizable} if there exists a maximal torus $T$ of $G$ with associated datum $\alpha$. We give a functorial description of these tori in terms of $\alpha$ which is entirely analogous to that of tori $U(E,\sigma)$; see (\ref{intrinsicF4}). 

Our main result is a proof of the local-global principle for maximal tori inside $G$. 
\begin{thm}\label{mainthm}
Let $k$ be a global field of characteristic different from $2$ and $3$, let $G$ be a $k$-group of type $F_4$, and let $\alpha$ be a datum. Then $(G,\alpha)$ is realizable if and only if $(G_v,\alpha_v)$ is realizable for every place $v$ of $k$.
\end{thm}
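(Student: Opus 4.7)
The forward direction is immediate by localization. For the converse, the strategy is to translate realizability into a Galois-cohomological condition and to apply the Hasse principle for groups of type $F_4$.

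Let $G_0$ be the split $k$-group of type $F_4$; since $F_4$ is simply connected with trivial outer automorphism group, the pointed set $H^1(k, G_0)$ classifies all $k$-isomorphism classes of groups of type $F_4$, and we write $[G] \in H^1(k, G_0)$ for the class of $G$. From the datum $\alpha$ one constructs the $k$-torus $T_\alpha$ via (\ref{intrinsicF4}); by \Cref{relation}, its $G_0(\bar k)$-conjugacy class of embeddings into $G_0$ over $\bar k$ is determined by $\alpha$. Using the short exact sequence $1 \to T_\alpha \to N_{G_0}(T_\alpha) \to W \to 1$, where $W$ is the Weyl group of $F_4$, one shows that $(G,\alpha)$ is realizable if and only if $[G]$ lies in the image of a natural map
\[
\psi\colon H^1(k, N_{G_0}(T_\alpha)) \to H^1(k, G_0),
\]
restricted to those classes whose associated datum is $\alpha$. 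Tracking the datum through the projection to $H^1(k,W)$ ensures that the $\alpha$-condition is preserved.

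The local-global question thus reduces to: if $[G]$ has localization in $\on{Im}(\psi_v)$ for every place $v$, then $[G]$ lies in $\on{Im}(\psi)$. The key input is the Hasse principle for $F_4$ over global fields: $H^1(k, G_0) \to \prod_v H^1(k_v, G_0)$ has trivial kernel (in the number field case this is due to Harder; for non-archimedean $v$ one moreover has $H^1(k_v, G_0) = 0$ by Kneser's theorem, so only real places contribute). Combined with a Poitou--Tate analysis of $T_\alpha$, whose character module is determined by $\alpha$, this should allow one to assemble the local lifts into a global class in $H^1(k, N_{G_0}(T_\alpha))$, hence a $k$-embedding of $T_\alpha$ into $G$.

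The main obstacle will be controlling the obstruction to globalizing the local data. This obstruction a priori lies in a Tate--Shafarevich group of the character module of $T_\alpha$, and in the analogous setting of special orthogonal groups it can be nonzero, producing the Prasad--Rapinchuk counterexamples. The technical core of the argument will be to exploit the rigidity coming from the triviality of the outer automorphism group of $F_4$ and the structure of $\alpha=(L,E,\sigma,\beta)$ (in particular the cubic algebra $L$ encoding the triality-like data of the root system) to force this obstruction to vanish. I expect that one can proceed by a direct computation of the character lattice of $T_\alpha$ as a Galois module and of the associated Tate--Shafarevich groups, along the lines of the arguments in \cite{fiori2019rational} for outer forms of $D_4$.
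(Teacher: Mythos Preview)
Your proposal sets up a standard cohomological framework but stops short of a proof at the decisive step. You correctly identify that the obstruction to globalizing the local embeddings lives in a Tate--Shafarevich-type set, and you correctly note that for orthogonal groups this obstruction can be nonzero. But your proposed mechanism for its vanishing---``rigidity coming from the triviality of the outer automorphism group of $F_4$''---is not substantiated, and in fact is misleading: the triviality of $\on{Out}(F_4)$ has no direct bearing on the Tate--Shafarevich group of a maximal torus, which is governed by the Galois action on the character lattice (equivalently, on the Weyl group). Your final paragraph (``I expect that one can proceed by a direct computation'') is a hope, not an argument, and the analogous computation for $D_4$ in \cite{fiori2019rational} does \emph{not} yield an unconditional local-global principle.

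The paper's argument is structurally different and never computes the Tate--Shafarevich group of $T_{\alpha}$. Instead it factors the embedding $T\hookrightarrow G$ through the unique intermediate subgroup $H$ of type $D_4$ constructed in \Cref{constr}, so that the problem splits into two pieces: (1) globalize the local twisted compositions $\Gamma_v=(L_v,M_v,q_v,\cdot_v^{*2})$ to a twisted composition $\Gamma$ over $k$ with $J(L,M)\cong J$, and (2) embed the torus in $\on{Aut}(\Gamma)^{\circ}$. Step (1) is handled by \Cref{twistedcomp-locglob} and \Cref{localglobalHG}, using the Hasse principle for $F_4$. Step (2) reduces, via the trialitarian embedding, to the local-global principle for maximal tori in the orthogonal group $\on{O}(M,q)$ over $L$. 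This is precisely where the Prasad--Rapinchuk obstruction could enter---but the quadratic space $(M,q)$ arising from a twisted composition automatically has \emph{trivial Clifford invariant} (\Cref{disc}\ref{disc2}), and the paper's new \Cref{trivialclifford} shows that under this hypothesis the local-global principle for tori in orthogonal groups does hold. That proposition, proved by a hands-on analysis of Bayer-Fluckiger's connectedness criterion, is the technical core; it is what replaces the Tate--Shafarevich computation you are gesturing at.
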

The validity of the local-global principle is quite surprising, especially because our proof relies on the results for orthogonal groups of type $D_4$, for which the local-global principle does not hold, in general. More precisely, our proof of \Cref{mainthm} uses the following new result on the local-global principle for embeddings of tori in orthogonal groups. Let $(V,q)$ be a quadratic space of even dimension over a global field $k$, let $(E,\sigma)$ be an \'etale algebra with involution over $k$, and assume that $(E_v,q_v)$ is realizable for every place $v$ of $k$. As we have said, this does not imply that $(E,q)$ is realizable. However, in \Cref{trivialclifford} we show that if $q$ has trivial Clifford invariant, then $(E,q)$ is realizable. In other words, the local-global principle for embeddings of maximal tori in orthogonal groups with trivial Clifford invariant holds. 

Using \Cref{mainthm} and a local analysis, we are able to classify maximal tori for all groups of type $F_4$ over global fields. Recall that there are exactly three real forms of type $F_4$: the split form, the anisotropic form, and the form of real rank $1$. 

If $(E,\sigma)$ is an \'etale algebra with involution such that $\on{rank}_kE=2\on{rank}_kE^{\sigma}=2n$, we say that a real place $w$ of $E^{\sigma}$ is ramified if it extends to a complex place of $E$. We denote by $\rho_v$ the number of real places of $E^{\sigma}$ above $v$ which are not ramified in $E$. We have $0\leq \rho_v\leq n$.

\begin{thm}\label{classification}
Let $k$ be a global field of characteristic different from $2$ and $3$, let $G$ be a group of type $F_4$, and let $\alpha=(L,E,\sigma,\beta)$ be a datum. Then $(G,\alpha)$ is realizable if and only if for every real place $v$ of $k$ one of the following holds:
\begin{enumerate}[label=(\roman*)]
    \item\label{classification1} $G_v$ is split and either $L_v=\R^3$ and $\rho_v$ is even, or $L_v=\R\times\C$ and $\rho_v$ is odd;
    \item\label{classification2} $G_v$ is anisotropic, $L_v=\R^3$ and $\rho_v=0$; 
    \item\label{classification3} $G_v$ has real rank $1$ and either $L_v=\R^3$ and $\rho_v$ is even, or $L_v=\R\times \C$ and $\rho_v=1$.
\end{enumerate}
\end{thm}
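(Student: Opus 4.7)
The plan is to combine \Cref{mainthm} with a detailed archimedean analysis. By the local-global principle, $(G,\alpha)$ is realizable if and only if $(G_v,\alpha_v)$ is realizable at every place $v$ of $k$, so the task reduces to characterizing local realizability. Since the stated conditions mention only real places, I need to show that at every complex and every non-archimedean place, $(G_v,\alpha_v)$ is realizable for an arbitrary datum, and then to determine precisely the set of $\alpha_v$ realized at each real place.

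At complex places there is a unique group of type $F_4$ up to isomorphism and its maximal tori of each Weyl-type are all conjugate, so the functorial construction underlying~(\ref{intrinsicF4}) produces $T_v\subset G_v$ unconditionally. At a non-archimedean place $v$, I would invoke the explicit parametrization from \Cref{relation} together with the cohomological vanishing theorems available when $\on{char}k\neq 2,3$ (in particular the vanishing of $H^1$ of a simply connected group over a non-archimedean local field) to show that any datum realizes as a maximal torus; this step uses the classification of groups of type $F_4$ over $p$-adic fields (only the split form occurs) and rules out finite-place obstructions to embedding.

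The main work is at a real place $v$. One unpacks the functorial description of $T_v$ in terms of $\alpha_v$: writing $L_v$ as $\R^3$ or $\R\times\C$ and decomposing $E_v$ accordingly, the summands of $E_v$ contribute compact $U(1)$-factors at ramified real places of $E^{\sigma}$ above $v$ and split $\G_m$-factors at unramified ones, so the split rank of $T_v$ is controlled by $\rho_v$ (after accounting for the norm constraint encoded by $\beta_v$ that cuts $R_{L_v/\R}U(E_v,\sigma_v)$ down to rank $4$). Matching the split rank of $T_v$ against the real ranks of the three real forms of $G_v$ (namely $4$, $0$, and $1$) gives the rank-bound part of conditions \ref{classification1}--\ref{classification3}; the parity conditions then emerge from a finer cohomological obstruction, controlled by the Galois cohomology of $N_{G_v}(T_v)/T_v$, which forbids, for example, a candidate torus with odd $\rho_v$ from being realized in the anisotropic or split form when $L_v=\R^3$.

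The main obstacle is this last parity analysis. A priori the embedding constraint would only yield rank inequalities, so the appearance of the parity of $\rho_v$ reflects a more subtle obstruction coming from the interaction between $\beta_v$ and the Weyl group action on $T_v$. I would verify it case-by-case using the model of $G$ as automorphisms of the exceptional Jordan algebra $H_3(\O)$ (in the split and compact cases) and of the corresponding rank-$1$ form, matching the combinatorial data $(L_v,\rho_v)$ to explicit conjugacy classes of real maximal tori in each real form. A consistency check comes from the fact that the two possible values of $L_v$ partition the admissible data in each real form in exactly the way predicted by \ref{classification1}--\ref{classification3}, which simultaneously confirms the computation and pins down which parities can occur.
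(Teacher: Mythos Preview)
Your overall structure matches the paper: reduce via \Cref{mainthm} to local realizability, dispose of the non-archimedean and complex places, and then analyze real places. The non-archimedean step is handled in the paper as \Cref{nonarchimedean} (there is a unique Albert algebra, hence a unique $F_4$, over a $p$-adic field), which is essentially what you sketch.

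The gap is in your real-place analysis. Split-rank matching gives only a necessary condition, and for the split form $G_s$ (real rank $4$) it gives no constraint whatsoever, so the parity of $\rho_v$ in \ref{classification1} cannot come from rank considerations. Your proposed ``finer cohomological obstruction via $N_{G_v}(T_v)/T_v$'' is not developed enough to see how it would yield the parity, and you yourself flag it as the main obstacle with only a plan to ``verify it case-by-case''. That is where the argument is incomplete.

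The paper avoids this difficulty by a different mechanism that you do not invoke. The structural input, already established in \Cref{constr}, is that every maximal torus of $G$ lies in a unique subgroup $H\cong\on{Aut}(\Gamma)^{\circ}$ of type $D_4$ for a twisted composition $\Gamma=(L,M,q,\cdot^{*2})$ with $J(L,M)\cong J$. Over $\R$ one first classifies the real twisted compositions and computes their Springer constructions (\Cref{realalbert}); this pins down, for each of the three real forms of $G$, exactly which pairs $(L_v,(M_v,q_v))$ can occur. One then asks which $(E_v,\sigma_v)$ are realized inside $R_{L_v/\R}(\on{O}(M_v,q_v))$, and this is the signature criterion of \Cref{toriorthogonal}\ref{toriorthogonal3}: realizability holds iff the signature of $q_v$ is of the form $(2r'+\rho_v,\,2s'+\rho_v)$. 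The parity and the exact values of $\rho_v$ in \ref{classification1}--\ref{classification3} then drop out immediately from the explicit signatures listed in \Cref{realalbert} (for instance, signature $(4,4)$ on each factor of $\on{SO}_8^3$ forces $\rho_v$ even; signature $(8,0)$ forces $\rho_v=0$; signature $(7,1)$ forces $\rho_v\le 1$). This is the content of \Cref{realcond}, and no Weyl-group cohomology enters. The missing idea in your proposal is precisely this reduction through the $D_4$ subgroup to an orthogonal-group signature question.
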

It is not difficult to describe the possible isomorphism classes of maximal tori in $G_v$, for every real place $v$; see \Cref{realtori}.

We conclude the paper with a brief discussion of the topic of rational conjugacy classes of  tori in groups of type $F_4$; this can be found in \Cref{sec:conjclasses}.

\section{Construction of $H$ - the split case}\label{constructionH}
Our main references for octonion algebras, twisted compositions and Albert algebras are \cite{springer2000octonions} and \cite{knus1998involutions}.

Let $k$ be a field. We assume that $\on{char}k\neq 2,3$. We denote by $\mathbb{O}$ the split octonion algebra over $k$. As a $k$-vector space, $\mathbb{O}=M_2(k)^{\oplus 2}$. If
\[A=\begin{pmatrix} a_{11} & a_{12} \\ a_{21} & a_{22}\end{pmatrix}\in M_2(k)\]
we define 
\[\cl{A}=\begin{pmatrix} a_{22} & -a_{12} \\ -a_{21} & a_{11},\end{pmatrix}\]
so that $A\cl{A}=\cl{A}A=\det(A)\cdot I$. The product on $\mathbb{O}$ is defined as \[(A,B)(C,D):=(AC+\cl{D}B,DA+B\cl{C}).\]
The norm on $\mathbb{O}$ is defined as $n(A,B)=\on{det}(A)-\on{det}(B)$. We denote by $\ang{\cdot,\cdot}$  the associated inner product on $\mathbb{O}$.

Let $G_s$ be the split group of type $F_4$. We have $G_s=\on{Aut}(J_s)$, where $J_s\c M_3(\O)$ is the $k$-split Albert algebra:
\[J_s:=\set{\begin{pmatrix} \xi_1 & c_3 & \cl{c_2} \\ \cl{c_3} & \xi_2 & c_1 \\ c_2 & \cl{c_1} & \xi_3 \end{pmatrix}: \xi_i\in k, c_i\in \O}.\]
We denote by $\cdot$ the matrix product in $M_3(\O)$, and by juxtaposition the product in $J_s$, that is \[xy=\frac{1}{2}(x\cdot y+y\cdot x)\] for every $x,y\in J_s$.

Denote by $Q$ the norm of $J_s$; see \cite[(5.3)]{springer2000octonions}. It is a non-degenerate quadratic form on $J_s$. Let $\ang{\cdot,\cdot}$ be the bilinear form associated to $Q$, called the inner product of $J_s$. We denote by $e$ the identity element of $J_s$. Let $\on{det}$ be the determinant form of $J_s$, as defined in \cite[(5.7)]{springer2000octonions}: it is a cubic form on $J_s$. Denote by $\ang{\cdot,\cdot,\cdot}$ the trilinear form associated to $\on{det}$. For every $x,y\in J_s$, define the cross product $x\times y$ as the unique element of $J_s$ such that 
\[\ang{x\times y,z}=3\ang{x,y,z}\]
for every $z\in J_s$; see \cite[\S 5.2]{springer2000octonions}.

We set
\[L_s:=\set{\begin{pmatrix} \xi_1 & 0 & 0 \\ 0 & \xi_2 & 0 \\ 0 & 0 & \xi_3 \end{pmatrix}: \xi_i\in k}\] 
and
\[M_s:=\set{\begin{pmatrix} 0 & c_3 & \cl{c_2} \\ \cl{c_3} & 0 & c_1 \\ c_2 & \cl{c_1} & 0 \end{pmatrix}: c_i \in \O}.\]
It is clear that $L_s$ is a subalgebra of $J_s$. We will give $M_s$ the structure of an $L_s$-module; see \Cref{formulas}.
We have $M_s=L_s^{\perp}$. An element of $M_s$ may be written as a triple $(c_1,c_2,c_3)$, where $c_i\in\mathbb{O}$. This yields a direct sum decomposition $M_s=J_1\oplus J_2\oplus J_3$, where $J_i\cong \O$ and $c_i\in J_i$. Thus \[J_s=L_s\oplus M_s=L_s\oplus J_1\oplus J_2\oplus J_3.\]
It follows immediately from the definition of $Q$ that the restriction of $Q$ to the $J_i$ is the octonion norm.

The subgroup of $G_s$ fixing every element of $L_s$ is isomorphic to $\on{Spin}_8$, and the subgroup of automorphisms preserving $L_s$ is $\on{Spin}_8\rtimes S_3$; see \cite[\S 39.19]{knus1998involutions}. We denote by $H_s\cong \on{Spin}_8$ its connected component. By \cite[\S 35.8]{knus1998involutions}, for every $k$-algebra $R$, we have \[H_s(R)=\set{(g_1,g_2,g_3)\in \on{SO}_8^3(R): g_1(x\cdot y)=\cl{g_3(\cl{x})}\cdot\cl{g_2(\cl{x})} \text{ for all }x,y\in \O\otimes_kR}.\]

We now construct a $k$-split maximal torus $T_s$ of $G_s$, following the exposition of \cite{macdonald2014essential}. 

For every $\lambda\in k^{\times}$, let $R_{\lambda}$ and $L_\lambda$ be the linear automorphisms of $\O$ given by right and left multiplication by
\[\begin{pmatrix} \lambda & 0 \\ 0 & \lambda^{-1}\end{pmatrix},\] respectively. Consider the following elements of $\on{SO}_8^3(k)$:
\[s_\lambda:=(R_\lambda,L_{\lambda^{-1}}\circ R_{\lambda^{-1}},L_\lambda), \qquad r_{\lambda}:=(L_\lambda,R_\lambda,L_{\lambda^{-1}}\circ R_{\lambda^{-1}}).\]
One can check that $r_{\lambda},s_{\lambda}\in \on{Spin}_8(R)$, and that the collection $\set{r_{\lambda},s_{\lambda}}_{\lambda\in k^{\times}}$ generates a $2$-dimensional split torus $T_1$; see \cite[1.6]{macdonald2014essential}.\footnote{More precisely, one should define $r_{\lambda}$ and $s_{\lambda}$ for every $\lambda\in R^{\times}$ and every $k$-algebra $R$, and then check that this defines a $k$-group homomorphism $\phi:\G_m^2\to G$. We let $T_1$ be the image of $\phi$.}

The group $\on{Spin}_8$ has a $k$-split subgroup of type $G_2$. Its $k$-points may be described as the triples $(g_1,g_2,g_3)\in \on{SO}_8^3(k)$ such that $g_i(1)=1$; see \cite[\S 35.16]{knus1998involutions}. It is clear from this description that $G_2$ and $T_1$ intersect trivially. The group $G_2$ contains a subgroup isomorphic to $\on{SL}_3$; it is the stabilizer of \[i:=(\begin{pmatrix} 1 & 0 \\ 0 & -1\end{pmatrix},0),\] in the $7$-dimensional representation of $G_2$ (trace-zero octonions); see \cite[\S 36, Exercise 6]{knus1998involutions} or \cite[\S 2.2]{springer2000octonions}. Denote by $T_2$ the maximal torus inside $\on{SL}_3$ constructed in \cite[\S 2.3]{springer2000octonions}. The tori $T_1$ and $G_2$ intersect trivially and commute; see \cite[1.7]{macdonald2014essential}. We define $T_s$ as the torus generated by $T_1$ and $T_2$: it is a $k$-split maximal torus of $G_s$. The torus $T_s$ acts trivially on $L_s$, and its action on $M_s=L_s^{\perp}$ has no non-zero fixed points.

We now want to define on $(L_s,M_s)$ a structure of a split twisted composition over $k$; see \cite[\S 36]{knus1998involutions} for the definition. In the next paragraph we will carry out a similar construction for an arbitrary Albert algebra $J$. For this reason, it is useful to give an intrinsic definition first, and only later write it in coordinates.

Using multiplication in $J_s$ and projections, one may construct $k$-bilinear maps
\begin{equation}\label{maps}\mu_{L_s}:L_s\times L_s\to L_s,\qquad \mu_{M_s}:M_s\times M_s\to M_s.\end{equation}
For example, $\mu_{L_s}$ is the composition
\[L_s\times L_s\hookrightarrow J_s\times J_s\to J_s\xrightarrow{\pi_s} L_s\] where the second map is multiplication in $J_s$ and $\pi_s$ is projection onto $L_s$; a similar construction gives $\mu_{M_s}$.
There is also a $k$-bilinear map
\begin{align}\eta_s:L_s\times M_s&\to M_s\\
(\xi,c)&\mapsto -2\xi c+\ang{\xi,e}c.\nonumber
\end{align}
One could also define $\eta_s$ using the cross product on $J_s$; see \cite[p. 162]{springer2000octonions}. That $\eta_s$ really takes values in $M_s$ will be shown in \Cref{formulas}.

Finally, we have a $k$-bilinear map 
\[\phi_s:M_s\times M_s\to L_s,\] given by the composition
\[M_s\times M_s\hookrightarrow J_s\times J_s\xrightarrow{\times} J_s\xrightarrow{\pi_s} L_s.\]
We define $q_s:=-2\phi_s$. If $\xi\in L_s$ and $c\in M_s$, we express them in coordinates as $\xi=(\xi_1,\xi_2,\xi_3)$ and $c=(c_1,c_2,c_3)$. 

\begin{lemma}\label{formulas}
Let $\xi,\xi'\in L_s$ and $c,c'\in M_s$. The following identities hold:
\begin{align}\mu_{L_s}(\xi,\xi')=(\xi_1,\xi_2,\xi_3)(\xi'_1,\xi'_2,\xi'_3)=(\xi_1\xi_1',\xi_2\xi_2',\xi_3\xi_3')\in L_s.\label{formulas1}\\
\eta_s(\xi,c)=-2\xi c+\ang{\xi,e}c=(\xi_1c_1,\xi_2c_2,\xi_3c_3)\in M_s.\label{formulas2}\\
q_s(c,c')=(\ang{c_1,c'_1},\ang{c_2,c'_2},\ang{c_3,c'_3})\in L_s.\label{formulas3}\\
\mu_{M_s}(c,c)=(\cl{c_2c_3}, \cl{c_3c_1},\cl{c_1c_2})\in M_s.\label{formulas4}
\end{align}
\end{lemma}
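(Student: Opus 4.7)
The plan is to verify each identity by a direct computation in the matrix representation of $J_s$, using (a) the entrywise matrix product $(x\cdot y)_{ij} = \sum_k x_{ik}y_{kj}$ on $M_3(\O)$ (well-defined despite non-associativity, since each term involves only one pair of octonions); (b) the Jordan product $xy = \tfrac{1}{2}(x\cdot y + y\cdot x)$; (c) the octonion identities $a\cl{a} = n(a)$ and $\cl{ab} = \cl{b}\,\cl{a}$; and (d) the inner product on $J_s$ derived from the norm $Q$ of \cite[(5.3)]{springer2000octonions}, which gives $\ang{\xi,e} = \xi_1+\xi_2+\xi_3$ for $\xi \in L_s$.

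Three of the four identities then follow at once. For (\ref{formulas1}), two scalar diagonal matrices commute under $\cdot$, so $\mu_{L_s}(\xi,\xi') = \xi \cdot \xi' = \on{diag}(\xi_1\xi_1',\xi_2\xi_2',\xi_3\xi_3')$, which is already in $L_s$. For (\ref{formulas2}), since each $\xi_i \in k$ is central in $\O$, the $(i,j)$-entry of $\tfrac{1}{2}(\xi\cdot c + c\cdot \xi)$ equals $\tfrac{1}{2}(\xi_i+\xi_j)$ times the $(i,j)$-entry of $c$; in particular $\xi c \in M_s$ (so $\eta_s$ maps into $M_s$, as asserted), and combining with (d) yields the stated formula directly. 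For (\ref{formulas4}), expanding $c\cdot c$ entrywise produces diagonal entries of the form $n(c_j)+n(c_k)$ (killed by projection onto $M_s$) and off-diagonal entries such as $(c\cdot c)_{1,2} = \cl{c_2}\,\cl{c_1} = \cl{c_1c_2}$, with cyclic analogues at the other positions; since $c\cdot c$ is already symmetric, no further symmetrization is needed.

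The main obstacle is (\ref{formulas3}), because $\phi_s$ involves the cross product $\times$, which is defined intrinsically through the cubic form $\on{det}$. I would handle it via the Freudenthal adjoint $\#$ on $J_s$ (see \cite[Ch.~5]{springer2000octonions}): $\times$ is the polarization of $x \mapsto x^{\#}$, and for a Hermitian matrix $x \in J_s$ the adjoint is computed by the usual $3\times 3$ adjugate formula. In particular, for $c \in M_s$ the diagonal of $c^{\#}$ is $(-n(c_1), -n(c_2), -n(c_3))$, so $\pi_s(c^{\#}) = -(n(c_1), n(c_2), n(c_3))$. Polarizing via $c\times c' = \tfrac{1}{2}((c+c')^{\#} - c^{\#} - (c')^{\#})$ and using $n(c_i+c_i') - n(c_i) - n(c_i') = \ang{c_i, c_i'}$ then gives $\pi_s(c\times c') = -\tfrac{1}{2}(\ang{c_1,c_1'}, \ang{c_2,c_2'}, \ang{c_3,c_3'})$; multiplying by $-2$ to pass from $\phi_s$ to $q_s$ yields (\ref{formulas3}).
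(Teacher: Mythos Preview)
Your argument is correct and, for (\ref{formulas1}), (\ref{formulas2}) and (\ref{formulas4}), is essentially the same direct matrix computation the paper carries out (the paper writes out $\xi\cdot c$, $c\cdot\xi$ and $c\cdot c'$ explicitly rather than reasoning entrywise, but the content is identical).

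For (\ref{formulas3}) you take a genuinely different route. The paper does not go through the Freudenthal adjoint; instead it invokes the closed formula
\[
x\times y=xy-\tfrac{1}{2}\ang{x,e}y-\tfrac{1}{2}\ang{y,e}x-\tfrac{1}{2}\ang{x,y}e+\tfrac{1}{2}\ang{x,e}\ang{y,e}e
\]
from \cite[Lemma 5.2.1(i)]{springer2000octonions}. For $c,c'\in M_s$ the two middle terms and the last term vanish (since $\ang{c,e}=\ang{c',e}=0$), so $c\times c'=cc'-\tfrac{1}{2}\ang{c,c'}e$, and the diagonal part is then read off from the same matrix product (\ref{construction-eq}) already computed for (\ref{formulas4}). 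Your adjugate-and-polarize approach is arguably cleaner for this single identity, since the diagonal of $c^{\#}$ drops out immediately from the $2\times 2$ minors; the paper's approach has the minor economy that the matrix product $c\cdot c'$ is computed once and reused for both (\ref{formulas3}) and (\ref{formulas4}). The one point you should make explicit is the normalization $x\times x=x^{\#}$ (equivalently, your polarization formula with the factor $\tfrac{1}{2}$), which in the Springer--Veldkamp conventions does hold; without it the factor of $-2$ passing from $\phi_s$ to $q_s$ would not line up.
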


\begin{proof}
The verification of (\ref{formulas1}) is immediate.

We have
\[\begin{pmatrix} \xi_1 & 0 & 0 \\ 0 & \xi_2 & 0 \\ 0 & 0 & \xi_3 \end{pmatrix}\cdot\begin{pmatrix} 0 & c_3 & \cl{c_2} \\ \cl{c_3} & 0 & c_1 \\ c_2 & \cl{c_1} & 0 \end{pmatrix}=\begin{pmatrix} 0 & \xi_1c_3 & \xi_1\cl{c_2} \\ \xi_2\cl{c_3} & 0 & \xi_2c_1 \\ \xi_3c_2 & \xi_3\cl{c_1} & 0 \end{pmatrix}\]
and
\[\begin{pmatrix} 0 & c_3 & \cl{c_2} \\ \cl{c_3} & 0 & c_1 \\ c_2 & \cl{c_1} & 0 \end{pmatrix}\cdot \begin{pmatrix} \xi_1 & 0 & 0 \\ 0 & \xi_2 & 0 \\ 0 & 0 & \xi_3 \end{pmatrix}=\begin{pmatrix} 0 & \xi_2c_3 & \xi_3\cl{c_2} \\ \xi_1\cl{c_3} & 0 & \xi_3c_1 \\ \xi_1c_2 & \xi_2\cl{c_1} & 0 \end{pmatrix}.\]
Hence
\begin{equation}\label{construction-eq2}\xi c=\frac{1}{2}((\xi_2+\xi_3)c_1, (\xi_1+\xi_3)c_2, (\xi_1+\xi_2)c_3)\in M_s.\end{equation}
Using that $\ang{\xi,e}=\xi_1+\xi_2+\xi_3$, one concludes that \[\eta_s(\xi,c)=-2\xi c+\ang{\xi,e}c=(\xi_1c_1,\xi_2c_2,\xi_3c_3)\in M_s,\] proving (\ref{formulas2}).

By \cite[Lemma 5.2.1(i)]{springer2000octonions} we have \[x\times y=xy-\frac{1}{2}\ang{x,e}y-\frac{1}{2}\ang{y,e}x-\frac{1}{2}\ang{x,y}e+\frac{1}{2}\ang{x,e}\ang{y,e}e\] for every $x,y\in J$. Using this formula together with \begin{equation}\label{construction-eq}\begin{pmatrix} 0 & c_3 & \cl{c_2} \\ \cl{c_3} & 0 & c_1 \\ c_2 & \cl{c_1} & 0 \end{pmatrix}\cdot
\begin{pmatrix} 0 & c_3' & \cl{c_2}' \\ \cl{c_3}' & 0 & c'_1 \\ c'_2 & \cl{c_1}' & 0 \end{pmatrix}=
\begin{pmatrix} c_3\cl{c_3}'+\cl{c_2}c'_2 & \cl{c_2}\cl{c_1}' & c_3c'_1 \\ c_1c'_2 & \cl{c_3}c'_3+c_1\cl{c_1}' & \cl{c_3}\cl{c_2}' \\ \cl{c_1}\cl{c_3}' & c_2c'_3 & c_2\cl{c_2}'+\cl{c_1}c'_1 \end{pmatrix}.\end{equation} one can verify without difficulty that \[\phi(c,c')=-\frac{1}{2}(\ang{c_1,c'_1},\ang{c_2,c'_2},\ang{c_3,c'_3});\]
 see \cite[p. 163]{springer2000octonions}. This proves (\ref{formulas3}).

From (\ref{construction-eq}), we have that \[\mu_{M_s}((c_1,c_2,c_3),(c'_1,c'_2,c'_3))=\frac{1}{2}(\cl{c_3}\cl{c_2}'+\cl{c_3}'\cl{c_2},\cl{c_1}\cl{c_3}'+\cl{c_1}'\cl{c_3},\cl{c_2}\cl{c_1}'+\cl{c_2}'\cl{c_1}).\] It follows that \[\mu_{M_s}(c,c)=(\cl{c_2c_3}, \cl{c_3c_1},\cl{c_1c_2}).\qedhere\] 
\end{proof}

\begin{prop}\label{splitcomp}
Let $x^{*2}:=\mu_{M_s}(x,x)$. Together with the maps previosly defined, the quadruple $(L_s,M_s,q,\cdot^{*2})$ is a split twisted composition.
\end{prop}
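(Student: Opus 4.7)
The plan is to reduce the proposition to a direct match with the model split twisted composition described in \cite[\S 36]{knus1998involutions}, using \Cref{formulas} as the bridge. Recall that a split twisted composition over $k$ consists of $L=k^3$ (with componentwise operations), a free $L$-module $M=\O^3$ with componentwise scalar action, an $L$-valued quadratic form $Q$ given by the octonion norm in each slot, and a quadratic map $\beta\colon M\to M$ given explicitly by $\beta(c_1,c_2,c_3)=(\cl{c_2c_3},\cl{c_3c_1},\cl{c_1c_2})$, subject to the axioms (i)~$\beta(\ell x)=\ell^{\#}\beta(x)$ for $\ell\in L$, (ii)~$Q(\beta(x))=Q(x)^{\#}$, and (iii)~the Springer/trace identity relating $b_Q(x,\beta(x))$ to $T_{L/k}$.

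My plan is to use \Cref{formulas} to read off all of these data on $(L_s,M_s,q_s,\cdot^{*2})$ in one stroke. By (\ref{formulas1}), $L_s\cong k^3$ as a $k$-algebra with componentwise multiplication. By (\ref{formulas2}), the operation $\eta_s$ defines an $L_s$-module structure on $M_s$ coinciding with the componentwise action on $\O^3$; in particular $M_s$ is free of rank $8$ over $L_s$. By (\ref{formulas3}), $q_s$ is the diagonal octonion norm, and by (\ref{formulas4}), $\cdot^{*2}$ is exactly the map $\beta$ above. Thus the four pieces of structure agree on the nose with the model split twisted composition, and it only remains to verify the axioms (i)--(iii) in these coordinates.

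For (i), write $\ell=(\xi_1,\xi_2,\xi_3)\in L_s$, so $\ell^{\#}=(\xi_2\xi_3,\xi_1\xi_3,\xi_1\xi_2)$; since the $\xi_i$ are scalars in $k$ they commute with octonion multiplication, and a direct substitution into (\ref{formulas4}) gives $(\ell x)^{*2}=\ell^{\#}x^{*2}$. For (ii), the multiplicativity of the octonion norm and the identity $n(\cl{c})=n(c)$ yield $n(\cl{c_ic_j})=n(c_i)n(c_j)$ in each slot, which is precisely $Q(x)^{\#}$. For (iii), the analogous identity $b_Q(x,x^{*2})=\bigl(\ang{c_1,\cl{c_2c_3}},\ang{c_2,\cl{c_3c_1}},\ang{c_3,\cl{c_1c_2}}\bigr)$ reduces to a standard symmetric octonion identity relating $\ang{c_i,\cl{c_jc_k}}$ to the trilinear form on $\O$, which is recorded in \cite[\S 1]{springer2000octonions}.

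The only real obstacle is bookkeeping: making sure that the sign conventions in $q_s=-2\phi_s$ and the factor of $\tfrac12$ in the symmetrized product on $J_s$ align with the conventions of \cite[\S 36]{knus1998involutions}, and that the adjoint $\ell^{\#}$ and the norm $N_{L_s/k}$ are normalized consistently on both sides. Once those normalizations are pinned down, the verification is mechanical and no new ideas beyond \Cref{formulas} are required.
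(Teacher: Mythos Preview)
Your proposal is correct and follows essentially the same route as the paper: both use \Cref{formulas} to identify $(L_s,M_s,q_s,\cdot^{*2})$ with the explicit split model on $k^3\times\O^3$, and then verify the twisted-composition axioms by direct octonion computations. The only cosmetic difference is that the paper states the axioms in the ``norm'' form $q_s(c)q_s(c^{*2})=N_{L_s/k}(q_s(c))$ and $\xi(\xi c)^{*2}=N_{L_s/k}(\xi)c^{*2}$, whereas you use the equivalent ``adjoint'' form $(\ell x)^{*2}=\ell^{\#}x^{*2}$ and $Q(x^{*2})=Q(x)^{\#}$; your extra axiom (iii) is not checked in the paper, since it is not part of the definition the paper takes from \cite[\S 36]{knus1998involutions}.
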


\begin{proof}
This is a consequence of \Cref{formulas}. By (\ref{formulas1}), we have an isomorphism $L_s\cong k^3$ of $k$-algebras. By (\ref{formulas2}), the map $\eta_s$ makes $M_s$ into an $L_s$-module, and we have $\on{dim}_{L_s}M_s=8$. By (\ref{formulas3}), $q_s$ is a non-degenerate symmetric $L_s$-bilinear form. The associated quadratic form is \begin{equation}\label{construction-eq5}q_s(c)=(N(c_1),N(c_2),N(c_3)),\end{equation} where $N:\O\to k$ is the octonion norm. To prove that $(L_s,M_s,q,\cdot^{*2})$ is a split composition, it remains to show that
\begin{equation}\label{construction-eq3} q_s(c)q_s(c^{*2})=N_{L_s/k}(q_s(c))\end{equation}
and \begin{equation}\label{construction-eq4}
    \xi(\xi c)^{*2}=N_{L_s/k}(\xi)c^{*2}
\end{equation} for every $\xi\in L_s, c\in M_s$. Here $N_{L_s/k}(\xi)=\xi_1\xi_2\xi_3\in k$.

We have $N(\cl{x})=N(x)$ and $N(xy)=N(x)N(y)$ for every $x,y\in \O$, hence by (\ref{construction-eq5})
\[q_s(c^{*2})=(N(c_2)N(c_3),N(c_1)N(c_3),N(c_1)N(c_2)).\] We deduce that
\[q_s(c)q_s(c^{*2})=N(c_1)N(c_2)N(c_3)=N_{L_s/k}(q_s(c)),\] which proves (\ref{construction-eq3}). The proof of (\ref{construction-eq4}) is similar.
\end{proof}

\begin{lemma}\label{automorphism}
    The groups $H_s$ and $T_s$ act on $J_s$ via automorphisms of the twisted composition $(L_s,M_s,q_s,\cdot^{*2})$.
\end{lemma}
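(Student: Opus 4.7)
The plan is to reduce the claim to the single case of $H_s$ by first observing that $T_s \subseteq H_s$. Indeed, the generators $r_\lambda, s_\lambda$ of $T_1$ are explicitly placed inside $\on{Spin}_8 = H_s$, and $T_2$ lies in the $G_2$-subgroup of $\on{Spin}_8$; hence the subgroup $T_s$ generated by $T_1$ and $T_2$ is contained in $H_s$. It therefore suffices to show that every $h \in H_s$ acts on $J_s$ via an automorphism of the twisted composition $(L_s, M_s, q_s, \cdot^{*2})$.

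By definition, $H_s \subseteq G_s = \on{Aut}(J_s)$ fixes every element of $L_s$. Since every element of $\on{Aut}(J_s)$ preserves the norm $Q$, it preserves the inner product $\ang{\cdot,\cdot}$ as well; consequently any $h \in H_s$ also preserves $M_s = L_s^\perp$, and so preserves the decomposition $J_s = L_s \oplus M_s$ and commutes with the projection $\pi_s$ onto $L_s$ and with the projection onto $M_s$. Taking $f = \on{id}_{L_s}$ (which is trivially a $k$-algebra automorphism of $L_s$) and $g = h|_{M_s}$, one must check that the pair $(f, g)$ respects $\eta_s$, $q_s$, and $\cdot^{*2}$.

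The key observation is that each structure map is invariantly built from $\on{Aut}(J_s)$-invariant data on $J_s$: the Jordan product, the inner product $\ang{\cdot,\cdot}$, the identity $e$, and the cross product $\times$ (determined by the trilinear form $\ang{\cdot,\cdot,\cdot}$, the polar of the $G_s$-invariant cubic form $\on{det}$), followed by the projections onto the $h$-stable summands $L_s$ and $M_s$. For instance, since $h(\xi) = \xi$ for $\xi \in L_s$ and $h(e) = e$, one has
\[h(\eta_s(\xi, c)) = -2\,h(\xi c) + \ang{\xi, e}\,h(c) = -2\,\xi h(c) + \ang{\xi, e}\,h(c) = \eta_s(\xi, h(c)).\]
The identities $h(\phi_s(c, c')) = \phi_s(h(c), h(c'))$ and $h(\mu_{M_s}(c, c')) = \mu_{M_s}(h(c), h(c'))$ follow in the same way from the fact that $h$ commutes with both $\times$ and the Jordan product and preserves the two projections; hence $q_s = -2\phi_s$ and $c^{*2} = \mu_{M_s}(c,c)$ are respected as well.

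No genuine obstacle arises: the only point to be alert to is the seemingly small one that $H_s$, being defined as fixing $L_s$ pointwise, must also stabilize $M_s$; this is where one uses that automorphisms of the Albert algebra preserve the polar form of $Q$, which is what lets $M_s = L_s^\perp$ make sense as an $H_s$-invariant complement. Once that is in hand, the proof is a direct verification that each ingredient of the twisted composition on $(L_s, M_s)$ is intrinsically encoded in the $\on{Aut}(J_s)$-invariant structure on $J_s$.
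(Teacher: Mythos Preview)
Your proof is correct and follows essentially the same approach as the paper's: reduce to $H_s$ via $T_s \subseteq H_s$, observe that $H_s$ fixes $L_s$ pointwise and hence preserves $M_s = L_s^{\perp}$, and then verify that each structure map is preserved because it is built from $\on{Aut}(J_s)$-invariant data (Jordan product, inner product, cross product) followed by projection onto the $H_s$-stable summands. The only cosmetic difference is that the paper handles $q_s$ via the explicit description of $H_s$ as triples in $\on{SO}_8^3$ preserving the octonion norm on each $J_i$, rather than through invariance of the cross product as you do.
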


\begin{proof}
Since $T_s$ is contained in $H_s$, it is enough to prove the claim for $H_s$. By definition, $H_s$ fixes $L_s$ pointwise, the subspaces $J_1,J_2,J_3$ are $H_s$-stable, and the $H_s$-action on each $J_i\cong \O$ respects the octonion norm, so
we only need to show that for every $h\in H_s(\cl{k})$ and every $m\in M_s(\cl{k})$ we have $h(m^{*2})=h(m)^{*2}$. Let $m\in M_s(\cl{k})$ and $h\in H_s(\cl{k})$, and write \[m^2=l+n,\qquad (hm)^2=l'+n',\] where $l,l'\in L_s(\cl{k})$ and $n,n'\in M_s(\cl{k})$. By definition, we have \[m^{*2}=n,\qquad h(m^{*2})=h(n),\qquad (hm)^{*2}=n'.\] Since $H_s$ acts on $J_s$ via Albert algebra automorphisms, we have $h(m^2)=(hm)^2$. Since $J_s=L_s\oplus M_s$, we deduce that $h(n)=n'$. This means that $h(m^{*2})=(hm)^{*2}$, as desired.
\end{proof}

We have constructed a diagram $T_s\hookrightarrow H_s\hookrightarrow G_s$. In the next section, we will generalize this to maximal tori in arbitrary groups of type $F_4$.

\section{Construction of $H$ - the general case}\label{Hconstr}
Let $J$ be an Albert algebra over $k$. Similarly to the split case, we denote by $Q$ the norm of $J$; it is a non-degenerate quadratic form on $J$. We let $\ang{\cdot,\cdot}$ be the bilinear form associated to $Q$, called the inner product of $J$, and we denote by $e$ the identity element of $J$.

We let $\on{det}$ be the determinant form of $J$, and we denote by $\ang{\cdot,\cdot,\cdot}$ the trilinear form associated to $\on{det}$. For every $x,y\in J$, we define the cross product $x\times y$ as the unique element of $J$ such that 
\[\ang{x\times y,z}=3\ang{x,y,z}\]
for every $z\in J$.

Let $G$ be a group of type $F_4$ over $k$, and let $J$ be the Albert algebra associated to $G$, so that $G=\on{Aut}(J)$. By a Galois descent argument, every $k$-group of type $F_4$ arises in this way; see \cite[Proposition 37.11]{knus1998involutions}.

 Let $T$ be a maximal $k$-torus of $G$ (not necessarily $k$-split). Since $G_{\cl{k}}$ is split, we have a constructed in \Cref{constructionH} a maximal torus $T_s$ and a group $H_s\cong \on{Spin}_8$ such that $T_s\hookrightarrow H_s\hookrightarrow G_{\cl{k}}$. There exists $g\in G(\cl{k})$ such that $gT_{\cl{k}}g^{-1}=T_s$. The torus $T$ acts on $J$ via $G$. If we let $L$ be the $T$-fixed subspace of $J$, then $gL_{\cl{k}}g^{-1}=L_s$, hence $L$ is an \'etale algebra of degree $3$ over $k$. If we set $M:=L^{\perp}$, we have $J=L\perp M$ and $gM_{\cl{k}}g^{-1}=M_s$, hence the $T$-action on $M$ has no non-zero fixed points.

We now define $k$-bilinear maps $\mu_L$, $\mu_M$, $\eta$, $q$, mimicking the construction of $\mu_{L_s}$, $\mu_{M_s}$, $\eta_s$ and $q_s$. Using multiplication in $J$ and projections, we construct
\begin{equation}\label{maps2}\mu_L:L\times L\to L,\qquad \mu_M:M\times M\to M.\end{equation}
We define
\begin{align}\eta:L\times M&\to M\\
(l,m)&\mapsto -2lm+\ang{l,e}m.\nonumber
\end{align}
As in the split case, one could also define $\eta$ using the cross product $\times$ on $J$; see \cite[p. 162]{springer2000octonions}. 

Finally, we have a $k$-bilinear map 
\[\phi:M\times M\to L,\] given by the composition
\[M\times M\hookrightarrow J\times J\xrightarrow{\times} J\xrightarrow{\pi} L,\] where $\pi$ denotes the projection onto $L$. We define $q:=-2\phi$.

The decomposition $J=L\perp M$ is known as the \emph{Springer decomposition}; see \cite[38.A]{knus1998involutions}.

\begin{prop}\label{constr}
Let $G$ be a group of type $F_4$, not necessarily $k$-split, and let $T$ be a maximal torus of $G$. Define $x^{*2}:=\mu_M(x,x)$.
\begin{enumerate}[label=(\alph*)]
    \item\label{constr1} Together with the previously defined maps, $\Gamma=(L,M,q,\cdot^{*2})$ is a twisted composition algebra over $k$, and $J=L\perp M$ is a Springer decomposition.
    \item\label{constr2} The $T$-action on $J$ induces an action on $M$ via automorphisms of the twisted composition.
    \item\label{constr3} The group $\on{Aut}(\Gamma)^{\circ}$ is simply connected of type $D_4$, it canonically embeds in $G$, and its image $H$ is the unique subgroup of $G$ of type $D_4$ containing $T$.
    \item\label{constr4} Every subgroup of $G$ of type $D_4$ is uniquely of the form $\on{Aut}(\Gamma')^{\circ}$, for some twisted composition $\Gamma'=(L',M',q',\cdot^{*2})$ such that $J=L'\perp M'$ is a Springer decomposition.
\end{enumerate} 
\end{prop}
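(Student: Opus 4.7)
My plan is to reduce parts \ref{constr1}--\ref{constr4} to the split case treated in Section \ref{constructionH} by Galois descent. Since $G_{\cl{k}}$ is split and all its maximal tori are $G(\cl{k})$-conjugate, I will pick $g \in G(\cl{k})$ with $gT_{\cl{k}}g^{-1} = T_s$. Because $L = J^T$ and $M = L^{\perp}$ are defined intrinsically from $T$ and the inner product, conjugation by $g$ sends $L_{\cl{k}}$ to $L_s$ and $M_{\cl{k}}$ to $M_s$. Since $g$ acts via Albert algebra automorphisms, it preserves the $J$-product, the cross product $\times$, the projections onto the $T$-fixed subspace, and the inner product; so it transports the tuple $(\mu_L,\mu_M,\eta,q,\cdot^{*2})$ to $(\mu_{L_s},\mu_{M_s},\eta_s,q_s,\cdot^{*2})$, yielding an isomorphism $\Gamma_{\cl{k}} \cong \Gamma_s$ of quadruples equipped with these operations.

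For \ref{constr1}, the twisted composition axioms are polynomial identities in the structure maps; by \Cref{splitcomp} they hold for $\Gamma_s$, hence for $\Gamma_{\cl{k}}$, and since the operations are defined over $k$ they descend to $\Gamma$. That $J = L \perp M$ is a Springer decomposition is built into the construction; compare \cite[38.A]{knus1998involutions}. For \ref{constr2}, the torus $T$ preserves $L$ pointwise and $M$ setwise by definition, and after base change to $\cl{k}$ the conjugate $T_s$ acts on $\Gamma_s$ by twisted composition automorphisms by \Cref{automorphism}; this being a geometric statement, it descends to the action of $T$ on $\Gamma$.

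For \ref{constr3}, the group $\on{Aut}(\Gamma)^{\circ}$ is smooth affine, and over $\cl{k}$ becomes $\on{Aut}(\Gamma_s)^{\circ} = H_s \cong \on{Spin}_8$, so it is simply connected of type $D_4$. The canonical embedding $\on{Aut}(\Gamma)^{\circ} \hookrightarrow G$ I will obtain from the fact that every automorphism of a twisted composition extends uniquely to an automorphism of the associated Albert algebra, via the Springer construction in \cite[\S 38]{knus1998involutions}. Writing $H$ for the image, over $\cl{k}$ it identifies with $H_s$, so in particular $T \subseteq H$. For uniqueness, if $H'$ is another subgroup of $G$ of type $D_4$ containing $T$, then $J^{H'} \c J^T = L$; after base change, $H'_{\cl{k}}$ is $G(\cl{k})$-conjugate to $H_s$, so $J^{H'}_{\cl{k}}$ has dimension $3 = \dim_k L$, which forces $J^{H'} = L$. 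Thus $H'$ lies in the pointwise stabilizer of $L$, whose identity component is $H$ by base change and the split description, and since $H'$ is connected, $H' = H$.

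For \ref{constr4}, given any type-$D_4$ subgroup $H'$ of $G$, I will set $L' := J^{H'}$ and $M' := (L')^{\perp}$; the base change argument shows $L'$ is cubic \'etale of rank $3$, so applying the intrinsic construction of $\mu_{L'}, \mu_{M'}, \eta', q', \cdot^{*2}$ yields a quadruple $\Gamma'$ which is a twisted composition with $J = L' \perp M'$ Springer by repeating the descent of \ref{constr1}. Choosing any maximal torus $T' \subset H'$ (which is maximal in $G$ since $D_4$ has maximal rank in $F_4$) and applying \ref{constr3} to $T'$ identifies $H'$ with $\on{Aut}(\Gamma')^{\circ}$, and uniqueness of $\Gamma'$ is automatic since $L'$ is recovered as $J^{H'}$. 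The main obstacle is the canonical embedding in \ref{constr3}: one must invoke the equivalence between twisted compositions and their associated Albert algebras from \cite[\S 38]{knus1998involutions}, which is the non-formal input that makes the Galois descent bookkeeping above conclude.
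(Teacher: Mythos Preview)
Your treatment of \ref{constr1}, \ref{constr2}, the embedding in \ref{constr3}, and \ref{constr4} matches the paper's: reduce to $\cl{k}$, conjugate to the split situation, invoke \Cref{splitcomp} and \Cref{automorphism}, and use \cite[Proof of Corollary 38.7]{knus1998involutions} for the canonical embedding. The substantive divergence is in the uniqueness part of \ref{constr3}. The paper argues via Borel--de Siebenthal: over $\cl{k}$ the maximal connected subgroups of maximal rank containing $T$ have types $B_4$, $A_2\times A_2$, $A_1\times C_3$, and only the $B_4$ contains a subgroup of type $D_4$, uniquely. You instead aim to show $J^{H'}=L$ and then trap $H'$ inside the pointwise stabilizer of $L$, which is $H$.

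The gap is the sentence ``$H'_{\cl{k}}$ is $G(\cl{k})$-conjugate to $H_s$'': this is the assertion that split $F_4$ has a single conjugacy class of type-$D_4$ subgroups, which is not obvious and is itself normally extracted from the Borel--de Siebenthal classification you seem to be avoiding. A cleaner way to finish your line, without invoking conjugacy at all, is to note that since $H'\supseteq T$ its root system is a closed simply-laced rank-$4$ subsystem of $\Phi(G,T)$, hence consists of roots of a single length, hence coincides with the set of $24$ long roots (the short roots do not form a closed subsystem). The nonzero $T$-weights on $J$ are exactly the short roots, so every root subgroup of $H'$ annihilates $J^T=L$, giving $J^{H'}=L$ directly. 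With that filled in, your fixed-point argument is correct and has the advantage over the paper's of identifying $H$ concretely as the pointwise stabilizer of $L$ in $G$; the same remark repairs the analogous step in your \ref{constr4}.
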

    The quadratic form $M\to L$ associated to $q$ will also be denoted by $q$.
\begin{proof}
\ref{constr1} The maps are defined over $k$, and the properties that we want to prove can be checked on an algebraic closure of $k$, so we may assume that $k$ is algebraically closed and $J=J_s$. Up to conjugation, we may assume that $T=T_s$. Then $L=L_s$ and $M=M_s$. We conclude using \Cref{splitcomp}.

\ref{constr2} After passing to an algebraic closure of $k$, the statement follows immediately from \Cref{automorphism}.

\ref{constr3} We have $J=L\perp M$ by construction. By \cite[Proof of Corollary 38.7]{knus1998involutions}, an automorphism of $\Gamma$ extends uniquely to an automorphism of $J$ as an Albert algebra. It follows that $\on{Aut}(J)$ is canonically a subgroup of $G$, and in particular this is true for $\on{Aut}(J)^{\circ}$. The fact that $T\c H$ is now an immediate consequence of \ref{constr2}. On an algebraic closure of $k$, the torus $T$ is conjugate to $T_s$, say $gT_sg^{-1}=T$. The procedure used to construct $H$ from $T$ coincides with that used to construct $H_s$ on $T_s$, hence $gH_sg^{-1}=H$. Since $H_s\cong \on{Spin}_8$, we conclude that $H$ and $\on{Aut}(\Gamma)^{\circ}$ are simply connected of type $D_4$. 

To show the uniqueness of $H$, we may assume that $k$ is algebraically closed. The result is now a consequence of Borel-de Siebenthal theory, which classifies subgroups of maximal rank of semisimple groups; see \cite[p. 219]{borel1949sous-groupes}. In type $F_4$, there are three  maximal subgroups of maximal rank containing $T$, of type $B_4$, $A_2\times A_2$ and $A_1 \times C_3$. Using \cite[p. 219]{borel1949sous-groupes} again, we see that the last two do not contain subgroups of type $D_4$, and the first contains exactly one.

\ref{constr4} Let $H$ be a subgroup of $G$ of type $D_4$, and let $T$ is a maximal torus of $H$. Since $H$ and $G$ have the same rank, $T$ is a maximal torus of $H$. By \ref{constr3}, it follows that $H$ is the unique simply connected subgroup of $G$ of type $D_4$ containing $T$, and it is the connected component of the automorphism group of the twisted composition associated to $T$.
\end{proof}

\begin{rmk}
Let $G$ be a $k$-group of type $F_4$, let $T$ be a maximal $k$-torus of $G$, and let $\Phi(G,T)$ be the corresponding root system. Since the $\on{Gal}(k)$-action respects the length of a root, it is easy to see that the set of long roots of $\Phi(G,T)$ spans a $k$-group $H$ of type $D_4$ and such that $T\hookrightarrow H\hookrightarrow G$. One can check that if $G=G_s$ and $T=T_s$, then $H=H_s$, thus showing that $H$ is simply connected. We conclude that $H$ is the spin group associated to some trialitarian algebra. However, \Cref{constr} is a much more precise statement: for example, it shows that only spin groups associated to twisted compositions occur.
\end{rmk}

Starting from a twisted composition $\Gamma=(L,M,q,\cdot^{*2})$, there is a canonical way to construct an Albert algebra $J(L,M)$, explained in \cite[\S 6.3]{springer2000octonions}. Let $J:=L\oplus M$ as a $k$-vector space. One defines a quadratic form $Q:J\to k$ as in \cite[(6.17)]{springer2000octonions}, a crossed product $\times:J\times J\to J$ as in \cite[(6.19)]{springer2000octonions}, and product on $J$ as in \cite[(6.21)]{springer2000octonions}. By \cite[Theorem 6.3.2]{springer2000octonions}, this makes $J$ into an Albert algebra, which we denote by $J(L,M)$.

The algebra $J(L,M)$ is called the \emph{Springer construction} associated with $\Gamma$; see \cite[\S 38.A]{knus1998involutions} and in particular \cite[Theorem 38.6]{knus1998involutions}, where the construction is carried out in an equivalent way. The orthogonal decomposition $J(L,M)=L\perp M$ is a Springer decomposition of the Springer construction.

\begin{rmk}\label{springerconstr}
Let $J$ be a Jordan algebra over $k$, and let $\Gamma=(L,M,q,\cdot^{*2})$ be a twisted composition over $k$. Let $G:=\on{Aut}(J)$ and let $H:=\on{Aut}(J)^{\circ}$. If $J\cong J(L,M)$, by \cite[Proof of Corollary 38.7]{knus1998involutions}, an automorphism of $\Gamma$ uniquely extends to an automorphism of $J$ as an Albert algebra, hence $H$ is canonically a subgroup of $G$. 
\end{rmk}

\section{Preliminaries on maximal tori in orthogonal groups}
We set up the notation for quadratic spaces and \'etale algebras with involutions in accordance to \cite[\S 1]{bayer2014embeddings}. In this section, $k$ is a field of characteristic different from $2$.

\subsection{Quadratic spaces}
We say that $(V,q)$ is a quadratic space over $k$ if $V$ is a finite-dimensional $k$-vector space and $q:V\times V\to k$ is a non-degenerate symmetric bilinear form. We use the same symbol $q$ to denote the quadratic form associated with $q$. We denote by $\on{O}(V,q)$ or by $\on{O}(q)$ its orthogonal group. We let $\on{det}(q)\in k^{\times}/k^{\times 2}$ be the determinant of $q$, and we define the discriminant of $q$ as $\on{disc}(q)=(-1)^{\frac{m(m-1)}{2}}\on{det}(q)$, where $m:=\on{dim}(q)$. The quadratic form $q$ can be diagonalized, i.e. there exist $a_1,\dots,a_{m}\in k^{\times}$ such that $q\cong \ang{a_1,\dots,a_{m}}$. 

We denote the Brauer group $\on{Br}(k)$ of $k$, viewed as an abelian group, and we let $\on{Br}_2(k)$ be its $2$-torsion subgroup. If $a,b\in k^{\times}$, we denote by $(a,b)\in \on{Br}_2(k)$ the Brauer class of the quaternion algebra determined by $a,b$. The Hasse invariant of $q$ is by definition $w(q):=\sum_{i<j}(a_i,a_j)\in \on{Br}_2(k)$. 

We denote by $\tau_q:\on{End}(V)\to \on{End}(V)$ the adjoint involution of $q$, i.e. $q(f(x),y)=q(x,\tau_q(f)(y))$ for all $f\in\on{End}(V)$ and $x,y\in V$. If $q$ and $q'$ are quadratic spaces, we denote by $q\perp q'$ their orthogonal sum. 

If $k=\R$, we say that $q$ has signature $(r,s)$ if $q$ is isometric to $x_1^2+\dots+x_r^2-x_{r+1}^2-\dots-x_{r+s}^2$.

\subsection{Maximal tori and \'etale algebras with involution}
Let $E$ be an \'etale algebra over $k$, let $\sigma:E\to E$ be a $k$-linear involution, and denote by $E^{\sigma}$ the subalgebra of $E$ fixed by $\sigma$. By definition, the pair $(E,\sigma)$ is an \'etale algebra with involution. We will always assume that $\on{rank}_kE=2\on{rank}_kE^{\sigma}$. We define the $k$-torus $U(E,\sigma)$ by setting \[U(E,\sigma)(R):=\set{x\in E\otimes_kR: x\sigma(x)=1}\] for every commutative $k$-algebra $R$. 

It is possible to give an alternative description of $U(E,\sigma)$, using Weil restrictions. If $E=E_1\times E_2$, where $E_1$ and $E_2$ are $\sigma$-stable, it is clear that $U(E,\sigma)=U(E_1,\sigma|_{E_1})\times U(E_2,\sigma|_{E_2})$. It is thus sufficient to describe $U(E,\sigma)$ in the cases when (i) $E$ is a field and (ii) $E=F\times F$ and $\sigma$ switches the two field factors $F$. In case (i) $U(E,\sigma)=R_{E^{\sigma}/k}(R^{(1)}_{E/E^{\sigma}}(\G_m))$, and in case (ii) $U(E,\sigma)=R_{F/k}(\G_m)$.

Let $(V,q)$ be a quadratic space of dimension $2n$ over $k$. By \cite[Proposition 1.2.1(i)]{bayer2014embeddings}, every maximal torus $T$ of $\on{O}(V,q)$ is of the form $U(E,\tau_q)$, where $E\c \on{End}(V)$ is an \'etale algebra $\tau_q$-stable and $\on{rank}_kE=2\on{rank}_kE^{\sigma}=2n$; the \'etale subalgebra $E$ is unique. Conversely, by \cite[Proposition 1.2.1(ii)]{bayer2014embeddings}, for every \'etale algebra $E\c \on{End}(V)$ stable under $\tau_q$ and such that $\on{rank}_kE=2\on{rank}_kE^{\tau_q}=2n$, the torus $U(E,\tau_q)$ is a maximal $k$-torus of $\on{O}(V,q)$.

Let $(E,\sigma)$ be an \'etale algebra with involution. For every $\alpha\in E^{\sigma}$, let $q_{\alpha}:E\times E\to k$ be the non-degenerate symmetric bilinear form defined by $q_{\alpha}(x,y):=\on{Tr}_{E/k}(\alpha x\sigma(y))$. By \cite[Proposition 1.3.1]{bayer2014embeddings}, $U(E,\sigma)$ is a maximal subtorus of the orthogonal group $\on{O}(V,q)$ if and only if there exists $\alpha\in E^{\sigma}$ such that $(V,q)$ is isometric to $(E,q_{\alpha})$. When this is the case, we say that $(E,q)$ is \emph{realizable}, and that $T$ is of \emph{type} $(E,\sigma)$; see \cite[\S 1.2]{bayer2014embeddings}.

\section{Description of maximal tori of groups of type $F_4$}
By \Cref{constr}\ref{constr1}, every maximal torus in a group of type $F_4$ is contained inside the automorphism group of a twisted composition. Moreover, \Cref{constr}\ref{constr3} and \ref{constr4} give a complete understanding of groups of type $D_4$ inside a group of type $F_4$. To be able to parametrize tori in groups of type $F_4$, all that is left is the study of maximal tori in automorphism groups of twisted compositions.

\begin{lemma}\label{isogeny}
Let $H_1$ and $H_2$ be reductive groups, and let $\pi:H_1\to H_2$ be an isogeny. The map $\pi$ sets up a correspondence between the maximal tori in $H_1$ and the maximal tori in $H_2$. 
\end{lemma}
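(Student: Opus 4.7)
The plan is to exhibit mutually inverse maps $\Phi$ and $\Psi$ between the two sets of maximal tori, where $\Phi(T_1):=\pi(T_1)$ and $\Psi(T_2):=\pi^{-1}(T_2)^{\circ}$. The key underlying fact is that, since $\pi$ is an isogeny of connected reductive groups, $\ker\pi$ is a finite central subgroup of $H_1$; in particular it is contained in every maximal torus of $H_1$, and $H_1$ and $H_2$ have the same rank $r$.

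First I would check that $\Phi$ lands in the set of maximal tori of $H_2$. Since $T_1$ is connected and diagonalizable, $\pi(T_1)$ is a connected diagonalizable closed subgroup of $H_2$, hence a torus. The restriction $\pi|_{T_1}$ has finite kernel, so $\dim\pi(T_1)=\dim T_1=r$, and therefore $\pi(T_1)$ is maximal. Next, for a maximal torus $T_2$ of $H_2$, set $S:=\pi^{-1}(T_2)^{\circ}$. The restriction $\pi|_S\colon S\to T_2$ has finite kernel $S\cap\ker\pi$, and its image, being a closed connected subgroup of $T_2$ containing the image of $\pi^{-1}(T_2)$ up to finite index, is all of $T_2$; thus $\dim S=\dim T_2=r$. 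Commutativity of $S$ follows because for $s_1,s_2\in S$ the commutator lies in $\ker\pi$, so the commutator map $S\times S\to\ker\pi$ is a morphism from a connected variety into a finite group and must be constant equal to the identity. A connected commutative algebraic group isogenous to a torus is itself a torus (its unipotent radical would map into the finite group $\ker\pi$, forcing it to be trivial), so $S$ is a maximal torus of $H_1$.

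Finally I would verify that $\Phi$ and $\Psi$ are mutually inverse. The identity $\Phi(\Psi(T_2))=T_2$ is immediate from the surjection $S\twoheadrightarrow T_2$ established above. For the other direction, $T_1\subseteq\pi^{-1}(\pi(T_1))$, and since $T_1$ is connected we even have $T_1\subseteq\pi^{-1}(\pi(T_1))^{\circ}=\Psi(\Phi(T_1))$; as both sides are maximal tori of $H_1$ of the same dimension $r$, they coincide.

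The proof is essentially routine; the only point that requires a little care is verifying that $\pi^{-1}(T_2)^{\circ}$ is a torus and not merely a connected commutative group of the correct dimension, which is where the centrality and finiteness of $\ker\pi$ genuinely enter. Everything else reduces to dimension counting and the fact that images and identity components of preimages under $\pi$ interact well with connected closed subgroups.
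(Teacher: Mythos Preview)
Your proof is correct and takes a more elementary, hands-on route than the paper. The paper instead argues via the \emph{variety of maximal tori}: fixing a maximal $k$-torus $T_1\subset H_1$ with normalizer $N_1$, and setting $T_2:=\pi(T_1)$ with normalizer $N_2$, the isogeny induces a surjection $N_1\to N_2$ (since $H_1$ and $H_2$ share the same Weyl group scheme), and because $\ker\pi\subset T_1\subset N_1$ one obtains an isomorphism $H_1/N_1\xrightarrow{\sim}H_2/N_2$. The bijection on maximal $k$-tori then follows from the identification of $k$-points of $H/N$ with maximal $k$-tori (as in Voskresenski\u{\i}). Your approach avoids invoking this machinery by writing down the inverse $T_2\mapsto\pi^{-1}(T_2)^{\circ}$ explicitly and verifying directly that it is a maximal torus. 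The paper's argument is shorter once the variety of maximal tori is taken for granted and makes the correspondence visibly algebraic in families; yours is self-contained and gives the inverse map concretely, which is often what one actually needs in practice. One small caveat: your commutator argument (morphism from the connected $S\times S$ to the finite $\ker\pi$ is constant) is cleanest when $\ker\pi$ is \'etale; in positive characteristic with infinitesimal kernel one should note that a morphism from a reduced connected scheme to a finite group scheme factors through the reduced subscheme, or alternatively observe that over $\bar{k}$ the group $S_{\bar k}$ contains a conjugate of $(T_1)_{\bar k}$ and has the same dimension, hence is a torus.
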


\begin{proof}
Let $T_1$ be a maximal $k$-torus of $H_1$, and let $T_2:=\pi(T_1)$; it is a maximal $k$-torus of $H_2$. Denote by $N_i$ the normalizer of $T_i$ in $H_i$, for $i=1,2$. The isogeny $\pi$ induces a surjection $N_1\to N_2$, because $H_1$ and $H_2$ have the same Weyl group scheme. We deduce that the induced map $\phi:H_1/N_1\to H_2/N_2$ between the varieties of maximal tori of $H_1$ and $H_2$ (see \cite[\S 4.1]{voskresenskii2011algebraic}) is an isomorphism. This concludes the proof. 
\end{proof}

Let $\Gamma=(L,M,q,\cdot^{*2})$ be a twisted composition, and let $H:=\on{Aut}(\Gamma)^{\circ}$. By \Cref{constr}\ref{constr3}, $H$ is a $k$-form of $\on{Spin}_8$, split by $L$. We have an embedding \begin{equation}\label{triality}
    \iota:H\hookrightarrow R_{L/k}(\on{O}(M,q)),
\end{equation}  
that is constructed as follows. We have an embedding $j:H\hookrightarrow {R}_{L/k}(\on{Spin}(M,q))$; see \cite[\S 44.B p. 561]{knus1998involutions} or \cite[\S 3.4 p. 12]{fiori2019rational}. Then $\iota$ is the composition of $j$ with the natural projection ${R}_{L/k}(\on{Spin}(M,q))\to R_{L/k}(\on{O}(M,q))$. We call $\iota$ the trialitarian embedding associated to the twisted composition $(L,M,q,\cdot^{*2})$. The base change $\iota_L$ is the usual trialitarian embedding of $\on{Spin}(M,q)$ in a product of three orthogonal groups.  In particular, $\iota$ is injective.

Let $T$ be a maximal $k$-torus of $H$, and let $T_1=\iota(T)$. The center of the centralizer of $T_1$ in $R_{L/k}(\on{O}(M,q))$ is a $k$-torus $T_2$ containing $T_1$.

\begin{prop}\label{uniquetorus}
The torus $T_2$ is the unique maximal torus of $R_{L/k}(\on{O}(M,q))$ containing $T_1$.
\end{prop}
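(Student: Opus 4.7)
The plan is to reduce everything to showing that the centralizer $C := Z_{R_{L/k}(\on{O}(M,q))}(T_1)$ is itself a maximal $k$-torus. Once $C$ is a torus, it is abelian and so coincides with its own center, giving $T_2 = C$; and any maximal torus $T'$ of $R_{L/k}(\on{O}(M,q))$ containing $T_1$ must lie in $C$ (since $T'$ is abelian, it centralizes $T_1$), whence by maximality $T' = C = T_2$. This yields both existence and uniqueness at once.

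To verify that $C$ is a maximal torus, I would check the statement after base change to $\cl{k}$, since being a torus of a given rank is detected on the algebraic closure and $C$ is smooth (as the centralizer of a smooth subgroup in a smooth group). Over $\cl{k}$, the cubic \'etale algebra $L$ splits, so $R_{L/k}(\on{O}(M,q))_{\cl{k}} \cong \on{O}_8^3$, and the trialitarian embedding $\iota_{\cl{k}}$ is identified with the classical embedding $\on{Spin}_8 \hookrightarrow \on{O}_8^3$ through the three inequivalent $8$-dimensional representations $\rho_1,\rho_2,\rho_3$ of $\on{Spin}_8$. The image $\rho_i(T_{\cl{k}})$ of the maximal torus $T_{\cl{k}} \subset \on{Spin}_8$ under each $\rho_i$ is a maximal torus of $\on{SO}_8$.

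Because $T_{1,\cl{k}}$ sits in $\on{O}_8^3$ by simultaneous embedding through the three $\rho_i$, the centralizer decomposes as a product: $C_{\cl{k}} = \prod_{i=1}^{3} Z_{\on{O}_8}(\rho_i(T_{\cl{k}}))$. A direct calculation in a weight basis for a maximal torus $T'$ of $\on{SO}_8$ shows that any element of $\on{O}_8$ commuting with $T'$ must be diagonal in that basis and satisfy the reciprocal relations $c_jd_j = 1$ imposed by the split bilinear form, and hence lies in $T'$. Consequently $Z_{\on{O}_8}(\rho_i(T_{\cl{k}})) = \rho_i(T_{\cl{k}})$, and $C_{\cl{k}}$ is a product of three maximal tori of $\on{O}_8$, that is, a maximal torus of $\on{O}_8^3$ of the expected rank $12 = \on{rank} R_{L/k}(\on{O}(M,q))$.

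I do not foresee a serious obstacle; the proof is essentially a base-change verification combined with the self-centralizing property of maximal tori in the full orthogonal group. The only point worth emphasizing is that this self-centralizing property holds in $\on{O}_{2n}$, and not only in $\on{SO}_{2n}$: an element of $\on{O}_{2n}\setminus \on{SO}_{2n}$ commuting with the torus must still be diagonal in the weight basis, and the reciprocal relations then force it into the torus. Descending to $k$ yields that $C = T_2$ is the unique maximal torus of $R_{L/k}(\on{O}(M,q))$ containing $T_1$.
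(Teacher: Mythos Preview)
Your proof is correct and follows essentially the same route as the paper: reduce to $\cl{k}$, where $R_{L/k}(\on{O}(M,q))$ splits as $\on{O}_8^3$ and the trialitarian embedding projects $T$ onto a maximal torus in each factor. The paper argues more briefly by bounding the rank of $T_2$ from below via these projections, whereas you compute the full centralizer and invoke the self-centralizing property of maximal tori in $\on{O}_{2n}$; both arguments reach the same conclusion.
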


\begin{proof}
This can be checked over an algebraic closure of $k$, where $R_{L/k}(\on{O}(M,q))$ becomes a product of three orthogonal groups, $T_2$ becomes a product of three tori, and $\iota$ is the usual trialitarian embedding. Since $T$ projects isomorphically under each of the three projections, every factor of $T_2$ has rank at least $\on{rank}T=4$. We conclude that $T_2$ is a maximal torus in $R_{L/k}(\on{O}(M,q))$. Every maximal torus containing $T_1$ must contain the center of the centralizer of $T_1$ in $R_{L/k}(\on{O}(M,q))$, which by definition is $T_2$. This completes the proof.
\end{proof} 

\begin{defin}
Let $T$ be a maximal $k$-torus of $H$. We let $T'$ be the unique $L$-torus $U$ of $\on{O}(M,q)$ such that $T_2=R_{L/k}(U)$. 
\end{defin}

For the uniqueness statement implicit in the definition, see \cite[Lemma 4.2]{fiori2019rational} when $k$ is infinite and \cite[Remark 4.3]{fiori2019rational} when $k$ is finite.\footnote{For the proof of the main results of this paper, only the case when $k$ is infinite is relevant.} When $k$ is infinite, the torus $T'$ is the center of the centralizer of $T_2(k)\c \on{O}(M,q)(L)$.

The function $T\mapsto T'$ from the set of maximal $k$-tori in $H$ to the set of maximal $L$-tori in $O(M,q)$ is injective, but it is not necessarily surjective.

\section{Maximal tori and \'etale algebras with involution}\label{maxtori-etalesub}

Let $(E,\sigma)$ be an \'etale algebra with involution over $k$ such that $\on{rank}_kE=2\on{rank}_kE^{\sigma}=2n$, and set
\[\Phi:=\set{\phi\c \on{Hom}_k(E,\cl{k}): \phi\cap \phi\sigma=\emptyset, \phi\cup \phi\sigma= \on{Hom}_k(E,\cl{k})}.\] The action of $\on{Gal}(k)$ on $\on{Hom}(E,\cl{k})$ induces an action on $\Phi$. We let $E^{\Phi}$ be the \'etale algebra corresponding to $\Phi$. The action of $\sigma$ on $E$ induces an involution on $\on{Hom}_k(E,\cl{k})$ which commutes with the $\on{Gal}(k)$-action, which in turn induces an involution on $\Phi$. We denote by the same letter $\sigma$ the corresponding involution on $E^{\Phi}$, so that $(E^{\Phi},\sigma)$ is an \'etale algebra with involution. It is not difficult to show that $\on{rank}_kE^{\Phi}=2\on{rank}_k(E^{\Phi})^{\sigma}=2^n$. 

Let $L$ be a cubic \'etale $k$-algebra, and let $(E,\sigma)$ be an \'etale algebra with involution over $L$, such that $\on{rank}_LE=2\on{rank}_LE^{\sigma}=8$. By \cite[Corollary 18.28]{knus1998involutions} we have an isomorphism $L\otimes_kL\cong L\times (L\otimes_k\Delta)$, where $\Delta=\Delta(L)$ is the discriminant algebra of $L$; see \cite[p. 291]{knus1998involutions}. It follows that we have a $k$-isomorphism
\[(E,\sigma)\otimes_kL\cong (E,\sigma)\times (E_2,\sigma_2)\] where $(E_2,\sigma_2)$ is an \'etale algebra over $L\otimes_k \Delta$.

\begin{defin}\label{datum}
A \emph{datum} over $k$ is a quadruple $(L,E,\sigma,\beta)$, where $L$ is a cubic \'etale $k$-algebra, $(E,\sigma)$ is an \'etale $L$-algebra with involution such that $\on{rank}_LE=2\on{rank}_LE^{\sigma}=8$ and $\beta$ is an isomorphism \begin{equation}\label{beta-eq}\beta:(E,\sigma)\otimes_kL\xrightarrow{\sim} (E\times E^{\Phi},\sigma\times \sigma).\end{equation}
\end{defin}
There is an obvious notion of $k$-isomorphism of data over $k$, which we do not spell out explicitly.

Let $\alpha$ be a datum. We define the torus $T(\alpha)$ by setting
\begin{equation}\label{intrinsicF4}T(\alpha)(R):=\set{x\in (E\otimes_kR)^{\times}: x\sigma(x)=1,\Psi(x_2)=x_1\otimes 1} \end{equation} for every $k$-algebra $R$. Here we denote $\beta(x\otimes 1)=(x_1,x_2)$ for every $x\in (E\otimes_kR)^{\times}$. Furthermore, the homomorphism of $k$-algebras $\Psi:E^{\Phi}\to E\otimes_kE^{\Phi}$ is defined by considering the $\on{Gal}(k)$-equivariant homomorphism
\[\Psi(\sum_{\phi\in \Phi} a_\phi e_\phi):= \sum_{\phi_1\cap \phi_2=\set{\rho}} a_{\phi_1}a_{\phi_2}e_{\rho}\otimes( e_{\phi_1}+e_{\phi_2}+e_{\phi_1\sigma}+e_{\phi_2\sigma})\] over a separable closure of $k$, and then taking $\on{Gal}(k)$-invariants; see \cite[Definition 4.25]{fiori2019rational}. Using $\beta$, we may equivalently describe the $R$-points of $T(\alpha)$ as
\[T(\alpha)(R)=\set{(x_1,x_2)\in ((E\times E^{\Phi})\otimes_kR)^{\times}: x_i\sigma(x_i)=1, \Psi(x_2)=x_1}.\]

\begin{lemma}\label{relationD}
\begin{enumerate}[label=(\roman*)]
    \item\label{relationD1} Let $\Gamma=(L,M,q,\cdot^{*2})$ be a twisted composition over $k$, let $\iota:\on{Aut}(\Gamma)^{\circ}\hookrightarrow R_{L/k}(\on{O}(M,q))$ be the trialitarian embedding, and let $T$ be a maximal torus in $\on{Aut}(\Gamma)^{\circ}$. There exists a unique $\tau_q$-stable \'etale $L$-subalgebra $E$ of $\on{End}_L(M)$ such that $\on{rank}_LE=2\on{rank}_LE^{\tau_q}=8$ and such that $T=\iota^{-1}(R_{L/k}(U(E,\tau_q))$. Moreover, there is a canonical isomorphism \[\beta_{T}:(E,\sigma)\otimes_kL\xrightarrow{\sim}(E\times E^{\Phi},\sigma\times \sigma)\] which makes $\alpha=(L,E,\tau_q,\beta_{T})$ into a datum.
    \item\label{relationD2} Let $\alpha=(L,E,\sigma,\beta)$ be a datum. Then there exists a twisted composition $\Gamma=(L,M,q,\cdot^{*2})$ and an \'etale $L$-subalgebra $E'\c \on{End}_L(M)$ such that $(E,\sigma)\cong (E',\tau_q)$, $T:=\iota^{-1}(U(E',\tau_q|_{E'}))$ is a maximal torus in $\on{Aut}(\Gamma)^{\circ}$, and $\beta_{T}=\beta$.
\end{enumerate}
\end{lemma}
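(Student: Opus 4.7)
The plan is to chain together the correspondences established in the previous sections. I would set $T_1 := \iota(T) \c R_{L/k}(\on{O}(M,q))$ and invoke \Cref{uniquetorus} to obtain the unique maximal torus $T_2 = R_{L/k}(T')$ of $R_{L/k}(\on{O}(M,q))$ containing $T_1$, with $T'$ a maximal $L$-torus of $\on{O}(M,q)$. Applying \cite[Proposition 1.2.1]{bayer2014embeddings} in the $L$-relative setting to $T'$ then yields the desired unique $\tau_q$-stable \'etale $L$-subalgebra $E \c \on{End}_L(M)$ with $\on{rank}_L E = 2\on{rank}_L E^{\tau_q} = 8$ and $T' = U(E, \tau_q)$. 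Since $\iota^{-1}(T_2)$ is a closed abelian subgroup of $H$ containing the maximal torus $T$, we recover $T = \iota^{-1}(R_{L/k}(U(E, \tau_q)))$.

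To produce $\beta_T$, I would base change to $L$, which splits $H$ (since $H$ is a form of $\on{Spin}_8$ split by $L$) and splits $R_{L/k}(\on{O}(M,q))$. The identification $L \otimes_k L \cong L \times (L \otimes_k \Delta)$ decomposes $R_{L/k}(\on{O}(M,q))_L$ and correspondingly splits $E \otimes_k L$ as an $L \otimes_k L$-algebra into $(E,\sigma)$ (over $L$) and some $(E_2, \sigma_2)$ (over $L \otimes_k \Delta$). Triality embeds $H_L$ diagonally into three copies of $\on{SO}_8$, and this structural feature canonically identifies $(E_2, \sigma_2)$ with $(E^{\Phi}, \sigma)$, producing $\beta_T$. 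The rank and compatibility conditions making $(L, E, \tau_q, \beta_T)$ into a datum are then automatic.

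\textbf{Part (ii).} The plan is to reverse the procedure. Given $\alpha = (L, E, \sigma, \beta)$, I would take $M := E$ as an $L$-module and realize $U(E, \sigma) \hookrightarrow \on{O}(E, q_{\alpha_0})$ as a maximal $L$-torus for a suitable $\alpha_0 \in E^{\sigma}$, setting $q := q_{\alpha_0}$ and $E' := E$ embedded in $\on{End}_L(M)$ via multiplication. The crucial step is then to equip $(L, M, q)$ with a twisting $\cdot^{*2}$ making it into a twisted composition for which the maximal $L$-torus $U(E', \tau_q)$ of $\on{O}(M, q)$ descends to a maximal $k$-torus of $\on{Aut}(\Gamma)^{\circ}$. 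The isomorphism $\beta$ provides exactly the Galois descent data: the decomposition $(E, \sigma) \otimes_k L \cong (E \times E^{\Phi}, \sigma \times \sigma)$ identifies $U(E, \sigma)$ with the projection onto one of the three orthogonal factors in the split picture, while $U(E^{\Phi}, \sigma)$ gives the other two factors. This triple, together with the triality relations implicit in the definition of $\Phi$, determines the $k$-form of $\on{Spin}_8$ and hence, by \Cref{constr}\ref{constr3} and \ref{constr4}, a unique twisted composition $\Gamma$ with $H = \on{Aut}(\Gamma)^{\circ}$. The compatibility $\beta_T = \beta$ is then automatic by the symmetry between the constructions in (i) and (ii).

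\textbf{Main obstacle.} The hard part is verifying that $\beta$ actually descends to a well-defined twisted composition structure $\cdot^{*2}$ on $(M, q)$, as opposed to an arbitrary $k$-form of $\on{SO}(M, q)$. Concretely, one must check that axioms (\ref{construction-eq3}) and (\ref{construction-eq4}) can be recovered from the combinatorial structure of $\Phi$ together with the compatibility conditions on $\beta$. The set $\Phi$ is defined precisely so as to parametrize the $S_3$-action on the three spin representations of $\on{Spin}_8$, which suggests these axioms will fall out, but verifying this carefully (and choosing $\alpha_0$ compatibly with $\beta$) is where the technical work of the proof lies.
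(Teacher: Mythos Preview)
Your part (i) is essentially the paper's argument unpacked: the paper simply cites \cite[Lemma 4.31, Lemma 4.32]{fiori2019rational} (which carry out, for trialitarian algebras, exactly the steps you describe) and then invokes \Cref{compatible} to transfer from $\on{End}(\Gamma)$ back to $\Gamma$. So there is no real difference here beyond level of detail.

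For part (ii) you take a genuinely different route. The paper does \emph{not} build $\cdot^{*2}$ on $(L,M,q)$ directly from $\beta$. Instead it applies \cite[Lemma 4.35]{fiori2019rational} to produce a trialitarian algebra $\Lambda'=(\on{End}_L(M),L,\tau_q,\eta')$ together with a maximal torus of $\on{Spin}(\Lambda')$ realizing $(L,E,\sigma,\beta)$, and then invokes \cite[Proposition 44.16(1)]{knus1998involutions} to conclude that $\Lambda'\cong\on{End}(\Gamma')$ for some twisted composition $\Gamma'$. This sidesteps entirely the verification of (\ref{construction-eq3})--(\ref{construction-eq4}) that you flag as the main obstacle: the passage from trialitarian structure to twisted composition is packaged once and for all in \cite{knus1998involutions}. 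Your direct approach would work in principle and is more self-contained, but it duplicates work already done in those references.

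There is also a genuine gap in your argument as written. You appeal to \Cref{constr}\ref{constr3} and \ref{constr4} to pass from ``a $k$-form of $\on{Spin}_8$'' to ``a unique twisted composition $\Gamma$''. But those results concern subgroups of type $D_4$ inside a fixed group of type $F_4$; they presuppose an ambient Albert algebra $J$ and a Springer decomposition $J=L\perp M$. In part (ii) you have no such $J$ yet, so the citation is circular. What you actually need at that step is precisely the statement that a trialitarian algebra with split underlying algebra comes from a twisted composition, i.e.\ \cite[Proposition 44.16(1)]{knus1998involutions} --- which is exactly the ingredient the paper uses.
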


We will prove the lemma by invoking results of \cite[\S 4]{fiori2019rational}. Analogous properties are established there for trialitarian algebras. We first need to address the compatibility of our constructions with those of \cite{fiori2019rational}.

\begin{rmk}\label{compatible}
Let $\Gamma=(L,M,q,\cdot^{*2})$ be a twisted composition, and let $\Lambda=\on{End}(\Gamma)=(\on{End}_L(M),L,\tau_q,\eta)$ be the associated trialitarian algebra; see \cite[Example 43.7]{knus1998involutions}. Then $\on{Aut}(\Gamma)^{\circ}$ is canonically isomorphic to $\on{Spin}(\Lambda)$, where the latter group is the simply connected group associated to $\on{Aut}(\Gamma)$ (which is a twisted form of $\on{PGO}_8^+$). We also have a canonical $L$-isomorphism $\on{O}(M,q)\cong \on{O}(\on{End}_L(M),\tau_q)$. We obtain a commutative diagram
\[
\begin{tikzcd}
\on{Aut}(\Gamma)^{\circ} \arrow[r,hook,"\iota"] \arrow[d,equal] & R_{L/k}(\on{O}(M,q)) \arrow[d,equal] \\
\on{Spin}(\Lambda) \arrow[r,hook,"\iota'"] &  R_{L/k}(\on{O}(\on{End}_L(M),\tau_q)), 
\end{tikzcd}
\]
where $\iota$ and $\iota'$ are the trialitarian embeddings. Let $T$ be a maximal torus of $\on{Aut}(\Gamma)^{\circ}$. By \Cref{uniquetorus}, $\iota(T)$ is contained in a unique maximal $L$-torus of $R_{L/k}(\on{O}(M,q))$; let $(E,\sigma)$ be its type. If we view $T$ as a torus of $\on{Spin}(\Lambda)$, by \cite[Lemma 4.31]{fiori2019rational} there exists a unique maximal $L$-torus $U$ of $R_{L/k}(\on{O}(\on{End}_L(M),\tau_q))$ containing $\iota'(T)$. By transport of structure along the isomorphism $\Lambda\cong\on{End}(\Gamma)$, it is immediate to see that $U$ also has type $(E,\sigma)$.
\end{rmk}

\begin{proof}[Proof of \Cref{relationD}]

\ref{relationD1} Apply \cite[Lemma 4.31]{fiori2019rational} to $\on{End}(\Gamma)$ to construct $E$, then apply \cite[Lemma 4.32]{fiori2019rational} to construct $\beta_T$. 

\ref{relationD2} By \cite[Lemma 4.35]{fiori2019rational} applied to $(\on{End}_L(M),L,\tau_q)$, there exist a trialitarian algebra $\Lambda'=(\on{End}_L(M),L,\tau_q,\eta')$ and a maximal torus $T$ of $\on{Spin}(\Lambda')$ such that the procedure of \cite[Lemma 4.32, Lemma 4.35]{fiori2019rational} associates to $T$ the quadruple $(L,E,\sigma,\beta)$. By \cite[Proposition 44.16(1)]{knus1998involutions}, $\Lambda'\cong \on{End}(\Gamma')$ for some twisted composition $\Gamma'$. By \Cref{compatible}, there exists a maximal torus of $\on{Aut}(\Gamma')$ with datum $\alpha$.
\end{proof}

The following proposition gives the precise relation between maximal tori in groups of type $F_4$ and data $(L,E,\sigma,\beta)$. In particular, it shows that such data allow a uniform description of all maximal tori in groups of type $F_4$.

\begin{prop}\label{relation}
\begin{enumerate}[label=(\roman*)]
    \item\label{relation1} Let $J$ be an Albert $k$-algebra, let $G:=\on{Aut}(J)$, and let $T$ be a maximal torus of $G$. Let $\Gamma=(L,M,q,\cdot^{*2})$ be the twisted composition associated to $T$ in \Cref{constr}, and let $\iota:\on{Aut}(\Gamma)^{\circ}\hookrightarrow R_{L/k}(\on{O}(M,q))$ be the trialitarian embedding. There exists a unique $\tau_q$-stable \'etale $L$-subalgebra $E$ of $\on{End}_L(M)$ such that $\on{rank}_LE=2\on{rank}_LE^{\tau_q}=8$ and such that $T=\iota^{-1}(R_{L/k}(U(E,\tau_q))$. Moreover, there is a canonical isomorphism \[\beta_{T}:(E,\sigma)\otimes_kL\xrightarrow{\sim}(E\times E^{\Phi},\sigma\times \sigma)\] which makes $\alpha=(L,E,\tau_q,\beta_{T})$ into a datum.
    \item\label{relation2} Let $\alpha=(L,E,\sigma,\beta)$ be a datum. There exist a group $G$ of type $F_4$ and a maximal torus $T$ of $G$ such that the construction of \ref{relation1} applied to $T$ and $G$ gives a datum isomorphic to $\alpha$.
\end{enumerate}
\end{prop}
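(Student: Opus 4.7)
The plan is to deduce \Cref{relation} from \Cref{relationD} essentially by bookkeeping, using the bijection between groups of type $F_4$ (with a chosen maximal torus) and $D_4$-subgroups obtained by Springer construction.

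For part \ref{relation1}, I would argue as follows. Let $T \subseteq G$ be a maximal torus. By \Cref{constr}\ref{constr1}--\ref{constr3}, applied to $T \subseteq G$, we obtain a twisted composition $\Gamma = (L, M, q, \cdot^{*2})$ with $J = L \perp M$ a Springer decomposition, and $T$ sits inside the simply connected group $H = \on{Aut}(\Gamma)^{\circ}$ of type $D_4$. Since $H$ and $G$ have the same rank, $T$ is automatically a maximal torus of $H$. Now apply \Cref{relationD}\ref{relationD1} to the maximal torus $T$ of $H = \on{Aut}(\Gamma)^{\circ}$: this produces the unique $\tau_q$-stable \'etale $L$-subalgebra $E \subseteq \on{End}_L(M)$ of the right rank satisfying $T = \iota^{-1}(R_{L/k}(U(E,\tau_q)))$, together with the canonical isomorphism $\beta_T$ making $(L, E, \tau_q, \beta_T)$ a datum. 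The uniqueness claim carries over directly from \Cref{relationD}\ref{relationD1}.

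For part \ref{relation2}, I would reverse the process using the Springer construction. Given a datum $\alpha = (L, E, \sigma, \beta)$, apply \Cref{relationD}\ref{relationD2} to obtain a twisted composition $\Gamma = (L, M, q, \cdot^{*2})$, an \'etale $L$-subalgebra $E' \subseteq \on{End}_L(M)$ with $(E, \sigma) \cong (E', \tau_q)$, and a maximal torus $T$ of $H := \on{Aut}(\Gamma)^{\circ}$ such that $T = \iota^{-1}(R_{L/k}(U(E', \tau_q|_{E'})))$ and $\beta_T = \beta$. Now form the Springer construction $J := J(L, M)$ of \cite[\S 6.3]{springer2000octonions}; by \Cref{springerconstr}, there is a canonical embedding $H \hookrightarrow G := \on{Aut}(J)$, which is a $k$-group of type $F_4$. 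Since $H$ is of type $D_4$ and has the same rank as $G$, the torus $T$ remains a maximal torus of $G$.

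The step that requires the most care is checking that the datum produced from $T \subseteq G$ by the recipe of \ref{relation1} really is (isomorphic to) $\alpha$, i.e.\ that the twisted composition recovered from $T$ in \Cref{constr} coincides with the original $\Gamma$. Here one uses that the Springer decomposition $J = L \perp M$ is by construction $T$-invariant with $L = J^T$, so applying the procedure of \Cref{constr} to $T \subseteq G = \on{Aut}(J(L,M))$ returns exactly the input pair $(L, M)$, and the bilinear maps $\mu_L, \mu_M, \eta, q$ defined from the multiplication and cross product on $J(L,M)$ agree with the original structure maps of $\Gamma$ by the defining formulas of \cite[(6.17), (6.19), (6.21)]{springer2000octonions}. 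Once the twisted composition is identified, the associated datum coincides with $\alpha$ by the compatibility in \Cref{relationD}\ref{relationD2}, completing the proof.
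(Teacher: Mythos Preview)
Your proposal is correct and follows essentially the same approach as the paper, whose proof is the single line ``Combine \Cref{constr} with \Cref{relationD}.'' You have simply unpacked this: using \Cref{constr} to pass between maximal tori in $G$ and in $H=\on{Aut}(\Gamma)^{\circ}$, and \Cref{relationD} to pass between maximal tori in $H$ and data; your extra care in part \ref{relation2} (invoking \Cref{springerconstr} and checking that the Springer decomposition of $J(L,M)$ recovers $\Gamma$) makes explicit what the paper leaves implicit.
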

\begin{proof}
Combine \Cref{constr} with \Cref{relationD}.
\end{proof}

To complete the parallel with \cite{bayer2014embeddings}, we define the type of a maximal torus of a group of type $F_4$, and a notion of realizability.

\begin{defin}\label{type}
Let $J$ be an Albert algebra over $k$, let $G:=\on{Aut}(J)$, let $T$ be a maximal $k$-torus of $G$, and let $\alpha=(L,E,\sigma,\beta)$ be a datum. We say that $T$ is of \emph{type} $\alpha$ if the datum associated to $T$ in \Cref{relation}\ref{relation1} is isomorphic to $\alpha$. We say that $(G,\alpha)$ is \emph{realizable} if there exists a torus of type $\alpha$.
\end{defin}

\begin{rmk}\label{obvious}
Let $G$ be a group of type $F_4$, let $T$ be a maximal torus of $G$ of type $\alpha=(L,E,\sigma,\beta)$, and let $\Gamma=(L,M,q,\cdot^{*2})$ be the twisted composition associated to $T$ by \Cref{constr}. Then, by definition of type, $(E,q)$ is realizable in the sense of orthogonal groups.
\end{rmk}

\begin{lemma}\label{disc}
Let $\alpha=(L,E,\sigma,\beta)$ be a datum. Then:
\begin{enumerate}[label=(\roman*)]
    \item\label{disc1} $\on{disc}_L(E)=\on{disc}_k(L)$ in $L^{\times}/L^{\times 2}$;
    \item\label{disc2} The Clifford $L$-algebra $C(M,q)$ is split, that is, $(M,q)$ has trivial Clifford invariant.
\end{enumerate}
\end{lemma}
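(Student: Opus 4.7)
The plan is to translate both assertions into statements about invariants of the norm form $q$ of a twisted composition realizing $\alpha$, compute one side from the étale algebra with involution, and match the other side against the known structural properties of rank $8$ twisted compositions.

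By \Cref{relation}\ref{relation2}, there exists a group $G$ of type $F_4$ and a maximal torus $T\subseteq G$ of type $\alpha$. Let $\Gamma=(L,M,q,\cdot^{*2})$ be the twisted composition associated to $T$ via \Cref{constr}. By \Cref{obvious}, $(E,q)$ is realizable in the $L$-orthogonal group $\on{O}(M,q)$, so there exists $\lambda\in E^\sigma$ with $(M,q)\cong (E,q_\lambda)$ as quadratic $L$-spaces. In particular $\on{disc}_L(q)=\on{disc}_L(q_\lambda)$ and the Clifford $L$-algebras of $(M,q)$ and $(E,q_\lambda)$ are isomorphic, so it suffices to compute the invariants of $q_\lambda$ from the datum and the invariants of $q$ from the twisted composition structure.

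For \ref{disc1}, I would run the standard discriminant-of-trace-form computation. Decompose $(E,\sigma)$ as an $L$-product of $\sigma$-indecomposable blocks, each either of the shape $F_i[z]/(z^2-e_i)$ with $F_i=E_i^{\sigma}$, or $F_i\times F_i$ with $\sigma$ swapping the factors. Working block-by-block with the formula $\on{disc}_L(T_a)=N_{F/L}(a)\,\on{disc}_L(F)$ for the twisted trace form $T_a(x,y)=\on{Tr}_{F/L}(axy)$, one gets
\[\on{disc}_L(q_\lambda)=(-1)^{\on{rank}_L E^\sigma}\on{disc}_L(E)=\on{disc}_L(E),\]
the sign being trivial because $\on{rank}_L E^\sigma=4$. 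Thus \ref{disc1} reduces to the equality $\on{disc}_L(q)=\on{disc}_k(L)$ in $L^\times/L^{\times 2}$, a structural property of the norm form of a rank $8$ twisted composition that is visible in the split case of \Cref{splitcomp} (where both sides are trivial) and can be extracted in general from \cite[\S 36]{knus1998involutions} using the identity $q(x)q(x^{*2})=N_{L/k}(q(x))$.

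For \ref{disc2}, I would appeal to the analogous known fact that the Clifford $L$-algebra of the norm form of a rank $8$ twisted composition is split. In the split situation of \Cref{splitcomp} this is immediate since $q=(N,N,N)$ with $N$ the octonion norm, a $3$-fold Pfister form with split Clifford algebra. The main obstacle is the verification of the two structural properties of $q$ in the non-split case; these are genuine inputs from the theory of twisted compositions, and the cleanest route is to derive them from the axioms of a twisted composition, in particular the multiplicativity relation $q(x)q(x^{*2})=N_{L/k}(q(x))$ and the compatibility of $q$ with the cubic étale algebra $L$, rather than by naive Galois descent (which is insufficient because $L^\times/L^{\times 2}\to (L\otimes_k k_s)^\times/(L\otimes_k k_s)^{\times 2}$ is the zero map).
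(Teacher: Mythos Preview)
Your approach to \ref{disc1} is essentially the paper's: the identity $\on{disc}_L(q_\lambda)=\on{disc}_L(E)$ is exactly \cite[Lemma~1.3.2]{bayer2014embeddings}, and the structural fact $\on{disc}_L(q)=\on{disc}_k(L)$ for the norm form of a twisted composition is what the paper quotes as \cite[Corollary~3.19]{fiori2019rational}. So this part is fine, modulo your pointer to \cite[\S 36]{knus1998involutions} being somewhat vague.

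For \ref{disc2} there is a genuine gap. You correctly isolate the target --- that the Clifford $L$-algebra of $q$ is split --- and you correctly note that naive Galois descent from the split case does not work. But you then defer the claim to a ``known fact'' and propose to extract it from the multiplicativity relation $q(x)q(x^{*2})=N_{L/k}(q(x))$ without carrying this out; it is not at all clear how that relation alone computes a Brauer class in $\on{Br}_2(L)$. The paper's argument is concrete and different from your sketch: when $L=k\times F$ is not a field, \cite[Proposition~44.13]{knus1998involutions} gives the splitting directly. When $L$ is a field, base-change $\Gamma$ along $k\to L$; the cubic algebra becomes $L\otimes_kL\cong L\times(L\otimes_k\Delta)$, which is not a field, so the first case (now over base $L$) shows that the image of the Clifford class under $\on{Res}\colon\on{Br}_2(L)\to\on{Br}_2(L\otimes_kL)$ vanishes. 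This restriction map is injective, since composing with corestriction is multiplication by $3$, hence the identity on $2$-torsion. Therefore the Clifford class already vanishes in $\on{Br}_2(L)$. This reduction --- changing the base so that the cubic algebra acquires a factor of the base --- is the missing idea in your proposal.
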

Here, we are identifying $\on{disc}_k(L)\in k^{\times}/k^{\times 2}$ with its image under the natural map $k^{\times}/k^{\times 2}\to L^{\times}/L^{\times 2}$. This map is injective; see \cite[Proposition 18.34]{knus1998involutions}.
\begin{proof}
 By \Cref{relation}\ref{relation2}, there exist a $k$-group $G$ of type $F_4$ and a maximal $k$-torus $T$ of $G$ of type $\alpha$. Let $\Gamma=(L,M,q,\cdot^{*2})$ be the twisted composition associated to $T$ by \Cref{constr}.
 
 \ref{disc1} By \Cref{obvious} $(E,q)$ is realizable. By \cite[Corollary 3.19]{fiori2019rational}, $\on{disc}(q)=\on{disc}_{k}(L)\in L^{\times}/L^{\times 2}$. By \cite[Lemma 1.3.2]{bayer2014embeddings} we have $\on{disc}(q)=\on{disc}_L(E)\in L^{\times}/L^{\times 2}$, hence $\on{disc}_L(E)=\on{disc}_k(L)\in L^{\times}/L^{\times 2}$.

\ref{disc2} If $L=k\times F$, the claim follows from \cite[Proposition 44.13]{knus1998involutions}. If $L$ is a field, the map $\on{Res}:\on{Br}_2(k)\to \on{Br}_2(L)$ given by tensoring by $L$ is injective, because the composition \[\on{Br}_2(k)\xrightarrow{\on{Res}} \on{Br}_2(L)\xrightarrow{\on{Cores}} \on{Br}_2(k)\] is given by $[A]\mapsto [A]^3=[A]$. Here $\on{Cores}$ is the corestriction map. Since $L\otimes_kL\cong L\times (L\otimes_k\Delta)$, the conclusion follows from the case when $L$ is not a field.
\end{proof}

One could describe $E^{\Phi}$ more explicitly. We refrain from doing so here, and instead refer the interested reader to \cite[\S 4.5.1]{fiori2019rational}.

\section{Local-global principle for tori in orthogonal groups}
For the proof of \Cref{mainthm}, we will need to make use of the main results in \cite{bayer2014embeddings}. In this section, we give an account of what we need.

The following observation will be useful in the sequel.
\begin{lemma}\label{switch}
Let $(E,\sigma)$ be an \'etale $k$-algebra with involution of rank $2n$. Assume that $E=R\times R$ for some \'etale algebra $R$, and that $\sigma$ acts by switching the two factors. Then $(E,q)$ is realizable if and only if $q\cong h_{2n}$ is hyperbolic.
\end{lemma}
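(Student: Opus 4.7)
The plan is to compute the trace form $q_\alpha$ explicitly and observe that the two factors of $E = R \times R$ provide a complete polarization, forcing hyperbolicity regardless of the choice of $\alpha$.

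First I would identify $E^{\sigma}$ with $R$ via the diagonal embedding $r \mapsto (r,r)$, noting that an invertible $\alpha \in E^\sigma$ corresponds to an invertible $r \in R$. For arbitrary $(x_1,x_2), (y_1,y_2) \in E$, since $\sigma(y_1,y_2) = (y_2,y_1)$, the defining formula gives
\[
q_\alpha\bigl((x_1,x_2),(y_1,y_2)\bigr) = \on{Tr}_{E/k}\bigl((rx_1y_2,\, rx_2y_1)\bigr) = \on{Tr}_{R/k}\bigl(r(x_1y_2 + x_2y_1)\bigr).
\]
In particular, setting $x_2 = y_2 = 0$ (respectively $x_1 = y_1 = 0$) shows that the two subspaces $R_1 := R \times 0$ and $R_2 := 0 \times R$ are totally isotropic for $q_\alpha$. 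Each has $k$-dimension $n$, they are complementary in $E$, and together span all of $E$, which has dimension $2n$. This characterizes $q_\alpha$ as the hyperbolic form $h_{2n}$.

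From here both directions are immediate. If $(E,q)$ is realizable, then by \cite[Proposition 1.3.1]{bayer2014embeddings} (recalled in Section~\ref{maxtori-etalesub}) there exists $\alpha \in E^\sigma$ with $q \cong q_\alpha$, and the computation above forces $q \cong h_{2n}$. Conversely, if $q \cong h_{2n}$, then taking $\alpha = 1 = (1,1) \in E^\sigma$ yields $q_\alpha \cong h_{2n} \cong q$, so $(E,q)$ is realizable by the same proposition.

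There is no real obstacle here; the only point requiring attention is verifying that the bilinear form $q_\alpha$ is non-degenerate (which is part of its very definition, and is automatic for $\alpha$ a unit since $\on{Tr}_{R/k}$ induces a perfect pairing on $R$). The core observation is simply that the ``hyperbolic'' structure of $(E,\sigma)$ is inherited directly by the associated trace form, independently of the weight $\alpha$.
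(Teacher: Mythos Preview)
Your argument is correct and follows exactly the approach sketched in the paper's proof: compute $q_\alpha$ explicitly, observe that the two factors $R\times 0$ and $0\times R$ are complementary totally isotropic subspaces (so $q_\alpha$ is hyperbolic for every $\alpha$), and then invoke \cite[Proposition 1.3.1]{bayer2014embeddings} for both directions. The paper merely states this in one line and refers to \cite[Lemma 2.1.1]{bayer2014embeddings}; you have written out the details, but there is no difference in method.
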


\begin{proof}
One shows that the quadratic form $q_{\alpha}:x\mapsto \on{Tr}_{R/k}(\alpha x^2)$ is hyperbolic for every $\alpha\in E^{\sigma}=R$, and then applies \cite[Proposition 1.3.1]{bayer2014embeddings}. See the proof of \cite[Lemma 2.1.1]{bayer2014embeddings}.
\end{proof}

The next result characterizes maximal tori of an orthogonal group of type $D_n$ over a local field in terms of the cohomological invariants of $(E,\sigma)$. Recall that every such torus is of the form $U(E,\sigma)$ for some \'etale algebra with involution $(E,\sigma)$ such that $\on{rank}_kE=2\on{rank}_kE^{\sigma}=2n$. Following \cite[\S 2]{bayer2014embeddings}, we write $\Sigma_k$ for the set of places of $v$, and we let $\Sigma_k^{\on{split}}(E)$ be the set of $v\in \Sigma_k$ such that all places of $E^{\sigma}$ above $v$ split in $E$. We denote by $\rho_v$ the number of places of $E^{\sigma}$ above $v$ that ramify in $E$.

\begin{prop}\label{toriorthogonal}
Let $k$ be a global field, let $v$ be a place of $k$, let $(V,q)$ be a quadratic space of dimension $2n$ over $k$, and let $(E,\sigma)$ be an \'etale $k$-algebra with involution such that $\on{rank}_kE=2\on{rank}_kE^{\sigma}=2n$. 
\begin{enumerate}[label=(\alph*)]
    \item\label{toriorthogonal1} If $v\in \Sigma_k^{\on{split}}(E)$, then $(E_v,q_v)$ is realizable if and only if $q_v$ is hyperbolic.
    \item\label{toriorthogonal2} If $v\notin \Sigma_k^{\on{split}}(E)$ is a finite place, then $(E_v,q_v)$ is realizable if and only if $\on{disc}(q_v)=\on{disc}(E_v)$.
    \item\label{toriorthogonal3} If $v$ is a real place, then $(E_v,q_v)$ is realizable if and only if the signature of $q_v$ is of the form $(2r'+\rho_v,2s'+\rho_v)$ for some $r',s'\geq 0$.
\end{enumerate}
\end{prop}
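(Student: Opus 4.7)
The plan is to handle each of the three cases by analyzing the local structure of $(E_v,\sigma)$ and applying the classification of non-degenerate quadratic forms over the relevant completion $k_v$, then appealing to the corresponding trace-form computations in \cite{bayer2014embeddings}.

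For part \ref{toriorthogonal1}, the hypothesis $v\in \Sigma_k^{\on{split}}(E)$ means that every place of $(E^{\sigma})_v$ above $v$ splits in $E_v$, so $(E_v,\sigma)$ decomposes as a product of factors of the form $(R_w\times R_w,\on{swap})$ where $R_w=(E^{\sigma})_w$. Globally this lets us write $(E_v,\sigma)\cong(R\times R,\on{swap})$ with $R=(E^{\sigma})_v$, and a direct application of \Cref{switch} shows that realizability is then equivalent to $q_v$ being hyperbolic.

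For part \ref{toriorthogonal2}, necessity of $\on{disc}(q_v)=\on{disc}(E_v)$ is the local specialization of \cite[Lemma 1.3.2]{bayer2014embeddings}. Sufficiency rests on the fact that quadratic forms over a non-archimedean local field are classified by dimension, discriminant, and Hasse-Witt invariant; one must show that as $\alpha\in (E^{\sigma})_v$ varies, the trace form $q_\alpha$ attains both possible Hasse-Witt invariants compatible with the fixed dimension $2n$ and discriminant $\on{disc}(E_v)$. The assumption $v\notin \Sigma_k^{\on{split}}(E)$ supplies at least one non-split factor of $(E_v,\sigma)$, and one uses this factor to adjust the Hasse-Witt invariant by varying the corresponding component of $\alpha$, matching any target $q_v$ with the correct discriminant.

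For part \ref{toriorthogonal3}, I would compute the signature of $q_\alpha$ factor by factor over $\R$: each ramified real place of $(E^{\sigma})_v$ corresponds to a factor $(\C,\on{conj})$ that contributes $(2,0)$ or $(0,2)$ to the signature of $q_\alpha$ according to the sign of $\alpha$ at that place, split real places contribute hyperbolic rank-$2$ factors, and complex places of $(E^{\sigma})_v$ contribute hyperbolic rank-$4$ factors. Summing over the $\rho_v$ ramified real contributions yields precisely the family of achievable signatures described in the statement, and the signature classification of real quadratic forms closes the case.

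The principal obstacle is the sufficiency direction in part \ref{toriorthogonal2}: showing that the Hasse-Witt invariant of $q_\alpha$ is not forced by dimension and discriminant requires an explicit manipulation of the trace form at a non-split factor, and in practice the cleanest route is to quote or adapt the corresponding statement from \cite[\S 2]{bayer2014embeddings} verbatim.
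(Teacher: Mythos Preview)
Your proposal is correct and follows essentially the same route as the paper, which simply cites \cite[Lemma 2.1.1]{bayer2014embeddings} for \ref{toriorthogonal1}, \cite[Lemma 1.3.2 and Proposition 2.2.1]{bayer2014embeddings} for \ref{toriorthogonal2}, and \cite[Proposition 2.3.2]{bayer2014embeddings} for \ref{toriorthogonal3}, whereas you unpack what those citations actually say. In particular, your plan for \ref{toriorthogonal2} (vary $\alpha$ at a non-split factor to change the Hasse invariant, then invoke the local classification of quadratic forms) is precisely the content of \cite[Proposition 2.2.1]{bayer2014embeddings}, and your factor-by-factor signature count in \ref{toriorthogonal3} is the computation underlying \cite[Proposition 2.3.2]{bayer2014embeddings}.
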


\begin{proof}
\ref{toriorthogonal1} This is \cite[Lemma 2.1.1]{bayer2014embeddings}.

\ref{toriorthogonal2} If $(E_v,q_v)$ is realizable, then $\on{disc}(q_v)=\on{disc}(E_v)$ by \cite[Lemma 1.3.2]{bayer2014embeddings}. Assume that $\on{disc}(q_v)=\on{disc}(E_v)$. By \cite[Proposition 2.2.1]{bayer2014embeddings} there exists $\alpha\in (E^{\sigma})^{\times}$ such that $w(q_{\alpha})=w(q_v)$. By \cite[Lemma 1.3.2]{bayer2014embeddings}, we have $\on{disc}(q_{\alpha})=\on{disc}(E_v)$, hence $\on{disc}(q_{\alpha})=\on{disc}(q_v)$. It follows that $q_v$ and $q_{\alpha}$ have the same rank, discriminant and Hasse invariant, hence they are isometric. By \cite[Proposition 1.3.1]{bayer2014embeddings} $(E_v,q_{\alpha})$ is realizable, hence so is $(E_v,q_v)$.

\ref{toriorthogonal3} This is \cite[Proposition 2.3.2]{bayer2014embeddings}.
\end{proof}

Let $(E,\sigma)$ be an \'etale algebra of degree $2n$ with involution over $k$, and write $E=K_1\times\dots\times K_r$, where $K_i/k$ is a field extension, and set $I:=\set{1,\dots,r}$. We start by assuming that $\sigma(K_i)=K_i$ for every $i\in I$. For every $i\in I$, we let $F_i$ be the fixed field of $\sigma$ on $K_i$, so that $E^{\sigma}=F_1\times\dots\times F_r$ and $[K_i:F_i]=2$.\footnote{This is not the most general case, as it implies that $E$ has no factors $F\times F$, where $\sigma$ acts by switching the two factors. However, thanks to \Cref{noswitch}, considering this case will be enough for our purposes.} As in \cite[\S 1.3]{bayer2014embeddings}, we say that $(E,q)$ is realizable if $\on{O}(V,q)$ contains a maximal torus of the form $U(E,\sigma)$. 

Following \cite[\S 3.2]{bayer2014embeddings}, we let $\mc{C}(E,q)$ be the set of collections $(q_i^v)$ of quadratic spaces over $k_v$ satisfying conditions (i) - (iii) of \cite[Proposition 3.1.3]{bayer2014embeddings}:
\begin{enumerate}[label=(\roman*)]
    \item\label{bayer1} for all $i\in I$ and $v\in \Sigma_k$, $(K_v,q^v_i)$ is realizable;
    \item\label{bayer2} for all $v\in \Sigma_k$, $q_v\cong q^v_1\oplus\dots \oplus q^v_r$;
    \item\label{bayer3} for all $i\in I$, $w(q_i^v)=0$ for all but finitely many $v\in \Sigma_k$.
\end{enumerate}
For $C=(q_i^v)\in \mc{C}(E,q)$ and $i\in I$, set \[S_i(C):=\set{v\in \Sigma_k: w(q_i^v)=1}.\footnote{In \cite[\S 3.2]{bayer2014embeddings}, the symbol $\Sigma_k'$ is used instead of $\Sigma_k$. This is just a typographical error.}\] We say that $(q_i^v)\in \mc{C}(E,q)$ is connected if for all $i\in I$ such that $|S_i(C)|$ is odd, there exist $j\in I$ such that $j\neq i$ and $|S_j(C)|$ is odd, a sequence $i=i_1,\dots, i_m=j$ in $I$ such that for every $t=1,\dots,m-1$ there exists $v\in\Sigma_k$ that does not split in $K_{i_t}$ and in $K_{i_{t+1}}$. We say that $\mc{C}(E,q)$ is connected if it contains a connected element.

The following is \cite[Theorem 3.2.1]{bayer2014embeddings}. It plays an important role in the proof of \Cref{mainthm}, and in particular in the proof of \Cref{trivialclifford}.

\begin{thm}[Bayer-Fluckiger]
    Let $k$ be a global field of characteristic different from $2$, let $(V,q)$ be a $2n$-dimensional quadratic space over $k$, and let $(E,\sigma)$ be an \'etale algebra with involution such that $\on{rank}_kE=2\on{rank}_kE^{\sigma}=2n$. Then:
    \begin{enumerate}[label=(\alph*)]
        \item $(E,q)$ is realizable everywhere locally if and only if $\mc{C}(E,q)$ is non-empty;
        \item $(E,q)$ is realizable if and only if $\mc{C}(E,q)$ is connected.
    \end{enumerate}
\end{thm}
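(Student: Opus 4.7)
The plan is to reduce both parts of the theorem to the local classification of realizable pairs (\Cref{toriorthogonal}), combined with Hasse--Minkowski and Hilbert reciprocity for the Hasse invariant. Under the standing hypothesis that each $K_i$ is $\sigma$-stable, one has $U(E,\sigma) = \prod_i U(K_i,\sigma|_{K_i})$, so $(E,q)$ is realizable (locally or globally) if and only if there is an orthogonal decomposition $q \cong q_1 \perp \cdots \perp q_r$ with each $(K_i,q_i)$ realizable. This reduces the problem, in both settings, to assembling such a decomposition with prescribed invariants.

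For part (a), the \emph{if} direction is immediate: any $(q_i^v) \in \mc{C}(E,q)$ gives the required local decomposition witnessing local realizability of $(E_v,q_v)$. For the \emph{only if} direction, pick for each $v$ a local decomposition witnessing local realizability; condition (iii) is then automatic, since at almost all $v$ the algebra $E\otimes_k k_v$ is unramified and one may take the $q_i^v$ hyperbolic (applying \Cref{toriorthogonal}\ref{toriorthogonal1}, since almost every $v$ belongs to $\Sigma_k^{\mathrm{split}}(E)$).

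For part (b), the \emph{only if} direction follows from Hilbert reciprocity. A global decomposition $q \cong q_1 \perp \cdots \perp q_r$ with $(K_i,q_i)$ realizable restricts to an element $(q_i^v)$ of $\mc{C}(E,q)$, and for each fixed $i$ the sum $\sum_v w(q_i^v)$ vanishes in $\Z/2$ by Hilbert reciprocity. Hence every $|S_i(C)|$ is even, the hypothesis on connectedness is vacuous, and $\mc{C}(E,q)$ is connected.

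The main obstacle, and the heart of the proof, is the \emph{if} direction of (b). Given a connected element $C = (q_i^v) \in \mc{C}(E,q)$, the strategy is to modify the $q_i^v$ so that for every $i$ the family $\{w(q_i^v)\}_v$ satisfies Hilbert reciprocity (i.e.\ $|S_i(C)|$ becomes even), while preserving both the local realizability of every $(K_i\otimes_k k_v, q_i^v)$ and the local compatibility $q_v \cong q_1^v \perp \cdots \perp q_r^v$. Once achieved, Hasse--Minkowski applied to each $K_i$ produces a global form $q_i$ with the prescribed local completions, the orthogonal sum $q_1 \perp \cdots \perp q_r$ has the same local invariants as $q$ at every place, and a second application of Hasse--Minkowski identifies it with $q$. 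The modification is carried out by pairing up the odd-support indices along the chains provided by the connectedness hypothesis: for consecutive indices $i_t,i_{t+1}$ in a chain, at a common non-split place $v_t$ one simultaneously flips $w(q_{i_t}^{v_t})$ and $w(q_{i_{t+1}}^{v_t})$. By \Cref{toriorthogonal}\ref{toriorthogonal2}--\ref{toriorthogonal3} (together with \cite[Proposition 2.2.1]{bayer2014embeddings}), a non-split local place admits forms of either Hasse invariant realizing the required type, so the flipped $q_{i_t}^{v_t}$ remain realizable; and since the flips happen in pairs at the same place with discriminants unchanged, the total invariant of $q_1^{v_t}\perp \cdots \perp q_r^{v_t}$ is preserved. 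The delicate point is verifying that these coordinated flips can be performed along the entire chain consistently, so that all parities are simultaneously corrected---this is exactly what the definition of connectedness is engineered to guarantee.
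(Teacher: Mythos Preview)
This theorem is not proved in the paper at all: it is quoted from \cite[Theorem 3.2.1]{bayer2014embeddings} and used as a black box in the proof of \Cref{trivialclifford}. So there is no in-paper argument to compare against; what you have written is an attempted reconstruction of Bayer-Fluckiger's proof, and I can only comment on its internal correctness.

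Your sketch of part~(a) contains a genuine error. You assert that ``almost every $v$ belongs to $\Sigma_k^{\mathrm{split}}(E)$'', and hence that one may take each $q_i^v$ hyperbolic at almost all $v$. This is false. A place $v$ lies in $\Sigma_k^{\mathrm{split}}(E)$ only when \emph{every} place of $E^{\sigma}$ above $v$ splits in $E$; for a typical quadratic extension $K_i/F_i$ this fails at infinitely many places (half of the primes of $F_i$ are inert in $K_i$). You have conflated ``$E/k$ is unramified at $v$'' (true almost everywhere) with ``$v\in\Sigma_k^{\mathrm{split}}(E)$''. Consequently condition~(iii) is not automatic: at each finite $v\notin\Sigma_k^{\mathrm{split}}(K_i)$, \Cref{toriorthogonal}\ref{toriorthogonal2} allows $q_i^v$ of either Hasse invariant, so an arbitrary choice of local decompositions need not satisfy~(iii). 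One must instead \emph{choose} the $q_i^v$ carefully at almost all places (e.g.\ with $\on{disc}(q_i^v)=\on{disc}(K_i\otimes_kk_v)$ and $w(q_i^v)=0$) and then verify that the resulting orthogonal sum still has the invariants of $q_v$.

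For part~(b), your outline captures the correct mechanism (pairwise Hasse-invariant flips at common non-split places, followed by \cite[Theorem 6.10]{scharlau1985quadratic} to globalize each $q_i$), but the ``delicate point'' you flag is a real gap, not a formality. Each flip along a chain changes the parity of exactly the two endpoint indices; thus the parity of $|\{i\in A:|S_i(C)|\text{ odd}\}|$ is invariant for every connected component $A$ of $I$. Your procedure terminates only if this count is \emph{even} in each component, whereas the definition of connectedness literally only gives that it is either $0$ or $\geq 2$. One needs an additional reciprocity argument---using that for $i\in A$ and $j\notin A$ every place splits in $K_i$ or in $K_j$, so that the ``complementary'' pieces $q_j^v$ are forced to be hyperbolic---to see that the per-component parity is indeed even. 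This is where the actual work in \cite{bayer2014embeddings} lies, and it is not supplied by the sentence ``this is exactly what the definition of connectedness is engineered to guarantee''.
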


\section{Classification over local fields}
Let $k$ be a local field (of characteristic $\neq 2,3$), and let $G$ be a semisimple $k$-group of type $F_4$. We have $G=\on{Aut}(J)$ for some Albert $k$-algebra $J$. In this section we classify the maximal $k$-tori in $G$.

\subsection{Non-archimedean fields}

Assume first that $k$ is a local non-archimedean field. By \cite[\S 5.8 (iv)]{springer2000octonions}, there is a unique Albert algebra over $k$, that is, $J=J_s$ and $G=G_s$. In particular, $J=J(L,M)$ for every twisted composition $(L,M,q,\cdot^{*2})$ over $k$. The following proposition is now an immediate consequence of \Cref{relation}, but we record it for future reference.

\begin{prop}\label{nonarchimedean}
Let $k$ be a local non-archimedean field, let $G$ be the unique $k$-group of type $F_4$, and let $\alpha$ be a datum. Then $(G,\alpha)$ is realizable.
\end{prop}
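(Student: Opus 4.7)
The plan is to deduce the proposition almost immediately by combining two inputs: the uniqueness of the Albert algebra over a local non-archimedean field of characteristic different from $2$ and $3$, and the surjectivity of the datum-to-torus construction established in \Cref{relation}\ref{relation2}.

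First I would record the classification. By \cite[\S 5.8 (iv)]{springer2000octonions}, over $k$ there is (up to isomorphism) a unique Albert algebra, namely the split one $J_s$. Since every $k$-group of type $F_4$ is of the form $\on{Aut}(J)$ for an Albert algebra $J$, this forces $G \cong G_s = \on{Aut}(J_s)$; moreover, for any twisted composition $(L,M,q,\cdot^{*2})$ over $k$, the Springer construction $J(L,M)$ is necessarily isomorphic to $J_s$. So there is essentially only one $F_4$-context to work in.

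Next, given an arbitrary datum $\alpha = (L,E,\sigma,\beta)$, I would invoke \Cref{relation}\ref{relation2} to produce some $k$-group $G'$ of type $F_4$ together with a maximal $k$-torus $T'$ of $G'$ whose associated datum is isomorphic to $\alpha$. By the uniqueness step above there is an isomorphism $G' \xrightarrow{\sim} G$; transporting $T'$ along this isomorphism yields a maximal $k$-torus of $G$ of type $\alpha$, so $(G,\alpha)$ is realizable.

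There is no genuine obstacle here: all of the substantive work has been absorbed into \Cref{relation} (which handles the passage between data and maximal tori over an arbitrary base field) and into the classical classification of Albert algebras over local non-archimedean fields. The role of this proposition is simply to package these two inputs into a clean local realizability statement that can be cited in the proof of the global local-global principle \Cref{mainthm}.
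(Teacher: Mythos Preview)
Your proposal is correct and follows exactly the paper's approach: the paper simply notes that over a non-archimedean local field there is a unique Albert algebra (by \cite[\S 5.8 (iv)]{springer2000octonions}), so $G=G_s$ and $J=J(L,M)$ for every twisted composition, and then records the proposition as an immediate consequence of \Cref{relation}. There is nothing to add.
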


\subsection{The field of real numbers}
Assume now that $k=\mathbb R$. We start by recalling the classification of twisted compositions over $\R$. The group $H^3(\R,\Z/3\Z)$ is $3$-torsion because so is $\Z/3\Z$, and is $2$-torsion because so is $\on{Gal}(\R)\cong \Z/2\Z$, hence $H^3(\R,\Z/3\Z)=0$. By \cite[Proposition 40.16]{knus1998involutions}, every twisted composition arises, up to similitude, from a symmetric composition of dimension $8$. By \cite[\S 1.10 (i)]{springer2000octonions}, the only two symmetric compositions of dimension $8$ are the split octonions $\O$ and the non-split octonions (Cayley numbers) $C$.

Twisted compositions over $\R^3$ and $\R\times\C$ are easily classified using \cite[\S 36.29]{knus1998involutions}. The classification has been written down explicitly in \cite[Example 3.23]{fiori2019rational}. If $L=\R^3$, then $M=(\O,\O,\O)$ or $(C,C,C)$. The quadratic form on $M$ is defined as the usual quadratic form on every factor $\O$ or $C$.
If $L=\R\times \C$, then $M=V\times (V\otimes\C)$, where $V$ is a quadratic space of signature $(1,7)$ or $(5,3)$.\footnote{In \cite{fiori2019rational}, the signatures are written with the negative part first, and the positive part last.}

There are three isomorphism classes of Albert $\R$-algebras: the split algebra $J_s$ defined in \Cref{constructionH}, 
\[J_1:=\set{\begin{pmatrix} \xi_1 & c_3 & \cl{c_2} \\ \cl{c_3} & \xi_2 & c_1 \\ c_2 & \cl{c_1} & \xi_3 \end{pmatrix}: \xi_i\in \R, c_i\in C},\]
and
\[J_c:=\set{\begin{pmatrix} \xi_1 & c_3 & -\cl{c_2} \\ \cl{c_3} & \xi_2 & c_1 \\ c_2 & -\cl{c_1} & \xi_3 \end{pmatrix}: \xi_i\in \R, c_i\in C};\] see \cite[\S 5.8 (ii)]{springer2000octonions}. 
If $J=J(L,M)$  the quadratic form $Q$ on $J$ is related to the quadratic form $q:M\to L$ by the formula
\begin{equation}\label{traceform}Q(l,m)=\on{Tr}_{L/k}(l^2)+\on{Tr}_{L/k}(q(m));\end{equation} see \cite[Theorem 38.6]{knus1998involutions}.
When $L=\R^3$, the previous formula reduces to \cite[(5.3)]{springer2000octonions}. In particular, using the above presentations of $J_s,J_1$ and $J_c$, we see that the signatures of the quadratic forms on $J_s,J_1$ and $J_c$ are $(15,12), (10,17)$ and $(27,0)$, respectively. 

Let $G_s:=\on{Aut}(J_s)$, $G_1:=\on{Aut}(J_1)$ and $G_c:=\on{Aut}(J_c)$. The groups $G_s,G_1$ and $G_c$ are representatives of the three isomorphism classes of $\R$-forms of type $F_4$. The group $G_s$ is split, $G_1$ has real rank $1$ (i.e. a maximal $\R$-split subtorus of $G_1$ has rank $1$), and $G_c$ is anisotropic (i.e. it does not contain any nontrivial $\R$-split subtorus).

\begin{lemma}\label{realalbert}
Let $\Gamma=(L,M,q,\cdot^{*2})$ be a twisted composition over $\R$, and let $J:=J(L,M)$. Assume first that $L=\R\times\R\times\R$.
\begin{enumerate}[label=(\alph*)]
    \item\label{realalbert1} If $M=(\O,\O,\O)$, then $J=J_s$.
    \item\label{realalbert2} If $M=(C,C,C)$ and $q$ is positive definite, then $J=J_c$.
    \item\label{realalbert3} If $M=(C,C,C)$ and $q$ is positive definite on one factor and negative definite in the other two, then $J=J_1$.
\end{enumerate}
Assume now that $L=\R\times \C$, and write $M=V\times W$, where $V$ is over $\R$ and $W$ is over $\C$.
\begin{enumerate}[label=(\alph*')]
    \item\label{realalbert4} If $(V,q|_V)$ has signature $(5,3)$, then $J=J_s$.
    \item\label{realalbert5} If $(V,q|_V)$ has signature $(7,1)$, then $J=J_1$.
\end{enumerate}
\end{lemma}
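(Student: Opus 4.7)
The plan is to distinguish the three real forms of the Albert algebra by the signature of their quadratic form $Q$, and to compute this signature explicitly for $J=J(L,M)$ using formula \eqref{traceform}. Since the three signatures $(15,12)$, $(10,17)$, and $(27,0)$ of $J_s$, $J_1$, and $J_c$ recorded just above the lemma are pairwise distinct, matching the signature of $Q$ identifies $J$ uniquely, and the lemma follows in each case.

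First I would separate the contributions. The $L$-part of $Q$ is $\on{Tr}_{L/\mathbb{R}}(l^2)$, which is $\xi_1^2+\xi_2^2+\xi_3^2$ of signature $(3,0)$ when $L=\mathbb{R}^3$, and $a^2+2\operatorname{Re}(z^2)$ of signature $(2,1)$ when $L=\mathbb{R}\times\mathbb{C}$. The $M$-part is $\on{Tr}_{L/\mathbb{R}}(q(m))$, and it decomposes according to the $L$-module structure on $M$. When $L=\mathbb{R}^3$ and $M=(V_1,V_2,V_3)$ with each $V_i$ either $\mathbb{O}$ or $C$, this gives a sum of signed octonion or Cayley norms whose signs $\epsilon_i\in\{\pm1\}$ are constrained by $\epsilon_1\epsilon_2\epsilon_3=1$, as one sees by unwinding the composition identity $q(c)q(c^{*2})=N_{L/k}(q(c))$ using \eqref{formulas4}. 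When $L=\mathbb{R}\times\mathbb{C}$ and $M=V\times(V\otimes\mathbb{C})$, the contribution splits as $q|_V$ (a real form with signature equal to that of $V$) plus $\on{Tr}_{\mathbb{C}/\mathbb{R}}$ of the complex form on $V\otimes\mathbb{C}$; the latter is hyperbolic of signature $(8,8)$, because the trace form of any non-degenerate complex quadratic form of complex dimension $n$ is hyperbolic of signature $(n,n)$.

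Adding these two contributions in each of the five cases produces exactly one of the signatures $(15,12)$, $(10,17)$, or $(27,0)$, which by comparison identifies $J$. In (a) the split octonion norms give a hyperbolic $(12,12)$ on $M$, totalling $(15,12)$ and hence $J\cong J_s$; in (b) the three positive Cayley norms give $(24,0)$ on $M$, totalling $(27,0)$ and hence $J\cong J_c$; in (c) the mixed signs forced by $\epsilon_1\epsilon_2\epsilon_3=1$ produce the remaining signature, forcing $J\cong J_1$. For $L=\mathbb{R}\times\mathbb{C}$, case (a') yields $(2,1)+(5,3)+(8,8)=(15,12)$, so $J\cong J_s$; and case (b') yields the signature of $J_1$, so $J\cong J_1$. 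The main obstacle is keeping the sign conventions of the Springer construction consistent with the presentations of $J_s$, $J_1$, $J_c$; once those are fixed, the proof is simply arithmetic of signatures.
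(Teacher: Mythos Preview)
Your proposal is correct and follows essentially the same approach as the paper: both compute the signature of $Q$ on $J(L,M)$ via formula \eqref{traceform}, splitting into the $L$-contribution (signature $(3,0)$ or $(2,1)$) and the $M$-contribution (with the complex piece contributing a hyperbolic $(8,8)$), and then identify $J$ by matching against the three known signatures $(15,12)$, $(10,17)$, $(27,0)$. The only minor difference is that the paper dispatches case (a) by observing directly that the Springer construction of a split twisted composition is split, rather than computing its signature; your additional remark on the sign constraint $\epsilon_1\epsilon_2\epsilon_3=1$ is not in the paper but is a harmless consistency check.
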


\begin{proof}
The proof proceeds by comparing the signatures of $(M,q)$ and $(J,Q)$ using (\ref{traceform}).

If $L=\R^3$, the signature of $(L,Q|_L)$ is $(3,0)$.

\ref{realalbert1} If $\Gamma$ is split, so is $J(L,M)$.

\ref{realalbert2} The signature of $(M,q)$ is $(24,0)$, hence the signature of $(J,Q)$ is $(27,0)$, and so $J=J_c$.

\ref{realalbert3} The signature of $(M,q)$ is $(8,16)$, hence the signature of $(J,Q)$ is $(10,17)$ and $J=J_1$.

If $(U,q')$ is a complex quadratic space of complex dimension $d$, then $(U,\on{Tr}_{\C/\R}(q'))$ is a real quadratic space of signature $(d,d)$. It follows that when $L=\R\times\C$, the signature of $(L,Q|_L)$ is $(2,1)$, and the signature of $(W,Q|_W)$ is $(8,8)$. 

\ref{realalbert4} If the signature of $(V,q|_V)$ is $(5,3)$, the signature of $(J,Q)$ is $(2,1)+(5,3)+(8,8)=(15,12)$, so $J=J_s$.


\ref{realalbert5} If the signature of $(V,q|_V)$ is $(1,7)$, the signature of $(J,Q)$ is $(10,17)$, so $J=J_1$.
\end{proof}

Recall that we denote by $\rho_v$ the number of field factors of $E^{\sigma}$ above $v$ that ramify in $E$. If $(E,\sigma)$ is an \'etale $k$-algebra with involution such that $\on{rank}_kE=\on{rank}_kE^{\sigma}=2n$, $(V,q)$ is a quadratic space of dimension $2n$ and $v$ real place of $k$, by \Cref{toriorthogonal}\ref{toriorthogonal3} $(E_v,q_v)$ is realizable if and only if the signature of $q_v$ is of the form $(\rho_v+2s,\rho_v+2r)$ for some $r,s\geq 0$. We want to apply this result to $k=L$. Note that $L_v$ is never a field, but we may apply the result on each factor of $L_v$ separately.

\begin{prop}\label{realcond}
Let $k$ be a global field, let $v$ be a real place of $k$, let $G$ be a $k$-group of type $F_4$, and let $\alpha=(L,E,\sigma,\beta)$ be a datum. Then $(G_v,\alpha_v)$ is realizable over $k_v$ if and only if one of the following is true:
\begin{enumerate}[label=(\roman*)]
    \item\label{realcond1} $G_v$ is split and either $L_v=\R^3$ and $\rho_v$ is even, or $L_v=\R\times\C$ and $\rho_v$ is odd;
    \item\label{realcond2} $G_v$ is anisotropic, $L_v=\R^3$ and $\rho_v=0$;
    \item\label{realcond3} $G_v$ has real rank $1$ and either $L_v=\R^3$ and $\rho_v$ is even, or $L_v=\R\times \C$ and $\rho_v=1$.
\end{enumerate}
\end{prop}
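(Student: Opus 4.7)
The plan is to reduce the realizability of $(G_v,\alpha_v)$ to a question about embeddings of \'etale algebras with involution in orthogonal groups over each factor of $L_v$, and then to combine the local classification of such embeddings with the classification of twisted compositions and Albert algebras over $\R$ already recalled in this section.

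More precisely, by \Cref{relation}\ref{relation2}, $(G_v,\alpha_v)$ is realizable if and only if one can find a twisted composition $\Gamma_v=(L_v,M_v,q_v,\cdot^{*2})$ over $\R$ such that the Springer construction $J(L_v,M_v)$ is isomorphic to the Albert algebra $J_v$ of $G_v$, together with a maximal torus of $\on{Aut}(\Gamma_v)^\circ$ of type $(E_v,\sigma_v,\beta_v)$. The existence of such a torus is governed by \Cref{relationD}\ref{relationD2}: it amounts to $(E_v,q_v)$ being realizable as a maximal $L_v$-torus of $\on{O}(M_v,q_v)$. Because $L_v$ is a product of copies of $\R$ and at most one copy of $\C$, realizability over $L_v$ decomposes factor by factor. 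On each real factor I would apply \Cref{toriorthogonal}\ref{toriorthogonal3} to convert the condition into a signature condition on the restriction of $q_v$ involving the local ramification count contributing to $\rho_v$; on a complex factor (present only when $L_v=\R\times\C$) realizability is automatic, since over $\C$ all nondegenerate quadratic forms of a given rank are isomorphic.

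The second ingredient is the enumeration of twisted compositions over $\R$ together with \Cref{realalbert}. For $L_v=\R^3$ the possibilities for $(M_v,q_v)$ are the triple of split octonions (forcing the standard $(4,4)$ signature on each factor and giving $J_s$), the triple of Cayley algebras with $q_v$ positive definite on every factor (giving $J_c$), and the triple of Cayley algebras with $q_v$ positive definite on one factor and negative definite on the other two (giving $J_1$). For $L_v=\R\times\C$, one has $V$ of signature $(5,3)$ giving $J_s$, or signature $(1,7)$ giving $J_1$, while $J_c$ cannot occur. In each admissible $(L_v,J_v)$ pair one reads off the forced signature of $q_v$ on each real factor and matches it against \Cref{toriorthogonal}\ref{toriorthogonal3}, thereby producing precisely the parity and range of values of $\rho_v$ listed in \ref{realcond1}--\ref{realcond3}.

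The main technical obstacle is the delicate interplay between the rigidity imposed by the twisted composition classification (which severely restricts the signature of $q_v$ once $L_v$ and $J_v$ are fixed) and the flexibility of \Cref{toriorthogonal}\ref{toriorthogonal3} (which allows a range of local ramification counts per factor compatible with a given signature). Besides verifying that the stated conditions on $\rho_v$ are necessary, one must exhibit, for every $\rho_v$ in the claimed range, an actual \'etale $L_v$-algebra with involution $(E_v,\sigma_v)$ that realizes the corresponding torus; the casework is not deep but must be carried out uniformly across all three real forms of $F_4$ and both shapes of $L_v$.
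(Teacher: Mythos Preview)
Your approach is essentially the same as the paper's: both enumerate the real twisted compositions via \Cref{realalbert}, read off which Albert algebra each produces, and then reduce realizability of $(G_v,\alpha_v)$ to the signature criterion of \Cref{toriorthogonal}\ref{toriorthogonal3} applied to the real factors of $L_v$ (with the complex factor, when present, imposing no constraint). One small wording issue: in your last paragraph you speak of ``exhibiting, for every $\rho_v$ in the claimed range, an actual \'etale $L_v$-algebra with involution $(E_v,\sigma_v)$'', but $(E_v,\sigma_v)$ is part of the \emph{given} datum $\alpha_v$ and hence $\rho_v$ is fixed---what must be checked in the sufficiency direction is only that this given $(E_v,\sigma_v)$ is realizable in $\on{O}(M_v,q_v)$ for one of the twisted compositions $\Gamma_v$ with $J(L_v,M_v)\cong J_v$, and that is precisely what the signature criterion delivers.
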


\begin{proof}
\ref{realcond1} When $G_v$ is split, by \Cref{realalbert}\ref{realalbert1}, $\on{SO}_8^3$ is a subgroup of $G_v$. Since $0\leq \rho_v\leq 4$ and the quadratic form defining $\on{SO}_8$ has signature $(4,4)$, it follows from \Cref{toriorthogonal}\ref{toriorthogonal3} that $(G_v,\alpha_v)$ is realizable if and only if $\rho_v$ is even.

\ref{realcond2} When $G_v$ is anisotropic, by \Cref{realalbert} $L_v=\R^3$. Furthermore, the signature of $(M_v,q_v)$ is $(24,0)$, hence by \Cref{toriorthogonal}\ref{toriorthogonal3} $(G_v,\alpha_v)$ is realizable if and only if $\rho_v=0$.

\ref{realcond3} When $G_v$ has real rank $1$, by \Cref{realalbert} either $L_v=\R^3$ and $(M,q)$ has signature $(8,16)$, or $L_v=\R\times \C$ and the real factor of $(M_v,q_v)$ has signature $(7,1)$. By \Cref{toriorthogonal}\ref{toriorthogonal3}, in the first case $(G_v,\alpha_v)$ is realizable if and only if $\rho_v$ is even, and in the second $(G_v,\alpha_v)$ is realizable if and only if $\rho_v\leq 1$.

By \Cref{realalbert}, we have $L_v=\R^3$ when $H_v=\on{Spin}_8$ or $\on{Spin}(8)$, and we have $L_v=\R\times \C$ when $H_v=\on{Spin}(7,1)$ or $\on{Spin}(3,5)$. 
\end{proof}

We conclude the section by describing the maximal tori in real forms of type $F_4$ explicitly, that is, without reference to a datum $\alpha$. We will not use this result in the sequel.

\begin{prop}\label{realtori}
Let $k=\R$, let $G$ be a simple $\R$-group of type $F_4$, and let $T$ be an $\R$-torus of rank $4$. Then $T$ embeds in $G$ if and only if $\on{rank}_{\R}T\leq \on{rank}_{\R}G$, that is, if and only if one of the following is true:
\begin{enumerate}[label=(\alph*)]
    \item\label{realtori1} $G=G_s$;
    \item\label{realtori2} $G=G_c$ and $T=R^{(1)}_{\C/\R}(\G_m)^4$;
    \item\label{realtori3} $G=G_1$ and $T$ is isomorphic to one of \[\G_m\times R^{(1)}_{\C/\R}(\G_m)^3,\quad R_{\C/\R}(\G_m)\times R^{(1)}_{\C/\R}(\G_m)^2,\quad R^{(1)}_{\C/\R}(\G_m)^4.\]
\end{enumerate}
\end{prop}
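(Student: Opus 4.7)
The plan is to treat necessity and sufficiency separately. For necessity, any embedding $T\hookrightarrow G$ sends a maximal $\R$-split subtorus $S$ of $T$ into an $\R$-split subgroup of $G$, forcing $\on{rank}_\R T=\on{rank}_\R S\leq \on{rank}_\R G$. Inspecting the three cases, these are precisely the pairs $(G,T)$ with $\on{rank}_\R T\leq \on{rank}_\R G$, noting that the rank-$4$ $\R$-tori are classified, via the $\on{Gal}(\C/\R)$-action on the character lattice, as products $\G_m^a\times R_{\C/\R}(\G_m)^b\times (R^{(1)}_{\C/\R}(\G_m))^c$ with $a+2b+c=4$ and $\R$-rank equal to $a+b$.

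For sufficiency, the plan is to construct, for each pair $(G,T)$ satisfying the rank condition, a datum $\alpha=(L,E,\sigma,\beta)$ over $\R$ such that $(G,\alpha)$ is realizable in the sense of \Cref{type} and such that the associated torus $T(\alpha)$ is isomorphic to $T$. Realizability of $(G,\alpha)$ is controlled by \Cref{realcond}, and the existence of a torus of type $\alpha$ in $G$ then follows from \Cref{relation}\ref{relation2}.

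The three cases proceed as follows. In case \ref{realtori1}, one enumerates the nine isomorphism types of rank-$4$ $\R$-tori and, for each, produces a datum with $L=\R^3$ and $\rho_v$ even (or $L=\R\times\C$ and $\rho_v$ odd) that gives rise to it; the flexibility here comes from the freedom to choose the signatures of the quadratic forms on each factor of $M$. In case \ref{realtori2}, \Cref{realcond}\ref{realcond2} forces $L=\R^3$ and $\rho_v=0$; by \Cref{realalbert}\ref{realalbert2} the associated twisted composition has positive definite $q$, so $H\cong \on{Spin}(8)$ is the compact spin group, whose unique maximal $\R$-torus up to isomorphism is $(R^{(1)}_{\C/\R}(\G_m))^4$. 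In case \ref{realtori3}, the allowed pairs $(L,\rho_v)$ of \Cref{realcond}\ref{realcond3} yield $H\cong \on{Spin}(5,3)$ or $\on{Spin}(7,1)$ via \Cref{realalbert}\ref{realalbert3} and \ref{realalbert5}, and the classical classification of Cartan subalgebras of $\mathfrak{so}(p,q)$ yields exactly the three listed tori.

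The main obstacle is the explicit identification of $T(\alpha)$ as an $\R$-torus directly from the definition \eqref{intrinsicF4}: one must unwind the effect of the trialitarian constraint $\Psi(x_2)=x_1\otimes 1$ in terms of the decomposition of $(E,\sigma)\otimes_\R L$ and match it against the product decompositions above. Once this is done, the matching of isomorphism types of $T(\alpha)$ with the listed tori in each case is a direct verification.
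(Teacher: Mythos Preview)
Your approach differs substantially from the paper's, and it has two genuine problems.

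\textbf{Comparison with the paper.} The paper does not touch the datum machinery here at all. For sufficiency it works entirely inside explicit spin subgroups of $G$ and uses low-rank exceptional isomorphisms. For $G_s$ it takes $\on{Spin}_8\subset G_s$ and runs through embeddings such as $\on{Spin}_4\times\on{Spin}_4$, $\on{Spin}(3,1)\times\on{Spin}(1,3)$, $\on{Spin}_2\times\on{Spin}_6$, together with $\on{Spin}_4\cong\on{SL}_2^2$, $\on{Spin}(3,1)\cong R_{\C/\R}(\on{SL}_2)$, $\on{Spin}_6\cong\on{SL}_4$, and also uses $\on{Spin}(3,5)\subset G_s$ from \Cref{realalbert}\ref{realalbert4}; this produces all nine torus types by inspection. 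For $G_1$ it uses the compact $H$ coming from \Cref{realalbert}\ref{realalbert3} (which is anisotropic, giving $(R^{(1)}_{\C/\R}\G_m)^4$) and then $\on{Spin}(7,1)\subset G_1$ from \Cref{realalbert}\ref{realalbert5} with the subgroups $\on{Spin}(3,1)\times\on{Spin}(4,0)$ and $\on{Spin}(1,1)\times\on{Spin}(6,0)$ to obtain the remaining two tori. This is quick and avoids computing $T(\alpha)$ altogether.

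\textbf{First problem.} Your proof defers precisely the step that carries the content in your route: you never compute $T(\alpha)$ from $(L,E,\sigma,\beta)$. Saying that unwinding $\Psi(x_2)=x_1\otimes 1$ is ``the main obstacle'' and that ``once this is done'' everything follows is not a proof; it is a promissory note. For the split case you would need nine such identifications, and the paper nowhere provides a lemma expressing $T(\alpha)$ as an explicit product of $\G_m$, $R_{\C/\R}\G_m$, and $R^{(1)}_{\C/\R}\G_m$ in terms of $(L,E,\sigma)$ that you could cite.

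\textbf{Second problem.} In case \ref{realtori3} you write that the relevant $H$ is $\on{Spin}(5,3)$ or $\on{Spin}(7,1)$, citing \Cref{realalbert}\ref{realalbert3} and \ref{realalbert5}. This is incorrect: \Cref{realalbert}\ref{realalbert3} has $L=\R^3$ and $M=(C,C,C)$ with $q$ definite on each factor, so $H$ embeds in $R_{\R^3/\R}(\on{SO}(8))=\on{SO}(8)^3$, which is anisotropic; hence $H$ is the \emph{compact} $\on{Spin}(8)$, not $\on{Spin}(5,3)$. The form $\on{Spin}(5,3)$ arises from \Cref{realalbert}\ref{realalbert4}, which lands in $J_s$, not in $J_1$. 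With the compact $H$ you only get the anisotropic torus, and the other two listed tori must be extracted from $\on{Spin}(7,1)$; your appeal to Cartan subalgebras of $\mathfrak{so}(5,3)$ is aimed at the wrong group.
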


\begin{proof}
    \ref{realtori1} We know that $G_s$ contains a subgroup isomorphic to $\on{Spin}_8$. We have an embedding $\on{Spin}_4\times \on{Spin}_4\hookrightarrow \on{Spin}_8$. Recall the exceptional isomorphism $\on{Spin}_4\cong \on{SL}_2\times \on{SL}_2$. It follows that every torus of the form $\G_m^a\times R^{(1)}_{\C/\R}(\G_m)^{4-a}$ arises as a maximal torus of $\on{Spin}_8$. 
    
    Consider the embedding $\on{Spin}(3,1)\times \on{Spin}(1,3)\hookrightarrow \on{Spin}_8$. We have the isomorphisms $\on{Spin}(3,1)\cong \on{Spin}(1,3)\cong R_{\C/\R}(\on{SL}_2)$, hence $\on{Spin}(3,1)$ and $\on{Spin}(1,3)$ contain subtori isomorphic to $R_{\C/\R}(\G_m)$. It follows that $R_{\C/\R}(\G_m)^2$ is a subtorus of $\on{Spin}_8$.
    
    Consider the embedding $\on{Spin}_2\times \on{Spin}_6\hookrightarrow \on{Spin}_8$. We have $\on{Spin}_2\cong \G_m$ and $\on{Spin}_6\cong \on{SL}_4$. The maximal tori in $\on{SL}_4$ have the form $R^{(1)}_{E/\R}(\G_m)$, where $E$ is an \'etale algebra of degree $4$ over $\R$. The subtori of $\on{SL}_4$ corresponding to $E=\C\times \C$ and $E=\C\times \R^2$ are $R_{\C/\R}(\G_m)\times R^{(1)}_{\C/\R}(\G_m)$ and $R_{\C/\R}(\G_m)\times \G_m$, respectively. It follows that $\on{Spin}_8$ contains subtori isomorphic to $\G_m\times R_{\C/\R}(\G_m)\times R^{(1)}_{\C/\R}(\G_m)$ and $R_{\C/\R}(\G_m)\times \G_m^2$.
    
    By \Cref{realalbert}\ref{realalbert4}, $G_s$ contains a subgroup isomorphic to $\on{Spin}(3,5)$. We have an embedding $\on{Spin}(3,1)\times \on{Spin}(0,4)$. We have seen that $\on{Spin}(3,1)$ contains a subtorus isomorphic to $R_{\C/\R}(\G_m)$. Since $\on{Spin}(0,4)$ is anisotropic, all its maximal subtori are isomorphic to $R^{(1)}_{\C/\R}(\G_m)^2$. It follows that $R_{\C/\R}(\G_m)\times R^{(1)}_{\C/\R}(\G_m)^2$ is a subgroup of $\on{Spin}(3,5)$.

    \ref{realtori2} Since $G_c$ is anisotropic, $T$ is also anisotropic, hence $T\cong R^{(1)}_{\C/\R}(\G_m)^4$.
    
    \ref{realtori3} Since $G_1$ has real rank $1$, only the listed possibilities can occur. Let $H$ be the connected component of the automorphism group of the twisted composition of \Cref{realalbert}\ref{realalbert3}. Then $H$ embeds in the anisotropic group $R_{\R^3/\R}(\on{SO}(8))=\on{SO}(8)^3$, hence it is anisotropic. It follows that $R^{(1)}_{\C/\R}(\G_m)^4$ embeds in $G_1$.
    
    By \Cref{realalbert}\ref{realalbert5}, $G_1$ contains a subgroup isomorphic to $\on{Spin}(7,1)$. We have an embedding $\on{Spin}(3,1)\times\on{Spin}(4,0)\hookrightarrow \on{Spin}(7,1)$. We have seen that  $\on{Spin}(3,1)$ contains a subtorus isomorphic to $R_{\C/\R}(\G_m)$, and that all maximal tori of $\on{Spin}(4,0)$ are isomorphic to $R^{(1)}_{\C/\R}(\G_m)^2$. It follows that $R_{\C/\R}(\G_m)\times R^{(1)}_{\C/\R}(\G_m)^2$ is a subgroup of $\on{Spin}(7,1)$, hence of $G_1$. 
    
    We have an embedding $\on{Spin}(1,1)\times \on{Spin}(6,0)\hookrightarrow \on{Spin}(7,1)$. Since $\on{Spin}(1,1)=\on{Spin}_2\cong \G_m$ and $\on{Spin}(6,0)$ is anisotropic, it follows that $\on{Spin}(7,1)$ contains a subtorus isomorphic to $\G_m\times R^{(1)}_{\C/\R}(\G_m)^3$.
\end{proof}

\section{Local-global properties of twisted compositions}
The purpose of this section is to prove two auxiliary results on twisted compositions over global fields.

\begin{prop}\label{localglobalHG}
Let $k$ be a global field, let $J$ be an Albert algebra over $k$, let $(L,M,q,\cdot^{*2})$ be a twisted composition over $k$. If $J(L_v,M_v)\cong J_v$ for every $v\in\Sigma_k$, then $J(L,M)\cong J$.
\end{prop}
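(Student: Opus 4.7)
The plan is to reduce the statement to the Hasse principle for Albert algebras over global fields of characteristic different from $2$ and $3$. First I would observe that the Springer construction is functorial with respect to base change of the ground field: inspection of the formulas for $Q$, $\times$, and the product on $J(L,M)$ given in \cite[(6.17), (6.19), (6.21)]{springer2000octonions} shows that for every $k$-algebra $R$, there is a canonical isomorphism $J(L,M)\otimes_kR\cong J(L\otimes_kR,M\otimes_kR)$. Specializing to $R=k_v$ for a place $v$ of $k$, we get $J(L,M)_v\cong J(L_v,M_v)$. The hypothesis of the proposition then reads: $J(L,M)$ and $J$ are two Albert algebras over $k$ whose base changes to $k_v$ are isomorphic for every place $v$ of $k$.

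The second step is to recall that isomorphism classes of Albert $k$-algebras are classified by the Galois cohomology pointed set $H^1(k,F_4^{\mathrm{split}})$, since $F_4^{\mathrm{split}}=\on{Aut}(J_s)$ is the automorphism group of a fixed split Albert algebra. Thus $J(L,M)$ and $J$ correspond to classes $\xi_1,\xi_2\in H^1(k,F_4^{\mathrm{split}})$ whose images in $H^1(k_v,F_4^{\mathrm{split}})$ coincide for every place $v$.

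Finally I would invoke the Hasse principle for $H^1$ of simply connected semisimple groups: since $F_4$ is simply connected, the restriction map
\[
H^1(k,F_4^{\mathrm{split}})\longrightarrow \prod_{v\in\Sigma_k} H^1(k_v,F_4^{\mathrm{split}})
\]
is injective. For number fields this is the Kneser--Harder theorem, and for global function fields of characteristic $\neq 2,3$ this follows from the work of Harder and Chernousov on the Hasse principle for exceptional groups. Injectivity gives $\xi_1=\xi_2$, hence $J(L,M)\cong J$ over $k$. The only substantive step is the citation of the Hasse principle; the rest is bookkeeping. No genuine obstacle arises, because $F_4$ lies squarely within the range of groups where the simply-connected Hasse principle is established.
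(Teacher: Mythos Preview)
Your proposal is correct and follows essentially the same approach as the paper: both reduce to the injectivity of $H^1(k,G_s)\to\prod_v H^1(k_v,G_s)$ for the simply connected group $G_s$ of type $F_4$, after noting that the Springer construction commutes with base change. The paper packages the base-change compatibility as the commutativity of the square relating $H^1(-,\on{Spin}_8\rtimes S_3)$ and $H^1(-,G_s)$ (the horizontal maps being the Springer construction on cohomology classes), but this is just a cohomological rephrasing of your direct observation from the formulas.
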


\begin{proof}
For every field extension $K/k$, isomorphism classes of twisted compositions over $K$ correspond to elements of $H^1(K,\on{Spin}_8\rtimes S_3)$; see \cite[Proposition 36.7]{knus1998involutions}. Isomorphism classes of Jordan algebras over $K$ correspond to elements of $H^1(K,G_s)$, where $G_s$ is the split group of type $F_4$. Under these identifications, the map $H^1(K,\on{Spin}_8\rtimes S_3)\to H^1(K,G_s)$ induced by the inclusion $\on{Spin}_8\rtimes S_3\hookrightarrow G_s$ sends a twisted composition over $K$ to its Springer construction; see \cite[Corollary 38.7]{knus1998involutions}.

Applying the Galois cohomology functor to the inclusion $\on{Spin}_8\rtimes S_3\hookrightarrow G_s$ for $K=k$ and $K=k_v$ for every $v\in \Sigma_k$ yields a commutative diagram
\begin{equation}\label{twisteddiag}
\begin{tikzcd}
H^1(k,\on{Spin}_8\rtimes S_3) \arrow[r] \arrow[d] & H^1(k,G_s) \arrow[d,hook,"j"] \\
\prod_{v\in\Sigma_k} H^1(k_v,\on{Spin}_8\rtimes S_3) \arrow[r] & \prod_{v\in\Sigma_k}H^1(k_v,G_s). 
\end{tikzcd}
\end{equation}
Since $G_s$ is simply connected, by the Hasse principle the map $j$ is injective. The conclusion follows from the commutativity of the diagram.
\end{proof}

\begin{prop}\label{twistedcomp-locglob}
Let $k$ be a global field, let $L$ be a cubic \'etale algebra over $k$, and let $(M,q)$ be a quadratic space of dimension $8$ over $L$. Assume that for every place $v$ of $k$, $(L_v,M_v,q_v)$ admits the structure of a twisted composition $(L_v,M_v,q_v,\cdot^{*2}_v)$ over $k_v$. Then $(L,M,q)$ admits a unique structure of twisted composition $(L_v,M_v,q_v,\cdot^{*2})$ over $k$ whose base change to $k_v$ is isomorphic $(L_v,M_v,q_v,\cdot^{*2}_v)$. 
\end{prop}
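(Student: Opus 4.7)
My approach would mimic the Galois-cohomological method used in the proof of Proposition~\ref{localglobalHG}. By \cite[Proposition 36.7]{knus1998involutions} twisted compositions over $k$ are classified by $H^1(k,\on{Spin}_8\rtimes S_3)$, while pairs $(L',(M',q'))$ of a cubic \'etale $k$-algebra and an $8$-dimensional quadratic $L'$-module are classified by $H^1(k,\on{O}_8\wr S_3)$, where $\on{O}_8\wr S_3:=\on{O}_8^3\rtimes S_3$. The trialitarian embedding $\on{Spin}_8\hookrightarrow \on{O}_8^3$ is $S_3$-equivariant and extends to a closed embedding $\on{Spin}_8\rtimes S_3\hookrightarrow \on{O}_8\wr S_3$, which induces the forgetful map of pointed sets
\[
\Phi\colon H^1(k,\on{Spin}_8\rtimes S_3)\longrightarrow H^1(k,\on{O}_8\wr S_3).
\]
Writing $\xi\in H^1(k,\on{O}_8\wr S_3)$ for the class of $(L,M,q)$, the hypothesis says that each localization $\xi_v$ lies in the image of $\Phi_{k_v}$, and I must show that $\xi$ itself lies in the image of $\Phi$, uniquely up to the local compatibility specified in the statement.

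I would proceed in two stages. First, using the Springer construction of \cite[\S 6.3]{springer2000octonions}, each local structure $\cdot^{*2}_v$ furnishes an Albert algebra $J_v:=J(L_v,M_v)$ over $k_v$. Since $F_4$ is simply connected, the Hasse principle (Kneser--Harder--Chernousov) applies, so that the local Albert algebras glue to a global Albert algebra $J$ over $k$ with $J\otimes_k k_v\cong J_v$ for every $v$; over function fields $H^1(k,F_4)$ is trivial and at non-real places the local $J_v$ is anyway unique, so the only real content is matching the real-place types. Second, I would exhibit this $J$ as the Springer construction of a twisted composition on $(L,M,q)$. By \Cref{constr}\ref{constr4} this amounts to producing a $D_4$-subgroup of $\on{Aut}(J)$ whose associated twisted composition has underlying data $(L,M,q)$. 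Such subgroups exist locally by hypothesis, and the pointwise stabilizer of $L$ inside $\on{Aut}(J)$ is a form of the simply connected group $\on{Spin}_8$, so a further invocation of the Hasse principle for simply connected groups should produce a global $D_4$-subgroup with the required Springer type. Proposition~\ref{localglobalHG}, applied in the reverse direction, would then confirm that the resulting global twisted composition $\Gamma$ has localizations isomorphic to the prescribed $\Gamma_v$.

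The principal obstacle is the second stage: producing not merely a global Albert algebra but one carrying a Springer decomposition of the prescribed type, since twisted composition structures carry strictly more data than the underlying $(L,M,q)$. A careful diagram chase involving the two embeddings $\on{Spin}_8\rtimes S_3\hookrightarrow \on{O}_8\wr S_3$ and $\on{Spin}_8\rtimes S_3\hookrightarrow F_4$ is needed, together with the identification in \Cref{constr}\ref{constr4}, to reduce the existence of the Springer decomposition to a torsor under a form of $\on{Spin}_8$ for which Hasse applies. The uniqueness clause follows by the same mechanism: two twisted composition structures on $(L,M,q)$ that are isomorphic at every place differ by a $k$-section of a torsor under the automorphism group $\on{Aut}(\Gamma)^{\circ}$, which by \Cref{constr}\ref{constr3} is a form of $\on{Spin}_8$, so Hasse forces the torsor to be trivial.
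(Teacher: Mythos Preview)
Your uniqueness argument is essentially correct and matches the paper's: both reduce to the Hasse principle for a simply connected form of $\on{Spin}_8$. One imprecision: the torsor of isomorphisms between two twisted compositions is under $\on{Aut}(\Gamma)$, a form of $\on{Spin}_8\rtimes S_3$, not under $\on{Aut}(\Gamma)^\circ$. The paper handles this by first fibering $H^1(k,\on{Spin}_8\rtimes S_3)\to H^1(k,S_3)$, observing that both classes land in the same fiber $H^1(k,{}^L\on{Spin}_8)$, and then applying Hasse there.

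Your existence argument, however, has a genuine gap in the second stage. You propose to construct a global Albert algebra $J$ with the correct localizations, and then exhibit a Springer decomposition of $J$ of type $(L,M,q)$. The first step is fine. But when you write ``the pointwise stabilizer of $L$ inside $\on{Aut}(J)$ is a form of $\on{Spin}_8$,'' you are presupposing that $L$ is already embedded in $J$ as a cubic \'etale subalgebra---which is part of what must be proved. Even granting such an embedding, you must still show that the orthogonal complement carries a quadratic form isometric to $q$, and you have not explained which $\on{Spin}_8$-torsor computation yields this. The detour through $F_4$ thus trades the original local-global problem for another (producing an embedding $L\hookrightarrow J$ with prescribed complement) that is not visibly easier.

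The paper's existence proof is quite different and avoids $F_4$ entirely: it uses that every twisted composition over a global field is reduced, hence arises (up to similitude) from a symmetric composition; it lifts the local symmetric compositions at the real places to a global one, then adjusts by a global similitude found via weak approximation to match the prescribed $\Gamma_v$ at real places; at finite places the match is automatic since $H^1(k_v,{}^{L_v}\on{Spin}_8)=1$. A cleaner route---implicit in the paper's uniqueness argument---is simply to note that the full Hasse principle for simply connected groups makes $H^1(k,{}^L\on{Spin}_8)\to\prod_{v\text{ real}}H^1(k_v,{}^{L_v}\on{Spin}_8)$ a \emph{bijection}, giving existence and uniqueness simultaneously; you should fiber over $S_3$ rather than over $F_4$.
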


\begin{proof}
We start by showing uniqueness. For every field extension $K/k$, twisted compositions over $K$ are classified by $H^1(K,\on{Spin}_8\rtimes S_3)$, hence we need to show that the natural map \[H^1(k,\on{Spin}_8\rtimes S_3)\xrightarrow{\partial_v} \prod_v H^1(k_v,\on{Spin}_8\rtimes S_3)\] is injective. Let $[\Gamma],[\Gamma']\in H^1(k,\on{Spin}_8\rtimes S_3)$ be such that $\partial_v([\Gamma])=\partial_v([\Gamma'])$ for every $v\in\Sigma_k$. We have a commutative diagram
\begin{equation}\label{twistedcomp1}
\begin{tikzcd}
H^1(k,\on{Spin}_8\rtimes S_3) \arrow[r,"\pi"] \arrow[d,"\partial_v"] & H^1(k,S_3) \arrow[d]\\
\prod_v H^1(k_v,\on{Spin}_8\rtimes S_3) \arrow[r,"\pi_v"] & \prod_v H^1(k_v,S_3).
\end{tikzcd}
\end{equation}
The vertical map on the right is injective. It follows that there exists $[L]\in H^1(k,S_3)$ such that $\pi([\Gamma])=\pi([\Gamma'])=[L]$. Then $[\Gamma],[\Gamma']\in \pi^{-1}([L])=H^1(k, \prescript{L}{}{\on{Spin}_8})$ and $[\Gamma_v],[\Gamma'_v]\in \pi^{-1}([L_v])=H^1(k_v,\prescript{L_v}{}{\on{Spin}_8})$ for every $v\in\Sigma_k$. Since $\prescript{L}{}{\on{Spin}_8}$ is simply connected, by the Hasse principle for torsors the map \[H^1(k,\prescript{L}{}{\on{Spin}_8})\to \prod_vH^1(k_v,\prescript{L_v}{}{\on{Spin}_8})\] is injective. This map is just the restriction to $H^1(k,\prescript{L}{}{\on{Spin}_8})$ of $\partial_v$, hence it sends $[\Gamma]$ and $[\Gamma']$ to the same element. We conclude that $[\Gamma]=[\Gamma']$, that is, $\Gamma\cong \Gamma'$.

We now turn to existence. By \cite[(ii) p. 108]{springer2000octonions}, every twisted composition over a global field is reduced (the result is stated only for number fields, but the proof works equally well in the function field case). This means that, up to similitude, every twisted composition over a global field arises from a symmetric composition. Let $r$ be the number of real places of $k$. By \cite[(v) p. 22]{springer2000octonions}, every $r$-tuple of symmetric compositions at the real places of $k$ comes from a symmetric composition over $k$. This gives a twisted composition $\Gamma$ over $k$ such that $\Gamma_v$ is similar to $(L_v,M_v,q_v,\cdot^{*2})$ for every real place $v$. This means that $\Gamma_v\cong (L_v,M_v,\epsilon_vq_v,\delta_v\cdot_v^{*2})$, where $\epsilon_v\in L_v^{\times}/L_v^{\times 2}$ and $\delta_v\in L_v^{\times}/L_v^{\times 2}$. Since $v$ is real, we have $k_v\cong \R$ and either $L_v=\R^3$ or $L_v=\R\times \C$. By the Weak Approximation theorem, it is not difficult to find $\epsilon\in L$ and $\delta\in k$ such that the $\epsilon_v$ and $\delta_v$ come from $\epsilon$ and $\delta$. If we set $\Gamma':=(L,M,\epsilon q,\delta\cdot^{*2})$, we have $\Gamma_v\cong \Gamma'_v$ for every real place $v$. On the other hand, if $v$ is a finite place, then $H^1(k_v,\prescript{L_v}{}{\on{Spin}_8})$ is trivial because $\prescript{L}{}{\on{Spin}_8}$ is simply connected. This means that the map $H^1(k_v,\on{Spin}_8\rtimes S_3)\to H^1(k_v,S_3)$ is a bijection, that is, a twisted composition over $k_v$ is uniquely determined by the associated cubic \'etale $k_v$-algebra. We conclude that  $\Gamma_v\cong \Gamma'_v$ for every $v\in\Sigma_k$, hence $\Gamma\cong \Gamma'$, by the uniqueness part.
\end{proof}

\section{Local-global principle for maximal tori in orthogonal groups with trivial Clifford invariant}
The purpose of this section is to prove the following proposition, which will be used in the proof  of \Cref{mainthm} and is also of independent interest.

\begin{prop}\label{trivialclifford}
Let $k$ be a global field, let $(V,q)$ be a quadratic space of dimension $2n$, let $(E,\sigma)$ be an \'etale $k$-algebra with involution such that $\on{rank}_kE=2\on{rank}_kE^{\sigma}=2n$. Assume that $q$ has trivial Clifford invariant. If $(E_v,q_v)$ is realizable for every $v\in\Sigma_k$, then $(E,q)$ is realizable.
\end{prop}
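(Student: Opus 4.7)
The strategy is to apply the Bayer-Fluckiger theorem stated above: realizability of $(E,q)$ is equivalent to $\mc{C}(E,q)$ being connected, and $\mc{C}(E,q)$ is non-empty precisely when $(E,q)$ is realizable everywhere locally. The hypothesis gives non-emptiness, so I must produce a connected element $C\in\mc{C}(E,q)$.

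Pick any $C=(q_i^v)\in\mc{C}(E,q)$ with associated sets $S_i=S_i(C)$. From the orthogonal-sum identity $q_v\cong q_1^v\oplus\cdots\oplus q_r^v$ of condition (ii), the Hasse invariant decomposes in $\on{Br}_2(k_v)$ as
\[
w(q_v)=\sum_i w(q_i^v)+\sum_{i<j}(\det q_i^v,\det q_j^v).
\]
Condition (i) together with \cite[Lemma 1.3.2]{bayer2014embeddings} forces $\det q_i^v=\on{disc}(K_i)\in k_v^\times/k_v^{\times 2}$, the localization of a global element. Hence the cross terms and $w(q_v)=w(q)_v$ are all localizations of global classes in $\on{Br}_2(k)$; taking invariants, summing over $v$, and applying Hasse--Brauer--Noether reciprocity yields $\sum_i|S_i|\equiv 0\pmod 2$. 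In particular, indices $i$ with $|S_i|$ odd come in pairs.

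To get connectedness, the plan is to use $c(q)=0$ to modify $C$ to $C'\in\mc{C}(E,q)$ for which every $|S_i(C')|$ is even, making the chain condition in the definition of connectedness vacuous. Since the Clifford invariant differs from the Hasse invariant only by terms involving $\det q=\on{disc}(E)$, vanishing of $c(q)$ pins down $w(q)$ to a specific global class determined by $\on{disc}(E)$; this is exactly the reciprocity constraint appearing in \cite[Proposition 2.2.1]{bayer2014embeddings}, which allows prescription of Hasse invariants of trace forms $q_\alpha$ for $\alpha\in(E^\sigma)^\times$ at finitely many places. Using this, at each pair of indices $(i,j)$ with $|S_i|,|S_j|$ both odd, one shifts a Hasse invariant from $q_i^v$ to $q_j^v$ at a judicious place $v$ without disturbing condition (ii), turning both parities even. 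Iterating, I obtain $C'$.

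The main obstacle will be setting up the modification cleanly: verifying that the shift operation lands back in $\mc{C}(E,q)$ (keeping $(K_i,q_i^v)$ realizable locally, which by \Cref{toriorthogonal} amounts to preserving disc and signature constraints) and that the global reciprocity identity needed to balance the shifts is precisely the one furnished by $c(q)=0$. Concretely, I expect that the obstruction to connectedness of $\mc{C}(E,q)$ lives in a quotient of $\on{Br}_2(k)$ on which $c(q)$ projects to the class of the obstruction; then $c(q)=0$ forces the obstruction to vanish.
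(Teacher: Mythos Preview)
Your approach is genuinely different from the paper's. Rather than modify a collection $C\in\mc{C}(E,q)$ directly, the paper argues by induction on the number of connected components of the graph on $I$ (after first reducing, via \Cref{noswitch}, to the case where every factor $K_i$ is a $\sigma$-stable field --- a reduction you omit). When $I$ is disconnected the paper splits $E=E_1\times E_2$ along a separation of components, invokes \Cref{explicittrivialclifford} to produce global forms $q_i$ with trivial Clifford invariant and $(E_i,q_i)$ realizable, and then observes that at every place $v$ at least one of $\on{disc}(E_1),\on{disc}(E_2)$ is a square in $k_v^\times$ (an immediate consequence of disconnectedness). Via \Cref{square} and (\ref{hasse-clifford}) this forces $q$ and $q_1\perp q_2$ to agree in rank, discriminant and Hasse invariant at every place; an explicit signature-balancing procedure at the real places then upgrades this to $q\cong p_1\perp p_2$ with each $(E_i,p_i)$ realizable by induction.

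Your plan has a real gap at the shifting step. The global parity $\sum_i|S_i|\equiv 0\pmod 2$ you derive is correct but not enough: the ``shift'' flipping $w(q_i^v)$ and $w(q_j^v)$ simultaneously requires a place $v$ non-split in both $K_i$ and $K_j$, i.e.\ an edge between $i$ and $j$ in precisely the graph whose connectedness is at issue. If the indices with $|S_i|$ odd lie in distinct connected components, no chain of such shifts can pair them. What you actually need is that the \emph{component-wise} parity $\sum_{i\in I_a}|S_i|$ is even for every component $I_a$. This does follow from $c(q)=0$, but through the very observation the paper isolates and you do not supply: at each place $v$, all indices $i$ with $v$ non-split in $K_i$ lie in a single component $I_{a(v)}$, so $q_v\cong Q^v_{a(v)}\perp(\text{hyperbolic})$ with $Q^v_a:=\bigoplus_{i\in I_a}q_i^v$, whence $c(Q^v_a)=0$ for every $a$ and every $v$. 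Since $c(Q^v_a)$ differs from $\sum_{i\in I_a}w(q_i^v)$ only by localizations of fixed global Brauer classes in the $\on{disc}(K_i)$, reciprocity then gives $\sum_{i\in I_a}|S_i|\equiv 0\pmod 2$. Your ``I expect that the obstruction\dots'' gestures toward this without reaching it. Even once this is established, the shifts at real places are not unconstrained: Hasse invariants there are dictated by signatures subject to \Cref{toriorthogonal}\ref{toriorthogonal3}, and carrying the modification through requires an analysis at least as careful as the paper's $(a_v^i,b_v^i)$ adjustment.
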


Let $(V,q)$ be a quadratic space of dimension $2n$. The condition that $q$ has trivial Clifford invariant (i.e. the Clifford algebra $C(V,q)$ is split), is equivalent to the following condition on the Hasse invariant of $q$:
  \begin{equation}\label{hasse-clifford}
    w(q)=
    \begin{cases}
     (-1,\on{disc}(q)), & \text{if}\ n\equiv 0\pmod{4} \\
     0, & \text{if}\ n\equiv 1\pmod{4} \\
     (-1,-\on{disc}(q)), & \text{if}\ n\equiv 2\pmod{4} \\
     (-1,-1), & \text{if}\ n\equiv 3\pmod{4}.
    \end{cases}
  \end{equation}
In particular, we see that if $q$ and $q'$ are $2n$-dimensional quadratic forms with trivial Clifford invariant and equal discriminant, then $w(q)=w(q')$. These formulas are proved in \cite[Chapter V, 3.20]{lam2005introduction}, where they are expressed in terms of $\on{det}(q)$ instead of $\on{disc}(q)$. However, $\on{det}(q)$ appears only when $n$ is even (i.e. $\on{dim}(q)$ is divisible by $4$), and in that case $\on{disc}(q)=\on{det}(q)$.

\begin{lemma}\label{noswitch}
In the course of proving \Cref{trivialclifford}, we may assume that $E=K_1\times\dots\times K_r$, where every $K_i$ is a field and is $\sigma$-stable.
\end{lemma}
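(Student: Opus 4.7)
The plan is to strip off an orthogonal ``hyperbolic'' contribution coming from the factors of $E$ swapped by $\sigma$, and then reduce to the smaller datum with only $\sigma$-stable field factors. Concretely, I would decompose $(E,\sigma)=(E_1,\sigma_1)\times (E_2,\sigma_2)$, where $E_1$ is the product of all $\sigma$-stable field factors of $E$ and $E_2\cong R\times R$ absorbs the remaining pairs of fields swapped by $\sigma$. Set $2n_i:=\on{rank}_k E_i$, so $n_1+n_2=n$. By \cite[Proposition 1.3.1]{bayer2014embeddings}, realizability of $(E,q)$ corresponds to the existence of $\alpha=(\alpha_1,\alpha_2)\in E_1^{\sigma_1}\times E_2^{\sigma_2}$ such that $q\cong q_{\alpha_1}\perp q_{\alpha_2}$; by \Cref{switch}, $q_{\alpha_2}$ is forced to be the hyperbolic form $h_{2n_2}$ of rank $2n_2$. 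Therefore, by Witt cancellation, $(E,q)$ is realizable if and only if there exists a decomposition $q\cong q_1\perp h_{2n_2}$ over $k$ with $(E_1,q_1)$ realizable.

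The next step is to produce the global decomposition $q\cong q_1\perp h_{2n_2}$ from the local hypothesis. At each place $v$, local realizability of $(E_v,q_v)$ gives $q_v\cong q_{1,v}\perp h_{2n_2}$, with $q_{1,v}$ uniquely determined up to isometry by Witt cancellation. I would then apply Hasse-Minkowski iteratively: since $q_v$ is isotropic at every $v$, we have $q\cong q'\perp\langle 1,-1\rangle$ over $k$ for some $q'$ of rank $2n-2$, and Witt cancellation at each place gives $q'_v\cong q_{1,v}\perp h_{2n_2-2}$, which is again isotropic. Iterating $n_2$ times strips off the form $h_{2n_2}$ and produces a global $q_1$ of rank $2n_1$ with $q\cong q_1\perp h_{2n_2}$ and $(q_1)_v\cong q_{1,v}$ at every $v$.

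To finish the reduction, I would verify that $q_1$ still has trivial Clifford invariant. Using the orthogonal-sum formula $w(q_1\perp h_{2n_2})=w(q_1)+w(h_{2n_2})+(\det q_1,\det h_{2n_2})$ together with $\on{disc}(h_{2n_2})=(-1)^{n_2}$ and the known value of $w(h_{2n_2})$, condition (\ref{hasse-clifford}) for $q$ translates directly into the corresponding condition for $q_1$ after adjusting for the change in dimension modulo $4$. Since $(E_{1,v},q_{1,v})$ is realizable for every $v$ by construction and $q_1$ has trivial Clifford invariant, proving \Cref{trivialclifford} for $(E_1,q_1)$ will yield realizability of $(E_1,q_1)$, and hence of $(E,q)$ by pairing with any $\alpha_2\in E_2^{\sigma_2}$. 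This reduces the problem to the case where $E$ consists only of $\sigma$-stable field factors, as claimed.

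The main obstacle is the second step: extracting the global form $q_1$ from its local avatars. The iterative isotropy argument sketched above works cleanly, but a more conceptual alternative would prescribe $q_1$ directly by its rank $2n_1$, discriminant $\on{disc}(q)\cdot(-1)^{n_2}$, local Hasse invariants (determined by those of $q$ and $h_{2n_2}$), and real signatures $(r_v-n_2,s_v-n_2)$ (non-negative by local realizability), and then invoke the existence part of Hasse-Minkowski after verifying the product formula---a calculation forced by the product formulas for $q$ and $h_{2n_2}$ themselves.
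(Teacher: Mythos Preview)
Your proposal is correct and follows essentially the same route as the paper: decompose $(E,\sigma)=(E_1,\sigma_1)\times(E_2,\sigma_2)$, use \Cref{switch} to see that the $E_2$-contribution to any realization is hyperbolic, split off $h_{2n_2}$ globally, check that the residual form retains trivial Clifford invariant, and reduce to $(E_1,q_1)$. The only cosmetic difference is in how the hyperbolic part is globally extracted: the paper writes the Witt decomposition $q=q_a\perp h_{2m}$ once and uses Hasse--Minkowski to find a place where $q_a$ stays anisotropic (forcing $m\ge n_2$), whereas you peel off one hyperbolic plane at a time via iterated Hasse--Minkowski; both arguments are equivalent, and the paper's Clifford-invariant check (via \cite[Chapter V, (3.13)]{lam2005introduction}, using $\on{disc}(h_{2n_2})=1$) is a slightly cleaner substitute for your Hasse-invariant computation.
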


\begin{proof}
Let $(E,\sigma)$ be an \'etale algebra with involution over $k$, and write \[E=E_1\times E_2,\qquad E_1:=K_1\times\dots\times K_s,\qquad E_2:=K_{s+1}^2\times\dots\times K_r^2,\] where every $K_i$ is a field, $K_i$ is $\sigma$-invariant when $1\leq i\leq s$ and every $\sigma$ acts by switching the two factors $K_i$ when $s+1\leq i\leq r$. We let $\on{rank}_kE_1=2n_1$ and $\on{rank}_kE_2=2n_2$.

Let $(V,q)$ be a quadratic space of dimension $2n$, and assume that $(E,q)$ is realizable everywhere locally. By \cite[Theorem 3.2.1(a)]{bayer2014embeddings}, the set $\mc{C}(E,q)$ is not empty, and by \cite[Theorem 3.2.1(b)]{bayer2014embeddings} and the assumptions $\mc{C}(E_1,q')$ is connected for every quadratic space $(V',q')$ of dimension $2n_1$ and trivial Clifford invariant.  

We have $\Sigma_k^{\on{split}}(E_2)=\Sigma_k$. By \Cref{toriorthogonal}\ref{toriorthogonal1}, for every collection $(q_i^v)\in \mc{C}(E,q)$, we have that $q_i^v$ is hyperbolic for every $v\in \Sigma_k$ and every $s+1\leq i\leq r$. It follows that for every place $v$ of $k$, we have a decomposition $q_v=q'_v\perp h_{2n_2}$ for some $q'_v$ of dimension $2n_1$. By Witt's Decomposition Theorem \cite[Chapter I, 4.1]{lam2005introduction}, we may write $q=q_a\perp h_{2m}$, where $q_a$ is anisotropic and $h_{2m}$ is hyperbolic of dimension $2m$; the quadratic forms $q_a$ and $h$ are uniquely determined up to isometry. By the Hasse-Minkowski Principle \cite[Chapter VI, 3.1]{lam2005introduction} there exists a place $v$ such that $(q_a)_v$ is anisotropic. We have $(q_a)_v\perp h_{2m}\cong q_v\cong q'_v\perp h_{2n_2}$, hence by Witt's Cancellation Theorem \cite[Chapter I, 4.2]{lam2005introduction} we must have $m\geq n_2$ (and $q'_v\cong (q_a)_v\perp h_{2(m-n_2)}$).

Since $m\geq n_2$, there exists a quadratic form $p$ such that $q\cong p\perp h_{2n_2}$. We have $p_v\perp h_{2n_2}\cong q\cong q^1_v\perp\dots\perp q^s_v\perp h_{2n_2}$, hence by Witt's Cancellation Theorem $p_v\cong q^1_v\perp\dots\perp q^s_v$. This shows that $\mc{C}(E_1,p)$ is non-empty, i.e., that $(E_1,p)$ is realizable everywhere locally. Since $\on{dim}(p)$ and $\on{dim}(h_{2n_2})$ are even and $q$ has trivial Clifford invariant, by \cite[Chapter V, (3.13)]{lam2005introduction}, the Clifford invariant of $p$ equals $(\on{disc}(p),1)$, and so is also trivial. By assumption, this means that $(E_1,p)$ is realizable. By \Cref{switch}, $(E_2,h_{2n_2})$ is realizable. Since $(E,\sigma)=(E_1,\sigma|_{E_1})\times (E_2,\sigma|_{E_2})$, and $q\cong p\perp h_{2n_2}$, we conclude that $(E,q)$ is realizable, as desired.
\end{proof}

\begin{lemma}\label{square}
Let $k$ be an arbitrary field, let $q_1$ and $q_2$ be quadratic forms of dimension $2n_1$ and $2n_2$. Assume that $\on{disc}(q_1)$ is a square and that $q_1$ and $q_2$ have trivial Clifford invariant. Then $q_1\perp q_2$ has trivial Clifford invariant.
\end{lemma}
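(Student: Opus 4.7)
The plan is to use the characterization (\ref{hasse-clifford}) of trivial Clifford invariant in terms of the Hasse invariant and the discriminant, together with the standard additivity formula
\[w(q_1 \perp q_2) = w(q_1) + w(q_2) + (\on{det}(q_1), \on{det}(q_2))\]
and the multiplicativity of the determinant. This reduces the problem to a finite case analysis in $(n_1 \bmod 4, n_2 \bmod 4)$.

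First I would translate the hypothesis on $\on{disc}(q_1)$ into information about $\on{det}(q_1)$. Since $\on{disc}(q_1) = (-1)^{n_1(2n_1-1)}\on{det}(q_1)$ and $n_1(2n_1-1) \equiv n_1 \pmod 2$, the assumption that $\on{disc}(q_1)$ is a square gives $\on{det}(q_1) \equiv (-1)^{n_1}$ modulo squares. The same relation for $q_2$ yields $\on{det}(q_2) \equiv (-1)^{n_2}\on{disc}(q_2)$. Setting $n = n_1 + n_2$ and using $\on{det}(q_1 \perp q_2) = \on{det}(q_1)\on{det}(q_2)$, a short calculation gives $n(2n-1) - n_1(2n_1-1) - n_2(2n_2-1) = 4n_1n_2$, an even integer, and therefore $\on{disc}(q_1 \perp q_2) = \on{disc}(q_1)\on{disc}(q_2) = \on{disc}(q_2)$ in $k^{\times}/k^{\times 2}$.

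The final step is to compute $w(q_1 \perp q_2)$ from the additivity formula and compare with the value prescribed by (\ref{hasse-clifford}) for the form $q_1 \perp q_2$ of dimension $2n$ and discriminant $\on{disc}(q_2)$. The hypothesis that $q_1$ and $q_2$ have trivial Clifford invariant fixes $w(q_1)$ and $w(q_2)$ in terms of $n_1, n_2 \bmod 4$ and $\on{disc}(q_2)$ via (\ref{hasse-clifford}); the cross term $(\on{det}(q_1),\on{det}(q_2)) = ((-1)^{n_1},(-1)^{n_2}\on{disc}(q_2))$ is zero when $n_1$ is even and equals $(-1,(-1)^{n_2}\on{disc}(q_2))$ when $n_1$ is odd. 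Summing these three contributions and comparing, in each of the sixteen cases, to the value demanded by (\ref{hasse-clifford}) for $n \bmod 4$ and discriminant $\on{disc}(q_2)$ shows that the two agree; hence $q_1 \perp q_2$ has trivial Clifford invariant.

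The only real obstacle is bookkeeping: the argument is a systematic case check with no conceptual difficulty, and in particular does not require Merkurjev's theorem or any hypothesis on the ground field beyond $\on{char} k \neq 2$ (which is the standing assumption).
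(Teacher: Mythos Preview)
Your approach is correct, but it takes a longer route than the paper's. You translate the Clifford invariant condition into a condition on the Hasse invariant via (\ref{hasse-clifford}), then use the additivity formula for $w$ and a sixteen-case check in $(n_1 \bmod 4,\, n_2 \bmod 4)$. The paper instead appeals directly to the additivity formula for the Clifford invariant itself: for even-dimensional forms one has (by \cite[Chapter~V, (3.13)]{lam2005introduction})
\[
c(q_1\perp q_2)=c(q_1)+c(q_2)+(\on{disc}(q_1),\on{disc}(q_2)),
\]
so under the hypotheses $c(q_1)=c(q_2)=0$ this reduces immediately to $(\on{disc}(q_1),\on{disc}(q_2))$, which vanishes because $\on{disc}(q_1)$ is a square. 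That is the entire argument. Your method essentially re-derives this additivity formula case by case through the Hasse invariant, which is fine and self-contained, but it obscures the structural reason the lemma holds. If you want to keep your write-up, you could shorten it substantially by noting that the formula for $c(q_1\perp q_2)$ is available in the literature and using it directly.
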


\begin{proof}
Since $\dim(q_1)$ and $\on{dim}(q_2)$ are even, by \cite[Chapter V, (3.13)]{lam2005introduction} the Clifford invariant of $q_1\perp q_2$ equals $(\on{disc}(q_1),\on{disc}(q_2))$. Since $\on{disc}(q_1)$ is a square, $(\on{disc}(q_1),\on{disc}(q_2))=0$, and the conclusion follows.
\end{proof}

\begin{lemma}\label{mod4}
Let $q_1$ and $q_2$ be quadratic forms over $\R$, and let $(r_1,s_1)$ and $(r_2,s_2)$ be their signatures. Assume that $s_1\equiv s_2\pmod{2}$ and that $w(q_1)=w(q_2)$. Then $s_1\equiv s_2\pmod{4}$.
\end{lemma}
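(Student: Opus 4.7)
The plan is to compute both invariants explicitly and read off the conclusion from a simple parity argument about binomial coefficients.

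First, I would diagonalize: since a real quadratic form of signature $(r,s)$ is isometric to $\langle 1,\dots,1,-1,\dots,-1\rangle$ (with $r$ entries equal to $1$ and $s$ entries equal to $-1$), the Hasse invariant reduces to a count of quaternion symbols. Over $\R$ we have $\on{Br}_2(\R) = \Z/2\Z$, generated by the class $(-1,-1)$, and the symbols $(1,1)$ and $(1,-1)$ are trivial. So
\[
w(q) = \sum_{i<j}(a_i,a_j) = \binom{s}{2}\cdot (-1,-1) \in \on{Br}_2(\R).
\]
Therefore $w(q_1) = w(q_2)$ is equivalent to $\binom{s_1}{2} \equiv \binom{s_2}{2} \pmod{2}$.

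Next, I would observe that the function $s \mapsto \binom{s}{2} \bmod 2$ is periodic of period $4$, taking the values $0,0,1,1$ on residues $0,1,2,3 \pmod{4}$. Hence if $s_1$ and $s_2$ have the same parity but are not congruent mod $4$, then $\{s_1 \bmod 4, s_2 \bmod 4\} = \{0,2\}$ or $\{1,3\}$; in both cases $\binom{s_1}{2} \not\equiv \binom{s_2}{2} \pmod{2}$, contradicting $w(q_1)=w(q_2)$. Combining the hypothesis $s_1 \equiv s_2 \pmod 2$ with the equality of Hasse invariants thus forces $s_1 \equiv s_2 \pmod 4$, as required.

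There is no real obstacle here; the whole content is the elementary mod-$4$ behavior of $\binom{s}{2}$. The only point requiring minor care is keeping track of the convention that $(1,a)=0$ in $\on{Br}_2(\R)$, so that only pairs of $-1$'s contribute to $w(q)$.
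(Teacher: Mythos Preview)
Your argument is correct and follows essentially the same route as the paper: both compute $w(q)=\binom{s}{2}(-1,-1)$ over $\R$ and then do an elementary mod-$4$ analysis. The only cosmetic difference is that the paper factors $s_1(s_1-1)-s_2(s_2-1)=(s_1-s_2)(s_1+s_2-1)$ and uses that $s_1+s_2-1$ is odd, whereas you argue via the periodicity of $\binom{s}{2}\bmod 2$.
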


\begin{proof}
We have $w(q_i)=\frac{1}{2}{s_i(s_i-1)}(-1,-1)$, hence $w(q_1)=w(q_2)$ is equivalent to $s_1^2-s_1\equiv s_2^2-s_2\pmod{4}$, i.e. $(s_1-s_2)(s_1+s_2-1)\equiv 0\pmod{4}$. By assumption $s_1\equiv s_2\pmod{2}$, hence $s_1+s_2-1$ is odd. We conclude that $4$ divides $s_1-s_2$.
\end{proof}

\begin{lemma}\label{explicittrivialclifford}
Let $(E,\sigma)$ be an \'etale $k$-algebra with involution such that $\on{rank}_kE=2\on{rank}_kE^{\sigma}=2n$. There exists a quadratic form $q$ of dimension $2n$ and trivial Clifford invariant such that $(E,q)$ is realizable.
\end{lemma}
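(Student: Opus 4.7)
My plan is to produce $\alpha \in (E^\sigma)^\times$ such that the quadratic form $q_\alpha(x,y) := \operatorname{Tr}_{E/k}(\alpha x \sigma(y))$ has trivial Clifford invariant; realizability of $(E, q_\alpha)$ then follows from \cite[Proposition 1.3.1]{bayer2014embeddings}. Since $\operatorname{disc}(q_\alpha) = \operatorname{disc}(E,\sigma)$ is independent of $\alpha$ by \cite[Lemma 1.3.2]{bayer2014embeddings}, triviality of the Clifford invariant reduces, via formula (\ref{hasse-clifford}), to prescribing $w(q_\alpha)$ to equal a fixed global class $w_0 \in \operatorname{Br}_2(k)$ determined only by $2n$ and $\operatorname{disc}(E,\sigma)$.

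I would prescribe $\alpha$ one place at a time (assuming $k$ global, as throughout Section 8). At a finite place $v \in \Sigma_k^{\operatorname{split}}(E)$, every $\alpha_v$ yields a hyperbolic $q_{\alpha_v}$ by \Cref{toriorthogonal}\ref{toriorthogonal1} combined with \Cref{switch}; such a form has trivial Clifford invariant automatically, so the local identity $w(q_{\alpha_v}) = (w_0)_v$ holds for free. At a finite place $v \notin \Sigma_k^{\operatorname{split}}(E)$, \cite[Proposition 2.2.1]{bayer2014embeddings} (already invoked in the proof of \Cref{toriorthogonal}\ref{toriorthogonal2}) lets me choose $\alpha_v$ with $w(q_{\alpha_v}) = (w_0)_v$. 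At a real place $v$, I pick a signature of $q_{\alpha_v}$, which by \Cref{toriorthogonal}\ref{toriorthogonal3} must be of the form $(2r'+\rho_v, 2s'+\rho_v)$, so as to make the Clifford invariant vanish. Using the explicit identities $w(q) = \binom{s}{2}(-1,-1)$ and $\operatorname{disc}(q) = (-1)^s$ over $\R$, a case analysis by $n \bmod 4$ and the parity of $\rho_v$ isolates the required residue class of $s$ modulo $4$, and the admissible set $s \in \{\rho_v, \rho_v+2, \ldots, 2n-\rho_v\}$ always meets it (including the extremal case $\rho_v = n$, where the unique option $s = n$ happens to lie in the correct class in every branch of the case analysis).

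Since $w(q_\alpha)$ is trivial at almost all places, only finitely many local prescriptions are needed, so weak approximation on $E^\sigma$ produces a global $\alpha \in (E^\sigma)^\times$ realizing all of them simultaneously. Global consistency is automatic, since $w_0$ is itself a global Brauer class and therefore its local images sum to zero by reciprocity. Consequently $w(q_\alpha) = w_0$ globally, and $q_\alpha$ has trivial Clifford invariant, as required.

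The main obstacle is the real-place analysis: one must track the parities of $n$, $\rho_v$ and $s$ simultaneously and verify that each combination admits an admissible signature of the appropriate residue class modulo $4$. Modulo this mechanical computation, the argument is a direct assembly of the local results in \Cref{toriorthogonal}, \cite[Proposition 2.2.1]{bayer2014embeddings}, and weak approximation on the étale algebra $E^\sigma$.
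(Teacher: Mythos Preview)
Your approach has a genuine gap at the weak approximation step. Weak approximation lets you choose a global $\alpha$ agreeing with the prescribed $\alpha_v$ at a \emph{finite} set $S$ of places, but the condition $w(q_\alpha)_v=(w_0)_v$ must hold at \emph{every} place. Your justification ``since $w(q_\alpha)$ is trivial at almost all places'' is circular: the finite exceptional set where $w(q_\alpha)_v\neq 0$ depends on the global $\alpha$ you have not yet produced, and there is no finite $S$, fixed in advance, outside of which $w(q_\alpha)_v=(w_0)_v$ holds for every $\alpha$. (For instance, at any finite $v\notin S$ where the chosen $\alpha$ picks up odd valuation in some factor of $E^\sigma_v$, the Hasse invariant of $q_\alpha$ can jump.) The reciprocity remark also does not close the gap: it shows the local targets $(w_0)_v$ are consistent with \emph{some} global class, not that they equal the localizations of $w(q_\alpha)$ for your particular $\alpha$. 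The natural attempt to sidestep this --- globalize the local forms $q_{\alpha_v}$ to a single $q$ via \cite[Theorem 6.10]{scharlau1985quadratic} and then argue that $(E,q)$ is realizable --- is exactly the local--global statement of \Cref{trivialclifford}, whose proof uses the present lemma; so that route is circular.

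The paper avoids all of this by writing down an explicit $\alpha$. One presents $E^\sigma=k[t]/(f(t))$ so that $E=E^\sigma(\sqrt{t})$ with $\sigma(\sqrt{t})=-\sqrt{t}$, takes $\alpha=\tfrac{1}{2f'(t)}$, and invokes \cite[Lemma~3.6]{fiori2012characterization}, where the Clifford invariant of the resulting $q_\alpha$ is computed directly and shown to vanish. Realizability of $(E,q_\alpha)$ is then immediate from \cite[Proposition~1.3.1]{bayer2014embeddings}. No local--global argument, no case analysis at real places, and no assumption that $k$ is global are needed.
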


\begin{proof}
We may write $E^{\sigma}=k[t]/(f(t))$, for some separable polynomial $f(t)\in k[t]$, in such a way that $E=E^{\sigma}(\sqrt{t})$ and $\sigma(\sqrt{t})=-\sqrt{t}$. Let $q:E\to k$ be the quadratic form defined by $q(x):=\on{Tr}_{E/K}(\alpha x\sigma(x))$, where $\alpha=\frac{1}{2f'(t)}$. By \cite[Proposition 1.3.1]{bayer2014embeddings}, $(E,q)$ is realizable. By \cite[Lemma 3.6]{fiori2012characterization}, $q$ has trivial Clifford invariant.\footnote{In \cite{fiori2012characterization}, the Clifford invariant of $q$ is called the Witt invariant, and is denoted by $W(q)$.}
\end{proof}

\begin{proof}[Proof of \Cref{trivialclifford}]
By \Cref{noswitch}, we may assume that $E=K_1\times\dots\times K_r$, where every $K_i$ is a field and is $\sigma$-stable, and $\on{rank}_kE=2\on{rank}_kE^{\sigma}=2n$. Let $I:=\set{1,\dots,r}$. If $I$ is connected, then the claim follows from \cite[Theorem 3.2.1]{bayer2014embeddings}, so assume that $I$ is not connected. Rearranging the $K_i$ if necessary, we may then find some $1<s<r$ such that if $i\leq s$ and $j\geq s+1$, then $i$ and $j$ are not connected. Write \[E=E_1\times E_2,\qquad E_1:=K_1\times\dots\times K_s,\qquad E_2:=K_{s+1}\times\dots\times K_r.\]
We have $\on{rank}_kE_i=2\on{rank}_kE_i^{\sigma}=2n_i$, for some $n_i\geq 1$. For every real place $v$ and $i=1,2$, we denote by $\rho^i_v$ the number of real places of $E_i^{\sigma}$ over $v$ which do not ramify in $E_i$. We have $\rho_v=\rho^1_v+\rho^2_v$ for every real place $v$.

By \Cref{explicittrivialclifford}, there exist quadratic forms $q_i$ over $k$ of dimension $2n_i$ and with trivial Clifford invariant such that $(E_i,q_i)$ are realizable, for $i=1,2$. By \cite[Proposition 2.4.1]{bayer2014embeddings} we have $\on{disc}(q_i)=\on{disc}(E_i)\in k^{\times}/k^{\times 2}$, and for every real place $v$ the signature of $q_i^v$ is of the form $(2r^i_v+\rho^i_v,2s^i_v+\rho^i_v)$, where $r^i_v,s^i_v\geq 0$. 

By \cite[Proposition 2.4.1]{bayer2014embeddings} we also have $\on{disc}(q)=\on{disc}(E)$, $w(q_v)=w(h_{2n})$ when $v\in \Sigma_k^{\on{split}}$, and for every real place $v$ the signature of $q_v$ is $(2r_v+\rho_v,2s_v+\rho_v)$, for some $r_v,s_v\geq 0$. 

Let $v$ be a place of $k$. Assume that neither $\on{disc}(q_1)$ nor $\on{disc}(q_2)$ are squares in $k_v^{\times}$. We have \[\on{disc}(q_1)=\prod_{i=1}^s\on{disc}(K_i),\qquad \on{disc}(q_2)=\prod_{j=s+1}^r\on{disc}(K_j).\]
It follows that there exist $1\leq i\leq s$ and $s+1\leq j\leq r$ such that $\on{disc}(K_i)$ and $\on{disc}(K_j)$ are not squares in $k_v^{\times}$. This means that $i$ and $j$ are connected, a contradiction. 

We have just shown that for every place $v$ of $k$, either $\on{disc}(q_1)$ or $\on{disc}(q_2)$ is a square in $k_v^{\times}$. By \Cref{square}, this implies that $q_1\perp q_2$ has trivial Clifford invariant. Since $q_1$ and $q_2$ are even-dimensional, we have $\on{disc}(q)=\on{disc}(q_1)\on{disc}(q_2)=\on{disc}(q_1\perp q_2)$. It follows from (\ref{hasse-clifford}) that $w(q_v)=w((q_1\perp q_2)_v)$ for every place (finite or archimedean) $v$.

What we have shown so far is that $q$ and $q_1\perp q_2$ have the same rank, discriminant and Hasse invariant. It is not necessarily the case that the signatures of $q$ and $q_1\perp q_2$ coincide. Our strategy is to modify $q_i$ so that $(E_i,q_i)$ remains realizable for $i=1,2$, discriminant and Hasse invariant remain the same, but the signatures at real places also agree.

Let $v$ be a real place. We have $\rho_v=\rho_v^1+\rho_v^2$, hence the signature of $q_1\perp q_2$ is $(2r^1_v+2r^2_v+\rho_v,2s^1_v+2s^2_v+\rho_v)$. In particular, the signatures of $q$ and $q_1\perp q_2$ are congruent modulo $2$. By \Cref{square}, the negative parts of the signatures of $q$ and $q_1\perp q_2$ are congruent modulo $4$, and it clearly follows that the same is true for the positive parts. In other words, for every real place $v$ we may write
\[\on{sign}(q_v)=2(r_v,s_v)+(\rho_v,\rho_v), \qquad \on{sign}((q_1\perp q_2)_v)=2(r'_v,s'_v)+(\rho_v,\rho_v),\] where $r_v\equiv r'_v\pmod 2$ and $s_v\equiv s'_v\pmod 2$. Note that whenever two real quadratic forms have signatures that differ by multiples of $4$, their discriminants are equal. Since the two signatures of $q_v$ and $(q_1\perp q_2)_v$ differ by multiples of $4$, we can modify the signature of $q_1$ and $q_2$ by multiples of $4$ to obtain the signature of $q$, leaving the discriminant untouched. Here are the details. 

For every real place $v$, we construct two pairs $(a^1_v,b^1_v)$ and $(a^2_v,b^2_v)$ of non-negative integers such that $a^i_v+b^i_v=n_i-\rho^i_v$ and such that $a^1_v+a^2_v=2r_v-\rho_v$, $b^1_v+b^2_v=2s_v-\rho_v$, $a^1_v+a^2_v\equiv r_v\pmod 2$ and $b^1_v+b^2_v\equiv s_v\pmod 2$ . We start by setting $a^i_v:=r^i_v$ and $b^i_v:=s^i_v$, and then we modify them according to the following procedure. If $a^1_v+a^2_v=r_v$ and $b^1_v+b^2_v=s_v$, then $\on{sign}(q_v)=\on{sign}((q_1\perp q_2)_v)$ and we stop. Suppose that $a^1_v+a^2_v>r_v$ (if $a^1_v+a^2_v>r_v$, then $b^1_v+b^2_v>s_v$, and the procedure is entirely analogous). Since $a^1_v+a^2_v\equiv r_v\pmod 2$, we have $a^1_v+a^2_v-2\geq r_v$. In particular, either (i) $a^1_v\geq 2$, (ii) $a^1_v\geq 2$ or (iii) $a^1_v=a^2_v=1$. In case (i) we modify $(a^1_v,b^1_v)$ into $(a^1_v-2,b^1_v+2)$, in case (ii) we modify $(a^2_v,b^2_v)$ into $(a^2_v-2,b^2_v+2)$ and in case (iii) we modify $(a^1_v,b^1_v)$ into $(a^1_v-1,b^1_v+1)$ and $(a^2_v,b^2_v)$ into $(a^2_v-1,b^2_v+1)$, respectively. This process eventually stops, because the difference $a^1_v+a^2_v-r_v$ is non-negative, even, and decreases by $2$ at each step. At each step of the process the sums $a^i+b^i_v$ remain invariant, hence $a^i_v+b^i_v=r^i_v+s^i_v=n_i-\rho^i_v$. At the end of the process we have $a^1_v+a^2_v=r_v$, hence $b^1_v+b^2_v=s_v$ also holds.

 For every place $v$ and $i=1,2$, consider the $2n_i$-dimensional quadratic form $p_i^v$ defined as follows: if $v$ is finite then $p_i^v:=(q_i)_v$, and if $v$ is real then $p_i^v$ has signature $2(a_v^i,b^v_i)+(\rho_v^i,\rho_v^i)$. By \cite[Theorem 6.10]{scharlau1985quadratic}, there exist quadratic forms $p_1$ and $p_2$ such that $(p_i)_v=p_i^v$ for every place $v$. The torus $(E_i,p_i)$ is realizable everywhere locally: if $v$ is finite then this is because $((E_i)_v,(q_i)_v)$ is realizable, and if $v$ is real it follows from \cite[Proposition 2.3.1]{bayer2014embeddings}. By induction, we deduce that $(E_i,p_i)$ is realizable over $k$. If $v$ is a finite place, all invariants of $p_i$ and $q_i$ agree, hence $q_v\cong (q_1\perp q_2)_v\cong (p_1\perp p_2)_v$. If $v$ is a real place, then $q_v\cong(p_1\perp p_2)_v$ because they have the same signature. By the weak Hasse-Minkowski principle \cite[Chapter VI, 3.3]{lam2005introduction}, it follows that $q\cong p_1\perp p_2$. Since $(E_i, p_i)$ are realizable, $(E,\sigma)=(E_1,\sigma)\times (E_2,\sigma)$, and $q\cong p_1\perp p_2$, we conclude that $(E,q)$ is realizable. 
\end{proof}

\section{Proofs of \Cref{mainthm} and \Cref{classification}}

\begin{proof}[Proof of \Cref{mainthm}]
Let $k$ be the a global field, let $J$ be an Albert algebra over $k$, let $G:=\on{Aut}(J)$ and let $\alpha=(L,E,\sigma,\beta)$ be a datum. It is clear that if $(G,\alpha)$ is realizable over $k$, then $(G_v,\alpha_v)$ is realizable over $k_v$ for every place $v$ of $k$.

Assume that $(G_v,\alpha_v)$ is realizable for every $v\in\Sigma_k$. By definition, for every $v\in\Sigma_k$ there exists a maximal torus $T_v$ of $G_v$ of type $\alpha_v$. Let $\Gamma_v:=(L_v,M_v,q_v,\cdot_v^{*2})$ be the twisted composition associated to $T_v$ by \Cref{constr}. Note that $L_v=L\otimes_kk_v$ for every $v\in\Sigma_k$, but $M_v,q_v,\cdot^{*2}_v$ are not assumed to be defined over $k$.
 
By \Cref{obvious},  $(E_v,q_v)$ is realizable for every $v\in\Sigma_k$. By \Cref{toriorthogonal} and \Cref{disc}\ref{disc1}, we have $\on{disc}(q_v)=\on{disc}_{k_v}(L_v)$ in $L_v^{\times}/L_v^{\times 2}$. Since $L$ is defined over $k$, we deduce that the collection $(\on{disc}(q_v))$ comes from a global discriminant $\delta=\on{disc}_k(L)$. By \Cref{disc}\ref{disc2}, the Clifford invariant of $\on{O}(M_v,q_v)$ is trivial for every $v$. By (\ref{hasse-clifford}) we see that $w(q_v)=(-1,\on{disc}(q_v))=(-1,\delta)$. In particular, the number of places $v$ such that $w(q_v)\neq 0$ is finite and of even cardinality, by the Hilbert reciprocity law. By \cite[Theorem 6.10]{scharlau1985quadratic}, there exists a quadratic form $q$ over $L$ such that $q_v$ is the completion of $q$ at $v$ for every place $v$. It follows that the triples $(L_v,M_v,q_v)$ all arise as base change of a single $(L,M,q)$. By \Cref{twistedcomp-locglob}, there exists a unique twisted composition $\Gamma=(L,M,q,\cdot^{*2})$ over $k$ such that $\Gamma\times_kk_v\cong \Gamma_v$ for every $v\in\Sigma_k$. 
 
 We have $J_v\cong J(L_v,M_v)$ for every $v\in\Sigma_k$, hence by \Cref{localglobalHG} we deduce that $J\cong J(L,M)$. It follows from \Cref{springerconstr} that $H:=\on{Aut}(\Gamma)^{\circ}$ embeds in $G$. 
 
 By construction, for every $v\in\Sigma_k$ the pair $(E_v,q_v)$ is realizable: there is a unique $\tau_q$-stable $E'_v\c \on{Aut}(L_v,M_v)$ such that $(E'_v,\tau_{q_v})\cong (E_v,\sigma_v)$ and $T_v=U(E'_v,\tau_{q_v})$. By \Cref{disc}\ref{disc2}, the $L$-space $(M,q)$ has trivial Clifford invariant. By \Cref{trivialclifford}, this implies that $(E,q)$ is realizable over $k$: there exists $E'\c \on{Aut}_L(M)$ that is $\tau_q$-stable and such that $(E',\tau_q)\cong (E,\sigma)$. Let $\Lambda=(\on{End}_L(M),L,\tau_q,\eta)$ be the trialitarian algebra given by applying \cite[Lemma 4.35]{fiori2019rational} to $D=\on{End}_L(M)$ and $\tau=\tau_q$. Note that by \cite[Proposition 44.16]{knus1998involutions}, $\Lambda$ arises from a twisted composition $\Gamma'=(L,M,q,\cdot'^{*2})$.
 
 By \cite[Lemma 4.35]{fiori2019rational}, there exists a maximal torus of $\on{Spin}(\Lambda)$ such that the unique maximal $L$-torus of $R_{L/k}(\on{O}(\on{End}_L(M),\tau_q))$ containing the image of $U$ under the the trialitarian embedding is of type $(E,\sigma)$. Let $\beta':(E,\sigma)\otimes L \to (E\times E^{\Phi},\sigma\times\sigma)$ be the induced isomorphism. 
 
 Since $k$ is a global field, by \cite[Corollary 3.30, Lemma 4.35]{fiori2019rational}, $\on{End}(\Gamma)\cong\on{End}(\Gamma')$, that is, $\Lambda$ is isomorphic to the trialitarian algebra $\on{End}(\Gamma)$ associated to $\Gamma$. By \cite[Lemma 4.32]{fiori2019rational}, $\beta'$ is determined by $\Gamma$ and $(E,\sigma)$, hence $\beta=\beta'$. By \Cref{compatible}, we conclude that there exists a maximal torus $U$ of $\on{Aut}(\Gamma)^{\circ}$ such that the unique maximal $L$-torus of $R_{L/k}(\on{O}(M,q))$ containing the image of $U$ under the the trialitarian embedding is of type $(E,\sigma)$, and such that the induced isomorphism $(E,\sigma)\otimes L \to (E\times E^{\Phi},\sigma\times\sigma)$ is $\beta$. By definition, this means that $G$ contains a maximal torus of type $\alpha$, as desired.
\end{proof}

\begin{proof}[Proof of \Cref{classification}]
If $(G,\alpha)$ is realizable, then $(G_v,\alpha_v)$ is realizable for every $v\in\Sigma_k$. The condition on real places is then satisfied by \Cref{realcond}. 

Assume that the conditions at real places are satisfied. By \Cref{mainthm}, it suffices to show that $(G_v,\alpha_v)$ is realizable for every $v\in \Sigma_k$. If $v$ is a finite place, $(G_v,\alpha_v)$ is realizable by \Cref{nonarchimedean}. If $v$ is a real place, $(G_v,\alpha_v)$ is realizable by \Cref{realcond}.
\end{proof}

\section{Conjugacy classes of maximal tori}\label{sec:conjclasses}

We conclude this work with a study of the conjugacy classes of maximal tori in groups of type $F_4$ over $k$. This is not used anywhere else in the paper.

\begin{prop}\label{conjugation}
Let $k$ be a field of characteristic different from $2$ and $3$, let $G$ be a $k$-group of type $F_4$, let $T_1$, $T_2$ be two maximal $k$-tori in $G$, and let $H_i$ be the unique subgroup of type $D_4$ containing $T_i$ given by \Cref{constr}, for $i=1,2$. Then $T_1$ and $T_2$ are conjugate in $G$ if and only if there exists an isomorphism $\phi:H_1\to H_2$ sending $T_1$ to $T_2$.
\end{prop}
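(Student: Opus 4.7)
The ``only if'' direction is immediate: if $g \in G(k)$ satisfies $g T_1 g^{-1} = T_2$, then $g H_1 g^{-1}$ is a subgroup of $G$ of type $D_4$ containing $T_2$, so $g H_1 g^{-1} = H_2$ by the uniqueness assertion of \Cref{constr}\ref{constr3}; the restriction of $\on{conj}(g)$ to $H_1$ provides the required isomorphism.

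For the ``if'' direction, my plan is to extend the given $\phi$ to a $k$-automorphism of $G$. Since $G$ is of type $F_4$, its center and outer automorphism group are both trivial, so $\on{Aut}(G) = G$; every $k$-automorphism of $G$ has the form $\on{conj}(g)$ for a unique $g \in G(k)$, and such a $g$ then satisfies $g T_1 g^{-1} = \phi(T_1) = T_2$. Over $\bar{k}$, the extension is straightforward: picking $g_0 \in G(\bar{k})$ with $g_0 T_1 g_0^{-1} = T_2$, uniqueness forces $g_0 H_1 g_0^{-1} = H_2$, and the automorphism $\psi := \phi^{-1} \circ \on{conj}(g_0)|_{H_1}$ of $H_{1,\bar{k}}$ preserves $T_{1,\bar{k}}$ setwise. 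Thanks to the inclusion $\on{Spin}_8 \rtimes S_3 \subset G_s$ recalled in \Cref{constructionH}, the map $N_G(H_1) \to \on{Aut}(H_1)$ is surjective on $\bar{k}$-points with kernel $Z_G(H_1) = Z(H_1)$. Lifting $\psi$ to an element $n \in (N_G(H_1) \cap N_G(T_1))(\bar{k})$ and replacing $g_0$ by $g_0 n^{-1}$, one arranges $\on{conj}(g_0)|_{H_1} = \phi$.

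The main difficulty is descending $g_0$ from $\bar{k}$ to $k$: the set $E = \{g \in G : \on{conj}(g)|_{H_1} = \phi\}$ is a $k$-subscheme of $G$ which is a $Z_G(H_1) = Z(H_1)$-torsor, non-empty over $\bar{k}$, and one needs its class in $H^1(k, Z(H_1))$ to vanish. To handle this I would appeal to the finer Albert-algebraic structure. Under the Springer decomposition $J = L_i \oplus M_i$ of \Cref{constr}, $L_i$ is intrinsically the $T_i$-fixed subspace of $J$, and the twisted composition structure on $\Gamma_i = (L_i, M_i, q_i, \cdot^{*2})$ is inherited from the Albert-algebra structure on $J$; the condition $\phi(T_1) = T_2$ then forces $\phi$ to induce an isomorphism of twisted compositions $\Gamma_1 \to \Gamma_2$ over $k$. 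By functoriality of the Springer construction (see \cite[Corollary~38.7]{knus1998involutions}), such an isomorphism extends uniquely to an isomorphism of Albert algebras $J(L_1, M_1) \to J(L_2, M_2)$, and both being equal to $J$ this produces the desired $g \in \on{Aut}(J)(k) = G(k)$.

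The principal obstacle is justifying that $\phi$, viewed merely as an isomorphism of algebraic $k$-groups, genuinely induces an isomorphism of twisted compositions over $k$. I would formalize this through the identification $H_i \cong \on{Spin}(\on{End}(\Gamma_i))$ of \Cref{compatible} together with the Knus--Merkurjev--Rost--Tignol correspondence between trialitarian algebras and twisted compositions, noting that the cubic \'etale algebra $L_i$ is recoverable from $T_i$ as its fixed subspace in $J$, so the pair $(H_i, T_i)$ carries enough data to reconstruct $\Gamma_i$ up to canonical isomorphism.
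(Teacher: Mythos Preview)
Your approach and the paper's coincide. The paper's proof of the ``if'' direction is exactly the Albert-algebraic argument of your third paragraph, without the torsor preamble: write $H_i = \on{Aut}(\Gamma_i)^\circ$ for the twisted compositions $\Gamma_i$ furnished by \Cref{constr}, invoke \cite[Theorem~44.8]{knus1998involutions} to convert the group isomorphism $\phi$ directly into an isomorphism $f:\Gamma_1\xrightarrow{\sim}\Gamma_2$, extend $f$ to an automorphism of the Albert algebra $J$ via \cite[Proof of Corollary~38.7]{knus1998involutions}, and conclude using that every automorphism of a group of type $F_4$ is inner (\cite[Theorem~25.16]{knus1998involutions}).

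Two remarks on your exposition. First, the entire second paragraph (working over $\bar{k}$, lifting $\psi$ through $N_G(H_1)$, and the $Z(H_1)$-torsor obstruction) is a detour you yourself abandon; it can be deleted. Second, the step you flag as the ``principal obstacle''---producing an isomorphism of twisted compositions from the bare group isomorphism $\phi$---is dispatched in the paper by the single citation of \cite[Theorem~44.8]{knus1998involutions}, which gives the equivalence of groupoids between trialitarian structures and simply connected groups of type $D_4$. In particular the paper does \emph{not} use the hypothesis $\phi(T_1)=T_2$ to manufacture $f$; that hypothesis enters only at the very end, once the extension of $\phi$ to $G$ is known to be conjugation by some $g\in G(k)$, to conclude $gT_1g^{-1}=\phi(T_1)=T_2$. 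Your proposed workaround---recovering $L_i$ from $T_i$ as its fixed subspace in $J$ and reconstructing $\Gamma_i$ from the pair $(H_i,T_i)$---is therefore not needed.
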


\begin{proof}
If there is $g\in G(k)$ such that $gT_1g^{-1}=T_2$, then by \Cref{constr}\ref{constr3} we have $gH_1g^{-1}=H_2$, hence the automorphism of $G$ given by conjugation by $g$ sends $H_1$ to $H_2$ and $T_1$ to $T_2$.

Assume that there exists an isomorphism $\phi:H_1\to H_2$ sending $T_1$ to $T_2$. We claim that $\phi$ extends to an automorphism of $G$. By \Cref{constr}\ref{constr3}, we may write $H_i=\on{Aut}(\Gamma_i)^{\circ}$ for some twisted compositions $\Gamma_i=(L_i,M_i,q_i,\cdot^{*2}_i)$ and $i=1,2$. By \cite[Theorem 44.8]{knus1998involutions}, $\phi$ induces an isomorphism $f:\Gamma_1\xrightarrow{\sim} \Gamma_2$. If we let $J$ be the Albert algebra such that $G=\on{Aut}(J)$, we have $J(L_1,M_1)\cong J(L_2,M_2)\cong J$. By \cite[Proof of Corollary 38.7]{knus1998involutions}, $f$ extends to an automorphism of $J$, which by  \cite[Theorem 44.8]{knus1998involutions} induces an automorphism of $G$ extending $\phi$, as claimed. By \cite[Theorem 25.16]{knus1998involutions}, every automorphism of $F_4$ is inner, hence $T_1$ and $T_2$ are conjugate.
\end{proof}

Let $k$ be a global field of characteristic different from $2$ and $3$, let $J$ be an Albert algebra over $k$, let $G:=\on{Aut}(J)$ and let $T_1$ and $T_2$ be maximal tori of $G$. For $i=1,2$, let $\alpha_i=(L_i,E_i,\tau_{q_i},\beta_i)$ be the associated datum of $T_i$, and let $\Gamma_i=(L_i,M_i,q_i,\cdot_i^{*2})$ be the twisted composition associated to $T_i$ in \Cref{constr}, so that we have the Springer decompositions $J=L_i\perp M_i$. Combining \Cref{conjugation} with \cite[Corollary 4.63]{fiori2019rational}, we conclude that $T_1$ and $T_2$ are conjugate in $G$ if and only if:
\begin{enumerate}[label=(\alph*)]
    \item there exists an isomorphism $\phi:\Gamma_1\xrightarrow{\sim}\Gamma_2$;
    \item the $L$-torus $\phi_1(U(E_1,\tau_{q_1}))$ is conjugate to $U(E_2,\tau_{q_2})$ in $\on{SO}(M_2,q_2)$, where $\phi_1:\on{SO}(M_1,q_1)\to \on{SO}(M_2,q_2)$ is the isomorphism induced by $\phi$;
    \item the $L$-torus $\phi_2(U(E_1^{\Phi_1},\tau_{q_1}))$ is conjugate to $U(E_2^{\Phi_2},\tau_{q_2})$, where we denote by $\phi_2:\on{Spin}(M_1,q_1)\to \on{Spin}(M_2,q_2)$ the isomorphism induced by $\phi$.
\end{enumerate}
Condition (b) is characterized in terms of $(L_i,E_i,\tau_{q_i})$ in \cite[Theorem 4.14]{fiori2019rational}, and condition (c) is characterized in \cite[Theorem 4.29]{fiori2019rational}. Note that under the projections $\on{Spin}(M_i,q_i)\to \on{SO}(M_i,q_i)$, the $L_i$-tori $U(E_i^{\Phi_i},\tau_{q_i})$ map onto $U(E_i,\tau_{q_i})$; see \cite[Theorem 4.24]{fiori2019rational}.

\section{Acknowledgements}
The first author would like to thank Andrei Rapinchuk for the initial encouragement to pursue this problem years ago.

The second author thanks Eva Bayer-Fluckiger for useful correspondence, and Julia Gordon for helpful comments and encouragement.

\end{document}